\documentclass{amsart}

\title{Homeomorphic subsurfaces and the omnipresent arcs}
\author{Federica Fanoni}
\address[Federica Fanoni]{CNRS, Univ Paris Est Creteil, Univ Gustave Eiffel, LAMA, F-94010 Creteil, France}
\email{federica.fanoni@gmail.com}
\author{Tyrone Ghaswala}
\address[Tyrone Ghaswala]{D\'epartment de math\'ematiques, Universit\'e du Qu\'ebec \`a Montr\'eal, Montr\'eal, Canada}
\email{ty.ghaswala@gmail.com}
\author{Alan \mc Leay}
\address[Alan \mc Leay]{Mathematics Research Unit, Universit\'e du Luxembourg, 4365 Esch-sur-Alzette, Luxembourg}
\email{mcleay.math@gmail.com}
\date{\today}

\usepackage{amsmath} 
\usepackage{amssymb} 
\usepackage{amsthm} 
\usepackage{amsfonts} 
\usepackage[usenames,dvipsnames,svgnames]{xcolor} 
\usepackage{hyperref} 
\hypersetup{colorlinks=true,allcolors=blue!75!black} 
\usepackage{mathrsfs} 
\usepackage{enumitem}
\usepackage{overpic} 
\usepackage[margin=1.2in]{geometry}
\usepackage[normalem]{ulem}

\newcommand{\NN}{\mathbb{N}}
\newcommand{\RR}{\mathbb{R}}
\newcommand{\ZZ}{\mathbb{Z}}

\newcommand{\vA}{\mathcal{A}}
\newcommand{\vF}{\mathcal{F}}

\newcommand{\vU}{\mathcal{U}}
\newcommand{\vX}{\mathcal{X}}

\newcommand{\Ends}{\operatorname{Ends}}
\newcommand{\Endsg}{\Ends_g}
\newcommand{\Endsp}{\Ends_p}
\newcommand{\genus}{\operatorname{genus}}
\newcommand{\Homeo}{\operatorname{Homeo}}
\newcommand{\MCG}{\operatorname{MCG}}
\newcommand{\id}{\text{id}}

\newcommand{\sm}{\setminus}
\newcommand{\st}{\;|\;}

\DeclareMathOperator{\sqleq}{\preccurlyeq} 
\DeclareMathOperator{\sqgeq}{\succcurlyeq} 

\newcommand{\abs}[1]{\left\lvert #1 \right\rvert}

\definecolor{lightgrey}{gray}{.85}

\theoremstyle{plain}
\newtheorem{thm}{Theorem}[section]
\newtheorem{lemma}[thm]{Lemma}
\newtheorem{cor}[thm]{Corollary}
\newtheorem{prop}[thm]{Proposition}
\newtheorem*{quest}{Question}

\theoremstyle{definition}
\newtheorem{defn}[thm]{Definition}
\newtheorem{rmk}[thm]{Remark}
\newtheorem{eg}[thm]{Example}
\theoremstyle{plain}
\newtheorem{thmintro}{Theorem}



\makeatletter
\newcommand{\mc}{}
\DeclareRobustCommand{\mc}{%
  M%
  \raisebox{\dimexpr\fontcharht\font`M-\height}{%
    \check@mathfonts\fontsize{\sf@size}{0}\selectfont
    c%
  }%
}

\makeatletter
\def\paragraph{\@startsection{paragraph}{4}%
  \z@\z@{-\fontdimen2\font}%
  {\normalfont\bfseries}}
\makeatother

\begin{document}

\begin{abstract}
In this article, we are concerned with various aspects of arcs on surfaces. In the first part, we deal with topological aspects of arcs and their complements. We use this understanding, in the second part, to construct an interesting action of the mapping class group on a subgraph of the arc graph. This subgraph naturally emerges from a new characterisation of infinite-type surfaces in terms of homeomorphic subsurfaces.
\end{abstract}

\maketitle

\section{Introduction}
Our work stems from the following simple observation: no surface $\Sigma$ of finite-type admits a subsurface $S$ homeomorphic to $\Sigma$, but not homotopic to it\footnote{I.e.\ such that the inclusion map is not homotopic to a homeomorphism.}. This holds for a quite general type of subsurface -- the boundary of $S$ in $\Sigma$ can be any union of (pairwise disjoint and essential) simple arcs and curves on $\Sigma$. However, the proof of this fact relies on the finiteness of genus and number of punctures of a finite-type surface, so it does not carry over to the infinite-type setting (i.e. where the fundamental group is not finitely generated). As such, a natural question is: given an infinite-type surface $\Sigma$, does there exist a subsurface homeomorphic to $\Sigma$ such that the inclusion map is not homotopic to a homeomorphism? Our first result states that not only are infinite-type surfaces characterised by having such a subsurface, but also that we can restrict our attention to a simple class of subsurfaces.

\begin{thmintro}\label{thmintro:one-cut-homeo}
A surface $\Sigma$ is of infinite-type if and only if there exists a separating essential proper simple arc $\alpha$ such that one component of $\Sigma\sm\alpha$ is homeomorphic to $\Sigma$.
\end{thmintro}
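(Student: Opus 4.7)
For the easy direction $(\Leftarrow)$, take $\Sigma$ of finite type with genus $g$ and $n$ ends. Any separating essential proper simple arc $\alpha$ cuts $\Sigma$ into two finite-type components $\Sigma_1, \Sigma_2$ with $g_1 + g_2 = g$ and $n_1 + n_2 \in \{n + 1, n + 2\}$, depending on whether the endpoints of $\alpha$ coincide. Essentiality rules out disks or once-punctured disks as components, forcing $\chi(\Sigma_i) > \chi(\Sigma)$ for both $i$, so neither $\Sigma_i$ can be homeomorphic to $\Sigma$.

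For the hard direction $(\Rightarrow)$, the plan is to use the Ker\'ekj\'art\'o--Richards classification, which records a surface up to homeomorphism by its genus $g \in \{0, 1, \dots, \infty\}$, end space $\Ends(\Sigma)$, and non-planar end subspace $\Endsg(\Sigma)$. Given an infinite-type $\Sigma$, I construct a separating proper simple arc $\alpha$ whose large component shares these invariants, splitting into two cases. If $\genus(\Sigma) = \infty$, then $\Endsg(\Sigma) \neq \emptyset$, so I pick an end $e \in \Endsg(\Sigma)$ and enclose a single handle near $e$ with an arc based at $e$; the large component retains infinite genus, the same end space with a new ``arc end'' topologically replacing $e$, and the same non-planar end structure, so it is homeomorphic to $\Sigma$. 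If instead $\genus(\Sigma) < \infty$ and $|\Ends(\Sigma)| = \infty$, I represent $\Sigma$ as $\Sigma_g \sm E$ for a closed surface $\Sigma_g$ of genus $g$ and a closed totally disconnected set $E \subseteq \Sigma_g$, pick an end $e \in E$, and construct a simple loop $\bar{\alpha}$ in $\Sigma_g$ based at $e$ bounding a disk $D$ whose interior captures a carefully chosen clopen subset $A \subseteq E \sm \{e\}$; the large component then has genus $g$, empty non-planar end subspace, and end space $(E \sm (A \cup \{e\})) \cup \{\tilde{e}\}$ with $\tilde{e}$ inheriting the local topology of $e$.

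The main obstacle lies in this second case, particularly when $e$ is not isolated in $E$: every small disk around $e$ contains infinitely many ends, so the clopen set $A$ has to be chosen carefully so that the end space of the large component remains homeomorphic to $E$. This reduces to the claim that for any infinite compact Hausdorff totally disconnected space $E$ one can find a point $e$ and a clopen $A \subseteq E \sm \{e\}$ so that $(E \sm (A \cup \{e\})) \cup \{\tilde{e}\} \cong E$, where $\tilde{e}$ replaces $e$ and inherits its limit structure from the ends that accumulate at $e$ from outside $D$. Verifying this end-space bookkeeping -- and realising the resulting arc as an embedded simple loop in $\Sigma_g$ whose two sides have the predicted topological type -- is the technical core of the argument; in examples such as the Cantor tree or surfaces whose end space has several distinct Cantor--Bendixson ranks, choosing $e$ and $A$ requires a little care.
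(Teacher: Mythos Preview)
Your easy direction and your infinite-genus case are both fine. The latter is essentially the paper's Lemma~\ref{lem:one-cut-homeo}, case~1, but you drop the (unnecessary) hypothesis that the nonplanar end be isolated: Lemma~\ref{lem:separating_arc} identifies the end space of the large side with all of $\Ends(\Sigma)$, and since you have only removed one handle near a nonplanar end $e$, every flare neighbourhood of $e$ in the large side still has infinite genus, so $(\Ends,\Endsg)$ is preserved.

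The genuine gap is the finite-genus case. You correctly reduce to a purely end-space statement---for every infinite closed $E$ in the Cantor set there exist $e\in E$ and a nonempty $A\subseteq E\sm\{e\}$, clopen in $E\sm\{e\}$, with $E\sm A\cong E$ (this is what Lemma~\ref{lem:separating_arc} gives for the end space of the large side)---and you correctly flag the geometric realisation of the arc as a second issue. But you prove neither; you explicitly call this ``the technical core'' and stop. The end-space statement is not a formality: for a generic $e$ and $A$ it fails, and when $E$ has points of many different Cantor--Bendixson ranks the right choice depends on the global structure of $E$. As written, this is a plan, not a proof.

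The paper avoids wrestling with a general $E$ by a short structural dichotomy. Either $\Ends(\Sigma)$ has infinitely many isolated points, in which case there is an isolated nonplanar end (case~1) or infinitely many punctures, all in a single orbit, one of which can be lassoed off (case~2); or $\Ends(\Sigma)$ is a Cantor set union a finite set, and the Gruenhage--Schoenfeld theorem produces a clopen Cantor subset consisting entirely of planar or entirely of nonplanar ends, half of which can be lassoed off (case~3). In every case the set $A$ being removed is clopen in $\Ends(\Sigma)$ itself, so the arc is an explicit lasso and both your missing steps become trivial. If you want to push your approach through, you will need an argument of comparable strength for the end-space claim; the paper's case split is probably the shortest way to supply it.
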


We call a subsurface as in the theorem a \emph{one-cut homeomorphic subsurface}. The existence of such subsurfaces provides us with a way to select a special class of arcs, which we want to think of as ``truly essential''.  Indeed, for a finite-type surface with at least one end, the essential arcs are those intersecting every homeomorphic subsurface (although in this case the inclusion of the subsurface in the full surface is homotopic to a homeomorphism). Following Theorem \ref{thmintro:one-cut-homeo}, we extend this viewpoint to the infinite-type setting; we say that an arc joining distinct ends is \emph{omnipresent} if it intersects \emph{every} one-cut homeomorphic subsurface.

One might hope to be able to characterise omnipresent arcs on any surface, but, as discussed in Section \ref{sec:stability}, this seems to be impossible in general. On the other hand, we are able to give such a characterisation for a subclass of surfaces, which we call \emph{stable}. The ends of these surfaces are required to satisfy a \emph{stability} condition (see Section \ref{sec:stability}), which turns out to be the same as the one considered by Mann and Rafi in their recent work on the coarse geometry of mapping class groups \cite{MR_Large}. The set of stable surfaces is a large subset of the set of all surfaces -- in particular, it has the cardinality of the continuum (Remark \ref{rmk:alot-of-stables}).

As the next theorem shows, ends with finite mapping class group orbit play an important role in the study of stable surfaces; we call such ends \emph{finite-orbit ends}.

\begin{thmintro}\label{thmintro:omnipresent&fends}
Let $\Sigma$ be a stable surface. Then an arc joining two distinct ends is omnipresent if and only if both its ends are finite-orbit ends.
\end{thmintro}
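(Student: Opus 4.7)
The plan is to prove each direction of the biconditional separately, using rather different techniques.

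For the ``if'' direction (both ends finite-orbit implies $\alpha$ omnipresent), I would argue by contradiction and end-counting. Assume $e_1, e_2$ are finite-orbit ends with $\MCG(\Sigma)$-orbit sizes $m_1, m_2$, and suppose some one-cut homeomorphic subsurface $S$, cut off by a separating proper simple arc $\beta$ with endpoints at ends $f_1, f_2$, satisfies $\alpha \cap S = \emptyset$. Then $e_1, e_2$ lie on the complementary side $S'$. Because $S \cong \Sigma$, the end space $E(S)$ is homeomorphic to $E(\Sigma)$ preserving enough structure (including genus and orientability accumulation) to read off $\MCG$-orbits, so for each finite-orbit $\MCG$-orbit type $[e]$, the number of type-$[e]$ ends in $S$ equals that in $\Sigma$. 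The ends of $S$ split into (a) the ends of $\Sigma$ lying on the $S$-side, and (b) the at most two ``new'' ends $f_1^S, f_2^S$ created by cutting along $\beta$. The key lemma is that, for a stable end $f_j$ of $\Sigma$, the new end $f_j^S$ has the same $\MCG$-type $[f_j]$ as $f_j$. Granted this lemma, equating type-$[e_1]$ counts in $S$ and $\Sigma$ forces no type-$[e_1]$ end of $\Sigma$ to lie in the interior of the $S'$-side, contradicting the presence of $e_1$ there.

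For the ``only if'' direction (omnipresent implies both ends finite-orbit), I would prove the contrapositive by construction. Suppose $e_1$ has infinite $\MCG(\Sigma)$-orbit. Using stability and the infiniteness of the orbit, I find an end $e'$ of the same local type as $e_1$ sitting in an open region $U \subset \Sigma$ that is disjoint from $\alpha$. Applying the construction behind Theorem~\ref{thmintro:one-cut-homeo} inside $U$, together with a swallowing argument arranging for all of $\Sigma$'s essential topology to be absorbed into the component containing $e'$, I produce a separating proper simple arc $\beta \subset U$ whose cut-off component $S$ is homeomorphic to $\Sigma$ and contained in $U$. Then $\alpha \cap S = \emptyset$, so $\alpha$ is not omnipresent.

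The main obstacle I expect is proving the key lemma in the ``if'' direction: rigorously establishing that cutting along an arc approaching a stable end $f$ produces a new end of the same $\MCG$-type $[f]$. This demands an explicit understanding of local end-types in the Mann--Rafi stability framework and of how they behave under ``half-neighborhood'' operations, likely stratified by whether the ends are isolated, accumulated by other ends of various types, or accumulated by genus. In addition, the edge case in which $\beta$'s endpoints coincide with $e_1, e_2$ needs separate treatment; I would expect to rule it out by arguing that in a stable surface any separating arc between two distinct finite-orbit ends must either be isotopic to $\alpha$ (so the one-cut subsurface would force $\alpha$ itself to be a separating arc with a side homeomorphic to $\Sigma$, a situation that can be excluded by further analysis) or bound a bigon whose contents are incompatible with $S \cong \Sigma$.
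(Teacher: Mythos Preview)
Your proposal has genuine gaps in both directions.

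First, a definitional issue that affects the ``if'' direction: a one-cut subsurface is, by definition, a complementary component of a separating \emph{loop}, i.e.\ an arc whose two endpoints coincide at a single end $b$. A $2$-ended arc is always nonseparating, so your $f_1,f_2$ must be equal and there is exactly one new end $b^S$ in $S$, not ``at most two''. More importantly, your ``key lemma'' that $b^S$ inherits the $\MCG$-type of $b$ is both unproven and unnecessary. The paper's argument (which uses no stability hypothesis for this direction) sidesteps it entirely: setting $O=\MCG(\Sigma)\cdot e_1\cup\MCG(\Sigma)\cdot e_2$, one has $|O\cap C|\leq |O|-2$ (since $e_1,e_2\notin C$) but under a homeomorphism $\varphi:\Sigma\to S$ one has $|\varphi(O)\cap C|\geq |O|-1$ (since only $b^S$ lies outside $C$). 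Hence some $\varphi(x)$ with $x\in O$ lands in $C\setminus O$, and a direct application of the local orbit criterion (Lemma~\ref{lem:orbit-characterisation}) gives the contradiction. No analysis of the type of $b^S$ is required, and in particular the awkward edge case you flag dissolves.

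The ``only if'' direction is where your strategy breaks down. You propose to find another end $e'$ in the orbit of $e_1$, a region $U\supset e'$ disjoint from $\alpha$, and then a one-cut homeomorphic subsurface $S\subset U$. But $S\subset U$ forces $\Ends(S)$ to be (up to one extra point) contained in $U^*$, and if $\Sigma$ has any finite-orbit end outside $U^*$ --- for instance if the other endpoint $e_2$ of $\alpha$ is a finite-orbit end --- then $S$ cannot be homeomorphic to $\Sigma$. No ``swallowing argument'' can fix this: the topology you need to absorb simply is not in $U$. The paper does the opposite: it removes a \emph{small} dispensable neighbourhood $W$ of the infinite-orbit end $e_1$ itself (Lemma~\ref{lem:dispensible-stable-nhood} shows such a $W$ exists with $\Ends(\Sigma)\setminus W^*\simeq\Ends(\Sigma)$) and lassos $W$ \emph{along $\alpha$}. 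The resulting component $S$ avoiding $W$ is the large piece of $\Sigma$, is disjoint from $\alpha$ by construction, and is homeomorphic to $\Sigma$. The key technical input is thus not a local construction near a copy of $e_1$, but the fact that a stable infinite-orbit end admits arbitrarily small neighbourhoods whose removal leaves the end space unchanged.
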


Recall that the \emph{arc graph} $\vA(\Sigma)$ has vertices corresponding to isotopy classes of all essential arcs, and edges between vertices with disjoint representatives.  In analogy with the finite-type case, we consider the \emph{omnipresent arc graph} $\Omega(\Sigma)$, the full subgraph of $\vA(\Sigma)$ spanned by all omnipresent arcs, that is, the ``truly essential'' arcs.  The reason to look at subgraphs of the arc graph is the following. In the finite-type case, curve and arc graphs have been extremely useful to understand properties of mapping class groups -- think for instance of quasi-isometric rigidity \cite{BKMM_Geometry, Bowditch_Large-scale}, or cohomological properties \cite{Harer_Stability, Harer_Virtual, BF_Bounded}. While we can define these graphs for infinite-type surfaces as well, one issue for many applications is that they have finite diameter. As such, there has been interest in constructing other graphs associated to infinite-type surfaces with a good mapping class group action to circumvent this problem \cite{FP_Curve, Bavard_Hyperbolicite, AFP_Arc, DFV_Big} and the definition of the omnipresent arc graph fits in this framework. We show:

\begin{thmintro}\label{thmintro:omnipresent-hyperbolic}
For any stable surface $\Sigma$ with at least three finite-orbit ends, $\Omega(\Sigma)$ is a connected $\delta$-hyperbolic graph on which $\MCG(\Sigma)$ acts with unbounded orbits. The constant $\delta$ can be chosen independently of $\Sigma$.
\end{thmintro}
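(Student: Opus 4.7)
The plan is to prove the three conclusions—connectivity, uniform hyperbolicity, and unbounded orbits—by leveraging Theorem B to reduce to well-understood facts about arc graphs of finite-type surfaces. Theorem B identifies the vertex set of $\Omega(\Sigma)$ with isotopy classes of arcs both of whose endpoints are finite-orbit ends; this is the geometric "rigidity" that makes all subsequent surgery arguments automatically stay inside $\Omega(\Sigma)$.

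Connectivity is the warm-up. Given two omnipresent arcs $\alpha, \beta$ with $i(\alpha,\beta)>0$, I would resolve an intersection point to obtain two new arcs. Each of these has endpoints drawn from the set of endpoints of $\alpha$ and $\beta$, hence by Theorem B each is again omnipresent. Iterating reduces to the disjoint case and produces an edge-path in $\Omega(\Sigma)$.

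For uniform hyperbolicity, the strategy is to run the unicorn-path argument of Hensel--Przytycki--Webb verbatim. Given omnipresent arcs $\alpha, \beta$ with disjoint endpoints and a chosen intersection point $p$, the unicorn arc $[\alpha,\beta]_p$ is formed by concatenating the initial sub-arc of $\alpha$ up to $p$ with the terminal sub-arc of $\beta$ from $p$ onward. Its endpoints are one endpoint of $\alpha$ together with one of $\beta$, both finite-orbit ends, so by Theorem B the unicorn is again omnipresent. The Hensel--Przytycki--Webb thin-triangles estimate is proved by a local surgery calculation on arcs in a neighborhood of their intersection pattern, so nothing changes in the infinite-type setting, and the same explicit constant $\delta$ is inherited. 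This is the step I expect to be the main technical point: one has to verify that every intermediate arc appearing in the proof (unicorn paths, nearest-unicorn retractions, surgery descendants) has its endpoints among the original finite-orbit ends, so that omnipresence is never lost.

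For unbounded orbits, use the hypothesis of three finite-orbit ends $e_1, e_2, e_3$ to select a finite-type subsurface $S\subset\Sigma$ containing neighborhoods of all three. Let $\varphi$ be a partial pseudo-Anosov supported on $S$ (extended by the identity outside) whose action on $\vA(S)$ has positive translation length. For an omnipresent arc $\alpha$ contained in $S$ and joining, say, $e_1$ and $e_2$, the orbit $\{\varphi^n\cdot\alpha\}$ has linearly growing distance in $\vA(S)$. A projection/intersection-number comparison argument—using that geodesic witnesses in $\Omega(\Sigma)$ must cross $\alpha$ once the intersection number in $S$ becomes large—transfers this growth to $\Omega(\Sigma)$, yielding unbounded orbits. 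The third finite-orbit end $e_3$ is used here to guarantee that a witness arc, needed to detect the growth via surgery, exists inside $\Omega(\Sigma)$.

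The main obstacle is the hyperbolicity step, specifically checking that the unicorn machinery goes through without any new error terms from the non-compactness of $\Sigma$ or from the potentially very wild complement of a finite-type subsurface. Because Theorem B makes omnipresence stable under the surgeries used in the Hensel--Przytycki--Webb proof, the expected outcome is that this verification is mechanical but requires care, and that the resulting constant is indeed independent of $\Sigma$.
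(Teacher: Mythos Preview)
Your proposal has a genuine gap that undermines both the connectivity and hyperbolicity arguments: you implicitly assume that two omnipresent arcs intersect finitely often. On an infinite-type surface, a finite-orbit end need not be a puncture (indeed typically it is not), and two arcs limiting to a non-isolated or nonplanar end can intersect infinitely many times. In that situation your connectivity argument (``iterating reduces to the disjoint case'') never terminates, and the Hensel--Przytycki--Webb unicorn path, which is indexed by the finitely many intersection points of the two arcs, is not defined. The paper makes this explicit: it gives an example where the set $\vU_2(\alpha,\beta)$ of 2-ended unicorns spans a \emph{disconnected} subgraph, so one cannot run HPW ``verbatim''.

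The paper's fix is to replace the unicorn path by the full \emph{set} $\vU_2(\alpha,\beta)$ of 2-ended unicorns and then pass to its 1-neighbourhood $N_1(\vU_2(\alpha,\beta))$. Two ingredients make this work. First, a crawling lemma: from any unicorn one can always produce an adjacent unicorn whose corner has moved along $\alpha$ towards the end, yielding an infinite sequence of adjacent unicorns whose corners escape to an end $e$. Second, the hypothesis $|P|\geq 3$: once the corner is far enough out, the unicorn agrees with $\beta$ on a finite-type subsurface separating the ends of $P$, and a \emph{third} end in $P$ is used to produce an arc disjoint from both that unicorn and $\beta$. This is what the third finite-orbit end actually buys---not merely a witness for the unbounded-orbits step as you suggest, but the bridge that makes $N_1(\vU_2(\alpha,\beta))$ connected. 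Hyperbolicity is then obtained not from HPW directly but from the guessing-geodesics lemma applied to the family $A(\alpha,\beta)=N_1(\vU_2(\alpha,\beta))$; the slim-triangles verification is a short case analysis on endpoints. Your unbounded-orbits outline is close to the paper's; there it is made precise by exhibiting a quasi-isometric embedding $\vA_2(X,\partial_P X)\hookrightarrow \vA_2(\Sigma,P)$ for a finite-type $X$ with boundary separating $P$, and then invoking a pseudo-Anosov supported on $X$.
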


An interesting observation is that, in general, the action of the mapping class group is not continuous (Proposition \ref{prop:discontinuous-action}) and the set of omnipresent arcs is uncountable (Lemma \ref{lem:uncountable}).

We remark that the omnipresent arc graph is naturally associated to the surface, in the sense that there are no (surface-dependent) choices to be made. This is in contrast with the construction in \cite{DFV_Big}, where the graphs depend on the choice of a collection of closed subsets of the space of ends of the surface. While we can define the omnipresent arc graph for every surface, it is in the setting of (most) stable surfaces that we can certify that the graph has the good properties stated in the theorems, thus giving a partial answer to the following question:

\begin{quest}\cite[Problem 2.1]{aim}
What combinatorial objects are ``good'' analogues of the curve complex, either uniformly for all infinite-type surfaces or for some class of infinite-type surfaces? Here ``good'' means that there exist relationships between topological properties of the mapping class and dynamical properties of its action on the combinatorial object.
\end{quest}

\subsection{Other definitions of graphs}
Our strategy to prove Theorem \ref{thmintro:omnipresent-hyperbolic} views omnipresent arc graphs as a special case of a more general construction, reminiscent of that of \cite{DFV_Big}. More precisely, given a surface $\Sigma$ and a subset $P$ of ends, we define $\vA_2(\Sigma,P)$ to be the subgraph of the arc graph spanned by arcs with two distinct ends in $P$. We show that for stable surfaces, the omnipresent arc graph corresponds to $\vA_2(\Sigma,\vF)$, where $\vF$ is the collection of finite-orbit ends.  We then show that a theorem analogous to Theorem \ref{thmintro:omnipresent-hyperbolic} holds for this graph if $P$ is finite, of cardinality at least three, and mapping class group invariant (Theorem \ref{thm:2-ended-arc-graph}).

In \cite{DFV_Big}, the authors introduce a topological invariant associated to a surface, the finite-invariance index $\mathfrak{f}(\Sigma)$, and show that if $\mathfrak{f}(\Sigma)\geq 4$, then there is a connected graph on which $\MCG(\Sigma)$ acts (continuously) with unbounded orbits. It is easy to check that the surfaces in Theorem \ref{thmintro:omnipresent-hyperbolic} satisfy $\mathfrak{f}(\Sigma)\geq |P|$. As we require $|P|$ to be only at least three, our result includes surfaces not covered by \cite[Theorem 2]{DFV_Big}. One such example is the surface with three ends, all nonplanar, for which it was shown in \cite{DFV_Big} that there is no graph, whose vertices are homotopy classes of curves, which is connected and on which the mapping class group acts with unbounded orbits. On the other hand, there are surfaces with $\mathfrak{f}(\Sigma)\geq 4$ which are not covered by our theorem, though for these it is not known whether the graph constructed in \cite{DFV_Big} is Gromov hyperbolic.

Generalising work of Bavard \cite{Bavard_Hyperbolicite}, Aramayona, Fossas and Parlier proved in \cite{AFP_Arc} a result analogous to Theorem \ref{thmintro:omnipresent-hyperbolic} for the subgraph of the arc graph given by arcs with both endpoints in a given finite set $P$ of isolated planar ends. While they don't need a lower bound on the size of $P$, it is important in their construction that the ends in $P$ are planar and isolated. As in our case the ends are not necessarily planar nor isolated, the behavior of a finite collection of arcs is not captured by their behavior in a finite-type surface. This in turn implies that the strategy to show hyperbolicity in \cite{AFP_Arc} and \cite{DFV_Big} -- reducing to the finite-type setting -- won't carry over to our setup. We need instead to understand how to adapt the unicorn construction of \cite{HPW_Slim} to pairs of arcs that intersect infinitely many times (Section \ref{sec:arcgraphs}).

\subsection{Plan of the paper}

In Section \ref{sec:arcs&complements} we discuss topological properties of arcs and their complements, proving Theorem \ref{thmintro:one-cut-homeo}. In Section \ref{sec:omnipresent} we define omnipresent arcs and show that if an arc joins two finite-orbit ends, then it is an omnipresent arc (one direction of Theorem \ref{thmintro:omnipresent&fends}). We then discuss in Section \ref{sec:stability} the stability conditions necessary to be able to characterise omnipresent arcs and prove the other direction of Theorem \ref{thmintro:omnipresent&fends}. In Section \ref{sec:arcgraphs} we define the arc graphs we are interested in, discuss the (dis)continuity of the mapping class group action and prove Theorem \ref{thmintro:omnipresent-hyperbolic}.
\subsection*{Acknowledgements}
This work started during the AIM workshop \emph{Surfaces of infinite-type}. We are grateful to the American Institute of Mathematics and the National Science Foundation for the hospitality and support. We are also thankful to Kathryn Mann and Kasra Rafi for sharing their manuscript of \cite{MR_Large}. We thank the referees for their useful comments and for pointing out a mistake in a previous version of the article.

The first author would like to thank Sebastian Hensel for useful discussions.  The second author would like to thank Ian Payne for useful discussions, and is grateful for support from a PIMS postdoctoral fellowship at the University of Manitoba and a CIRGET postdoctoral fellowship at the Universit\'e du Qu\'ebec \`a Montr\'eal. The third author was supported by the COALAS FNR AFR bilateral grant.

\section{Preliminaries}\label{sec:preliminaries}
We recall here some basic definitions, to establish notation, and we introduce objects and conventions we will use in the sequel.

\subsection{Surfaces and ends}
By \emph{surface} we mean a two-dimensional, connected, orientable manifold. Unless otherwise stated, surfaces will have no boundary, except in the case of properly embedded subsurfaces. A surface is of \emph{finite type} if its fundamental group is finitely generated and of \emph{infinite type} otherwise. A simple closed curve on a surface is \emph{essential} if it does not bound a disk or a once-punctured disk.

The \emph{mapping class group} $\MCG(\Sigma)$ of a surface $\Sigma$ is the group of orientation preserving homeomorphisms, up to isotopy. It is a topological group with respect to the topology generated by sets of the form $\phi\cdot U_A$, where $\phi$ is a mapping class, $A=\{\gamma_1,\dots,\gamma_n\}$ a finite collection of homotopy classes of essential simple closed curves and
$$U_A=\{\psi\in\MCG(\Sigma)\st \psi(\gamma_i)=\gamma_i \;\forall i=1,\dots,n\}.$$
This is the same as the quotient topology induced by the compact-open topology on the group of (orientation-preserving) homeomorphisms (a fact that can be shown using the Alexander method).

Given a surface $\Sigma$, we define its \emph{ends} to be equivalence classes of admissible descending chains. An \emph{admissible descending chain} is a nested sequence $U_1 \supset U_2 \supset \cdots$, where
\begin{enumerate}
\item each $U_n$ is either a connected open unbounded\footnote{A set is unbounded if its closure is not compact.}  set with compact boundary or the closure of such a set, and
\item for every compact set $K$ of $\Sigma$, $U_n \cap K = \emptyset$ for any $n$ large enough.
\end{enumerate}
Two such chains  $U_1 \supset U_2 \supset \cdots$ and $V_1 \supset V_2\supset \cdots$ are equivalent if for any $n$ there exists an $N$ such that $U_N \subset V_n$, and for any $m$ there exists an $M$ such that $V_M \subset U_m$.  We will often say that an admissible descending chain \emph{defines} the end $[U_1\supset U_2\supset\dots]$.

We denote by $\Ends(\Sigma)$ the set of ends of the surface, endowed with the topology with basis
$$\{U^* \st \mbox{int}(U) \mbox{  is an open set with compact boundary}\},$$
where
$$U^*:=\{ [ V_1 \supset V_2 \supset \cdots] \st \exists n : V_n \subset U\}.$$

An end $[ V_1 \supset V_2 \supset \cdots]$ is {\emph{planar}} if $V_n$ is planar (has genus zero) for large enough $n$. Otherwise, the end is \emph{nonplanar}. An end is a \emph{puncture} if it is planar and isolated. We denote by $\Endsp(\Sigma)$ and $\Endsg(\Sigma)$ the subspaces of $\Ends(\Sigma)$ given by planar and nonplanar ends respectively.

Adding $\Ends(\Sigma)$ to the surface yields a compactification of $\Sigma$, called \emph{Freudenthal compactification}. Moreover, the actions of $\Homeo(\Sigma)$ and $\MCG(\Sigma)$ extend to an action on the space of ends and on the Freudenthal compactification of $\Sigma$. We will regularly abuse notation and simply write $\phi(e)$ for the image of an end $e$ by a mapping class $\phi$ via the action just described. We remark that by neighbourhood of an end we will mean a neighbourhood in $\Ends(\Sigma)$ (and not in the Freudenthal compactification of $\Sigma$).

Surfaces are characterised, up to homeomorphism, by their genus and the pair of topological spaces $(\Ends(\Sigma),\Endsg(\Sigma))$ (see \cite{Richards_Classification}). Note that Richards' proof of the main theorem in \cite{Richards_Classification} also shows that given two surfaces $\Sigma$ and $\Sigma'$ with the same genus and a homeomorphism $$f:(\Ends(\Sigma),\Endsg(\Sigma))\to (\Ends(\Sigma'),\Endsg(\Sigma'))$$ of pairs of topological spaces, there is a homeomorphism $F:\Sigma\to\Sigma'$ inducing $f$ on the spaces of ends. In particular, this implies that the mapping class group orbit of an end of a surface $\Sigma$ coincides with its orbit under the action of $\Homeo(\Ends(\Sigma),\Ends_g(\Sigma))$.

Furthermore, for any surface $\Sigma$, $\Ends(\Sigma)$ is (homeomorphic to) a closed subset of the Cantor set and $\Endsg(\Sigma)$ is closed in $\Ends(\Sigma)$. Conversely, for any pair $(E,F)$ of topological spaces, where $E$ is a closed subset of the Cantor set and $F$ a closed subset of $E$, there is a surface $\Sigma$ with $(\Ends(\Sigma),\Endsg(\Sigma))\simeq (E,F)$. If $F\neq \emptyset$, the surface is unique (up to homeomorphism) and has infinite genus. Otherwise, there is one such surface for every finite genus. In particular, the set of homeomorphism classes of surfaces has the cardinality of the continuum.

\subsection{Arcs and subsurfaces}

By \emph{arc} we mean the image of a proper embedding $a:I\to\Sigma$, where $I$ is either $[0,1]$, $[0,1)$ or $(0,1)$, such that:
\begin{itemize}
\item if $x$ is an endpoint of $I$, $a(x)$ belongs to a boundary component of $\Sigma$,
\item the closure of $a(I)$ in the Freudenthal compactification of $\Sigma$ is not contractible. 
\end{itemize}
We will abuse notation and conflate an arc with its homotopy class (where homotopies are required to fix the boundary pointwise). Given the (homotopy class of an) arc $\alpha$, we define $\partial\alpha$ to be the set of ends and/or boundary components joined by $\alpha$. Given an arc $\alpha$ with $\partial\alpha\subset\Ends(\Sigma)$, if $\partial\alpha$ is a single end $e$ we say that $\alpha$ is a \emph{loop} (\emph{based at $e$}), while if $|\partial \alpha|=2$ we say that $\alpha$ is \emph{2-ended}.

Given an arc $\alpha$ and a finite union of simple closed curves $\beta$, the \emph{geometric intersection number} of $\alpha$ and $\beta$, denoted $i(\alpha,\beta)$, is the minimum number of intersections between representatives of the two homotopy classes.

We will be concerned with two types of subsurfaces: properly embedded and non-properly embedded ones. Both types will often be considered up to homotopy.

The non-properly embedded subsurfaces we will consider are complements of arcs and curves. Note that in the literature a subsurface obtained by cutting along an arc or a curve is usually a surface with boundary and hence it is properly embedded. Here instead, given an arc or a curve, we will simply consider the (open) subsurface(s) of $\Sigma$ given by its complementary component(s).

We require properly embedded subsurfaces to have homotopically nontrivial boundary components (but not necessarily essential).
Flare surfaces and subsurfaces in an exhaustion are the main properly embedded subsurfaces that will play a role in our work. A \emph{flare surface} is a properly embedded unbounded subsurface of $\Sigma$ whose boundary is a single separating simple closed curve. Note that this is a mild generalisation of the notion of flare surface used in \cite{FHV_Big}, where the closure of the complementary component of a flare surface is not allowed to be a finite-type surface with at most one planar end. With the definition we consider, we have the following result, which we will use throughout without explicit mention:

\begin{lemma}
Every proper clopen subset of $\Ends(\Sigma)$ can be realised as $X^*$ for some flare surface $X$.
\end{lemma}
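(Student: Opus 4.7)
The plan is to produce $X$ from a connected compact subsurface $K \subset \Sigma$ whose boundary circles naturally split according to $C$, and then to perform band sums to merge several circles into a single separating curve; I assume throughout that $C$ is a nonempty proper clopen subset (otherwise no flare surface realising $C$ can exist). First I would exhaust $\Sigma$ by connected compact subsurfaces $K_1 \subset K_2 \subset \cdots$. Since both $C$ and $\Ends(\Sigma) \setminus C$ are compact (being closed in the compact space $\Ends(\Sigma)$) and the sets $U^*$ form a basis for its topology, for $n$ large enough every unbounded component of $\Sigma \setminus K_n$ has end set lying entirely in either $C$ or $\Ends(\Sigma) \setminus C$. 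After absorbing any bounded components of $\Sigma \setminus K_n$ into $K_n$, I obtain a connected compact subsurface $K$ whose boundary circles partition into ``$C$-circles'' and ``$C^c$-circles'' according to which side the adjacent complementary component belongs to. Both families are nonempty since $C$ is nonempty and proper.

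Letting $p$ be the number of $C$-circles, I then choose disjoint properly embedded arcs $\alpha_1, \dots, \alpha_{p-1}$ in $K$ with endpoints on the $C$-circles, forming a spanning tree on them and ordered so that each $\alpha_i$ joins two as-yet-unmerged circles. The candidate flare surface is
\[ X = \bigcup\{C\text{-components of }\Sigma \setminus K\} \;\cup\; N, \]
where $N$ is a closed regular neighbourhood in $K$ of $\alpha_1 \cup \cdots \cup \alpha_{p-1}$. A standard band sum in an orientable surface merges two boundary circles into one, so $\partial X$ becomes a single simple closed curve after the $p-1$ band sums; the spanning tree makes $X$ connected; and $X^* = C$ since only the compact set $N$ is added to the $C$-components.

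The main obstacle will be verifying that $\partial X$ is separating. A single simple closed curve in an orientable surface has at most two complementary components, so it suffices to show $\Sigma \setminus X$ is nonempty (immediate since $C^c$-components exist) and connected. Now $\Sigma \setminus X$ is the union of the $C^c$-components of $\Sigma \setminus K$ with $K \setminus N$, and each $C^c$-component meets $K \setminus N$ along a $C^c$-circle of $\partial K$, so connectedness reduces to showing $K \setminus N$ is connected. This I would prove by induction on the spanning tree: cutting a connected surface along an arc with endpoints on two distinct boundary circles preserves connectivity, because each such circle, minus a point, remains a connected arc linking the two sides of the cut. By the ordering of the $\alpha_i$, each cut is between two distinct boundary circles at the moment it is performed, so $K$ stays connected throughout the process, and $X$ is the desired flare surface.
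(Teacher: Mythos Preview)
Your argument is correct and takes a genuinely different route from the paper. The paper's proof is non-constructive: it builds abstract surfaces $\Sigma_1,\Sigma_2$ with one boundary component each, with end spaces $(U,U\cap\Endsg(\Sigma))$ and its complement and with genera summing to $\genus(\Sigma)$, glues them to form $\Sigma'$, and then invokes Richards' classification theorem to produce a homeomorphism $f:\Sigma'\to\Sigma$ inducing the identity on ends; the flare surface is $f(\Sigma_1)$. Your approach instead works entirely inside $\Sigma$: a compact exhaustion yields a $K$ whose complementary components already sort into the $C$-side and the $C^c$-side, and then $p-1$ band sums along a spanning tree merge the $C$-circles into one separating curve. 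This is more elementary (no appeal to the classification theorem) and explicitly locates the flare surface, at the cost of a slightly longer verification that the resulting curve is separating. The paper's version is shorter precisely because it outsources all the work to Richards.

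One small point of precision: as written, $N$ is a neighbourhood of the arcs only, and the $C$-components of $\Sigma\sm K$ are open, so their union misses most of the $C$-circles and $X$ is not a submanifold with boundary. Your later description (``band sums merge two boundary circles into one'', ``each $C^c$-component meets $K\sm N$ along a $C^c$-circle'') makes clear you intend either the closures of the $C$-components, or equivalently $N$ to be a regular neighbourhood of the union of the arcs \emph{and} the $C$-circles; you should say so explicitly. With that fix the boundary computation and the inductive connectivity argument for $K\sm N$ go through exactly as you describe.
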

\begin{proof}
Let $U$ be a proper clopen subset of $\Ends(\Sigma)$. Then there exist two surfaces $\Sigma_1$ and $\Sigma_2$ with the following properties:
\begin{itemize}
    \item each has one compact boundary component;
    \item there are homeomorphisms $$\varphi_1:(\Ends(\Sigma_1),\Endsg(\Sigma_1))\to (U,U\cap\Endsg(\Sigma))$$
and
$$\varphi_2:(\Ends(\Sigma_2),\Endsg(\Sigma_2))\to (\Ends(\Sigma)\sm U,\Endsg(\Sigma)\sm U);$$
\item $\genus(\Sigma_1)+\genus(\Sigma_2)=\genus(\Sigma)$.
\end{itemize}

Let $\Sigma'$ be the surface obtained by gluing $\Sigma_1$ and $\Sigma_2$ along their boundary components. Then $\genus(\Sigma')=\genus(\Sigma)$ and the map $$\varphi:(\Ends(\Sigma'),\Endsg(\Sigma'))\to (\Ends(\Sigma),\Endsg(\Sigma))$$
induced by $\varphi_1$ and $\varphi_2$ is a homeomorphism. So, by \cite{Richards_Classification}, there is a homeomorphism $f:\Sigma'\to \Sigma$ inducing $\varphi$ and $X:=f(\Sigma_1)$ is a flare surface with $X^*=U$.
\end{proof}

A \emph{finite-type exhaustion} $\vX$ is a collection $\{X_n\st n\in \NN\}$ of properly embedded finite-type subsurfaces such that $X_n\subset X_{n+1}$ for every $n$ and $\bigcup_{n\in\NN} X_n=\Sigma$.
 
\subsection{Cantor-Bendixson rank and subsets of the Cantor set}
For the reader's convenience, we collect here some basic facts about the Cantor-Bendixson rank and the Cantor set that will be useful throughout the paper. We refer to \cite{Kechris_Classical} for the definition of the Cantor-Bendixson derivative.

Given a topological space $X$, its \emph{Cantor-Bendixson rank} is the smallest ordinal $\alpha$ such that $X^\alpha=X^{\alpha+1}$, where $X^{\alpha}$ is the $\alpha$-th Cantor-Bendixson derivative of $X$ and $X^0$ is set to be $X$. In particular, if a set is perfect, it has rank $0$. 

A class of topological spaces that will be especially interesting for us are (countable) ordinals of the form
$$X=\omega^\alpha n+1,$$
where $\omega$ is the smallest countable ordinal, $\alpha$ is any countable ordinal and $n$ is a nonnegative integer. Here the topology is the order topology. In terms of the Cantor-Bendixson derivative the pair $(\alpha,n)$ (also called \emph{characteristic system of $N$}) is characterised as follows:
\begin{itemize}
    \item $|X^{(\beta)}|=\infty$ for $\beta<\alpha$,
    \item $|X^{(\alpha)}|=n$,
    \item $|X^{(\beta)}|=0$ for $\beta>\alpha$.
\end{itemize}
In particular, the Cantor-Bendixson rank of such an $X$ is $\alpha+1$.

For a point $x\in X$, we define its \emph{(Cantor-Bendixson) rank} to be the smallest ordinal $\alpha$ such that $x\notin X^{\alpha}$, if such $\alpha$ exists, and $0$ otherwise. With this definition, the highest rank elements in $\omega^\alpha n+1$ have rank $\alpha+1$ and points of the Cantor set have rank $0$.
 
We will implicitly use the following simple topological characterisation of the Cantor set.
\begin{prop}[Brouwer, \cite{Brower_Structure}]
A topological space is a Cantor set if and only if it is non-empty, perfect, compact, totally disconnected and metrisable.
\end{prop}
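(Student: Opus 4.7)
The plan is to prove both directions of the characterisation. The forward implication is elementary: the standard Cantor space $\{0,1\}^{\mathbb{N}}$ with the product topology is non-empty, compact by Tychonoff, metrisable via $d(x,y) = \sum_{n} 2^{-n}|x_n - y_n|$, totally disconnected as a product of two-point discrete spaces, and perfect since for any $x$ the sequence obtained by flipping the $n$-th coordinate converges to $x$ but differs from it. I would dispatch this direction in a sentence. For the converse, suppose $X$ is non-empty, perfect, compact, totally disconnected and metrisable, with a compatible metric $d$; the strategy is to build a homeomorphism $\Phi\colon\{0,1\}^{\mathbb{N}} \to X$ from a nested sequence of clopen partitions indexed by a binary tree.

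First I would recall the standard fact that in a compact Hausdorff totally disconnected space the clopen sets form a basis, since connected components and quasi-components coincide and are singletons here. Then I would inductively construct partitions $\mathcal{P}_n = \{U_s : s \in \{0,1\}^n\}$ of $X$ into non-empty clopen sets satisfying $U_s = U_{s0} \sqcup U_{s1}$ and $\operatorname{diam}(U_s) \leq 2^{-n}$, starting from $\mathcal{P}_0 = \{X\}$. With this in hand, for each $\xi \in \{0,1\}^{\mathbb{N}}$ the nested intersection $\bigcap_n U_{\xi|_n}$ is a single point by compactness plus shrinking diameter, so the assignment $\xi \mapsto \bigcap_n U_{\xi|_n}$ defines a map $\Phi$. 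Bijectivity follows from the disjointness of distinct same-level cylinders together with the fact that each $\mathcal{P}_n$ covers $X$; continuity is immediate since the preimage of every basic cylinder in $\{0,1\}^{\mathbb{N}}$ is one of the clopen sets $U_s$. As the source is compact and the target Hausdorff, $\Phi$ is automatically a homeomorphism.

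The main obstacle will be the inductive step in the tree construction: given $\mathcal{P}_n$, I have to split each $U_s$ into exactly two non-empty clopen pieces, each of diameter at most $2^{-(n+1)}$. Perfectness guarantees that every non-empty clopen subset of $X$ has more than one point, since an isolated point would contradict $P=P'$, so it admits a non-trivial clopen partition; meanwhile compactness combined with the clopen basis lets me refine finely enough to control diameters. The technical trick is to first refine each $U_s$ into some finite clopen partition $\{V_1, \dots, V_k\}$ whose pieces all have diameter at most $2^{-(n+1)}$ (and necessarily $k \geq 2$), then coarsen this refinement into exactly two non-empty clopen blocks such as $V_1$ and $V_2 \sqcup \cdots \sqcup V_k$. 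Iterating produces the desired binary tree of partitions and hence the homeomorphism $\{0,1\}^{\mathbb{N}} \cong X$.
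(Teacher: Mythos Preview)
The paper does not prove this proposition; it merely quotes Brouwer's result as background. So there is no proof in the paper to compare against, and your write-up is a self-contained attempt at a classical theorem.

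Your argument has a genuine gap in the inductive step. You want to split each $U_s$ into two non-empty clopen pieces $U_{s0}, U_{s1}$, \emph{each} of diameter at most $2^{-(n+1)}$, and you propose to do this by first refining $U_s$ into clopen pieces $V_1,\dots,V_k$ of diameter at most $2^{-(n+1)}$ and then coarsening to $V_1$ and $V_2\sqcup\cdots\sqcup V_k$. But a union of sets of small diameter need not have small diameter: the second block $V_2\sqcup\cdots\sqcup V_k$ can be as large as $U_s$ itself. Concretely, take $X = C \times \{0,1,2\}$ with $C$ the Cantor set and a metric making the three copies pairwise at distance $100$ from one another; any partition of $X$ into two non-empty clopen pieces leaves one piece meeting at least two of the copies, hence of diameter $100$, so no single binary split can halve the diameter.

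The repair is to drop the demand that the diameter halve at \emph{every} level and instead ensure only that $\operatorname{diam}(U_{\xi|_n}) \to 0$ along each branch. One clean way: given $U_s$, refine it into $m\geq 2$ clopen pieces of diameter at most $2^{-(n+1)}$, then use perfectness to split some of these further until the total number of pieces is a power of two, say $2^k$, and organise them as the leaves of a depth-$k$ binary subtree below $s$ (the internal nodes being the appropriate unions). Iterating produces a binary scheme whose cylinders have diameters tending to zero, and the rest of your argument then goes through. A minor additional quibble: $\mathcal{P}_0=\{X\}$ need not satisfy $\operatorname{diam}(X)\leq 1$, but this is harmless after rescaling the metric.
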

We will be interested in open and closed subsets of the Cantor set. Perhaps surprisingly, there are only two types of non-empty open subsets of the Cantor set:
\begin{prop}[Gruenhage--Schoenfeld, \cite{SG_Alternate}]
A non-empty open subset of the Cantor set is homeomorphic to either the Cantor set (if it is compact) or to the Cantor set minus a point (if it is not compact).
\end{prop}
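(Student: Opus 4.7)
My plan is to apply Brouwer's topological characterisation of the Cantor set (stated just above the proposition), splitting into the two cases suggested by the statement.

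First I would handle the case where $U$ is compact. Since $C$ is Hausdorff, $U$ is closed as well as open, so it is a nonempty clopen subset of $C$. Total disconnectedness and metrisability are inherited by subspaces; compactness is our hypothesis; and non-emptiness is given. The only nontrivial property to check is that $U$ is perfect. But if $x \in U$, then because $U$ is open in $C$ and $C$ is perfect, any sequence in $C$ converging to $x$ must eventually lie in $U$, showing that $x$ is a limit point of $U \setminus \{x\}$. Brouwer's proposition then gives $U \cong C$.

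Next, for the case where $U$ is not compact, the plan is to show that the one-point compactification $U^+ = U \cup \{\infty\}$ satisfies Brouwer's criteria, which will give $U \cong C \setminus \{\infty\}$. I would first establish that every compact subset $K$ of $U$ is contained in a compact clopen subset of $U$: since $C$ has a basis of clopen sets and $U$ is open, each point of $K$ has a clopen neighbourhood in $C$ contained in $U$, and finitely many of these (by compactness of $K$) union to the required set. From this one deduces that $U$ is locally compact (so the one-point compactification is well defined and Hausdorff) and, crucially, that the clopen-in-$U$ compact neighbourhoods of points in $K$ give clopen neighbourhoods of $\infty$ in $U^+$ of arbitrarily small "size" (their complements in $U^+$ are clopen). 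This, combined with the fact that $U$ itself has a basis of clopen sets, shows $U^+$ is totally disconnected. Metrisability of $U^+$ follows from the fact that $U$ is second countable and locally compact Hausdorff. The space $U^+$ is perfect because $U$ is (by the argument of the first case) and because non-compactness of $U$ forces $\infty$ to be a limit point: every open neighbourhood of $\infty$, being the complement in $U^+$ of a compact subset of $U$, meets $U$ nontrivially. Applying Brouwer gives $U^+ \cong C$, hence $U \cong C \setminus \{p\}$ for any point $p \in C$ (all points being topologically equivalent under homeomorphisms of $C$).

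The main obstacle I anticipate is verifying total disconnectedness of $U^+$ at the added point $\infty$. This requires the key observation above about approximating compact subsets of $U$ by compact clopen subsets, which is really where the zero-dimensionality of $C$ is used in a global way. Everything else is a routine verification of Brouwer's criteria, and the two homeomorphism types produced ($C$ and $C \setminus \{p\}$) are genuinely distinct as one is compact and the other is not.
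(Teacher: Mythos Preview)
Your argument is correct. The paper does not actually supply a proof of this proposition; it is quoted as a result of Gruenhage and Schoenfeld and used as a black box, so there is no ``paper's own proof'' to compare against. Your approach --- verifying Brouwer's characterisation directly for $U$ in the compact case and for the one-point compactification $U^+$ in the non-compact case --- is the natural one and all the verifications go through as you describe. The only place worth a moment's care is metrisability of $U^+$: you should note that $U$, being second countable and locally compact Hausdorff, is $\sigma$-compact, which gives a countable neighbourhood base at $\infty$ and hence second countability (and then metrisability) of $U^+$. Everything else is exactly as you wrote.
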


On the other hand, there are (uncountably) many types of closed subsets of the Cantor set. Still, we can describe their structure in a relatively accurate way. Indeed, a consequence of Brouwer's result and the Cantor-Bendixson theorem is that a closed subset of the Cantor set is either countable or of the form
$$C\cup N,$$
where $C$ is a closed subset homeomorphic to a Cantor set and $N$ is open and countable. 
The sets $C$ and $N$ are disjoint, but the closure of $N$ might intersect $C$.

Moreover, Mazurkiewicz and Sierpi{\'n}ski \cite{MS_Contribution} showed that countable closed subsets of the Cantor set are exactly the sets homeomorphic to ordinals (with the order topology) of the form
$$\omega^\alpha n+1,$$
where $\omega$ is the smallest countable ordinal, $\alpha$ is any countable ordinal and $n$ is a nonnegative integer.

\section{Arcs and their complements}\label{sec:arcs&complements}
In this section we want to understand some topological properties of arcs and of their complements. We will discuss loops and 2-ended arcs separately.

\subsection{2-ended arcs}
We start by looking at 2-ended arcs.  Note that such arcs are nonseparating since if an arc $\alpha$ joins two ends $e\neq f$, we can find a simple closed curve $\gamma$ separating $e$ from $f$.  Since $\gamma$ intersects $\alpha$ an odd number of times, $\alpha$ must be nonseparating.

The first fact that we want to prove is that if two distinct ends are joined by an arc, they have admissible descending chains of flare surfaces which are especially adapted to the arc. Very informally, we want to say that with respect to a special collection of flare surfaces, the arc goes ``straight out'' towards its ends.

\begin{lemma}\label{lem:straight-out}
Let $\alpha$ be a proper arc joining two distinct ends $e$ and $f$. Then there are admissible descending chains of flare surfaces $U_1\supset U_2\supset \dots$ defining $e$ and $V_1\supset V_2\supset\dots$ defining $f$ such that $i(\alpha, \partial U_i)=i(\alpha, \partial V_i)=1$ for every $i$.
\end{lemma}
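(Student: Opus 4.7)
The plan is to start with arbitrary admissible descending chains of flare surfaces defining $e$ and $f$, and modify them by a surgery procedure to achieve intersection number exactly one with $\alpha$.

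For the key construction, I claim the following: given any flare surface $W$ defining $e$, there exists a flare surface $U \subseteq W$ also defining $e$ with $i(\alpha,\partial U)=1$. I would place $\alpha$ in minimal position with $\partial W$ and note that, because $\partial W$ separates $e$ from $f$, the intersection number $i(\alpha,\partial W)=2k+1$ is odd. If $k=0$, take $U=W$; otherwise, I perform a surgery and iterate.

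For the surgery, label the intersection points $p_1,\ldots,p_{2k+1}$ in the order they occur along $\alpha$ (from $f$ to $e$). Since $\alpha$ ends up on the $e$-side of $\partial W$ after its last crossing and switches sides at every crossing, the sub-arc $\beta \subseteq \alpha$ joining $p_{2k-1}$ to $p_{2k}$ lies entirely inside $W$. Then $\partial W \cup \beta$ is a $\theta$-graph, so a regular neighborhood $P$ is a pair of pants with three boundary components: $\partial W$ and two new simple closed curves $\gamma_1,\gamma_2$ inside $W$. The complement $W\setminus P$ splits into two pieces $Q_1,Q_2$ with $\partial Q_i=\gamma_i$. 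Since $P$ is compact, $e$ lies in one of them, say $Q_1$; then $Q_1$ is a flare surface and $\gamma_1=\partial Q_1$ separates $e$ from $f$, forcing $i(\alpha,\gamma_1)$ to be odd. Because the intersections at $p_{2k-1}$ and $p_{2k}$ disappear upon smoothing (and no new ones are introduced, as $\beta$ can be pushed off $\alpha$), we have $i(\alpha,\gamma_1)\leq 2k-1$. Replacing $W$ by $Q_1$ and iterating at most $k$ times produces the desired $U$.

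To assemble the descending chain, I proceed inductively: start with any admissible descending chain $W_1\supset W_2\supset\cdots$ defining $e$, and apply the construction to $W_1$ to obtain $U_1$. Then choose a flare surface $W$ defining $e$ with $W\subseteq U_1$ (by shrinking the initial chain if necessary) and apply the construction to $W$ to obtain $U_2 \subseteq W \subseteq U_1$, and so on. The resulting chain is admissible since it refines an admissible chain defining $e$. The symmetric argument at $f$ produces $V_1 \supset V_2 \supset \cdots$.

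The main technical obstacle is verifying the topology of the surgery — specifically that the regular neighborhood of $\partial W \cup \beta$ really is a pair of pants and that the resulting new boundary components separate $e$ from $f$ as claimed. This relies essentially on $\beta$ lying on one side of $\partial W$, which is why the parity argument selecting the sub-arc $\beta$ between $p_{2k-1}$ and $p_{2k}$ is crucial.
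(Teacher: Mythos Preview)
Your overall strategy is natural, but there is a genuine gap in the surgery step. You assert that $W\setminus P$ ``splits into two pieces $Q_1,Q_2$ with $\partial Q_i=\gamma_i$''; this is equivalent to the subarc $\beta$ being \emph{separating} in $W$, and nothing in your setup forces that. If $W$ has positive genus and $\beta$ runs over a handle, then $W\setminus P$ is connected with two boundary circles $\gamma_1,\gamma_2$, and each $\gamma_i$ is then nonseparating in $\Sigma$ (remove $\gamma_1$ and the remaining pieces are still joined through $\gamma_2$), so neither bounds a flare surface and your iteration cannot proceed. Concretely: take $W$ containing a handle, let $\alpha$ cross $\partial W$ at $p_1$, loop once around that handle inside $W$, cross back at $p_2$, make an essential excursion in $\Sigma\setminus W$, re-enter at $p_3$, and head to $e$. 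This is in minimal position with $i(\alpha,\partial W)=3$, and your $\beta$ is the nonseparating $p_1$--to--$p_2$ arc.

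The paper sidesteps this issue by a different, one-shot construction rather than iterated pair-of-pants surgery. It takes the full subarc $\alpha_1\subset\alpha$ between the \emph{first and last} intersections with $\partial X_{n_1}$ and a small regular neighbourhood $N$ of $\alpha_1\cup\partial X_{n_1}$, chosen so that $N\cap\alpha$ is a single arc. In the compact finite-type surface $N$, this arc joins two boundary components separated by $\partial X_{n_1}$; a change-of-coordinates homeomorphism $\phi$ of $N$ fixing $\partial N$ pointwise carries $N\cap\alpha$ to an arc meeting $\partial X_{n_1}$ exactly once. The desired flare surface is then the one bounded by $\phi^{-1}(\partial X_{n_1})$, with the same end set as $X_{n_1}$. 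No separating hypothesis on any subarc is needed, and intersection number $1$ is achieved in a single step.
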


\begin{proof}
Fix admissible descending chains of flare surfaces $\{X_n\}$ defining $e$ and $\{Y_n\}$ defining $f$. We will modify the sequence $\{X_n\}$ to obtain flare surfaces as required; the same argument can be applied to $\{Y_n\}$.

As $\alpha$ is proper, it intersects $\partial X_n$ finitely many times. Furthermore, there is $n_1$ such that for every $n\geq n_1$ the curve $\partial X_n$ separates $e$ and $f$. In particular $\partial X_n$, for $n\geq n_1$, must intersect $\alpha$ an odd number of times.

If $i(\partial X_{n_1}, \alpha)=1$, we set $U_1:=X_{n_1}$. Otherwise, consider the subarc $\alpha_1$ of $\alpha$ between the first and last intersection of $\alpha$ with $\partial X_{n_1}$. Choose a small enough regular neighbourhood $N$ of $\alpha_1\cup \partial X_{n_1}$, such that $N\cap\alpha$ is the union of $\alpha_1$ and exactly two extra segments of $\alpha$ (see Figure \ref{fig:smallregnbhood}).

\begin{figure}[ht]
\begin{center}
\includegraphics[width=.6\textwidth]{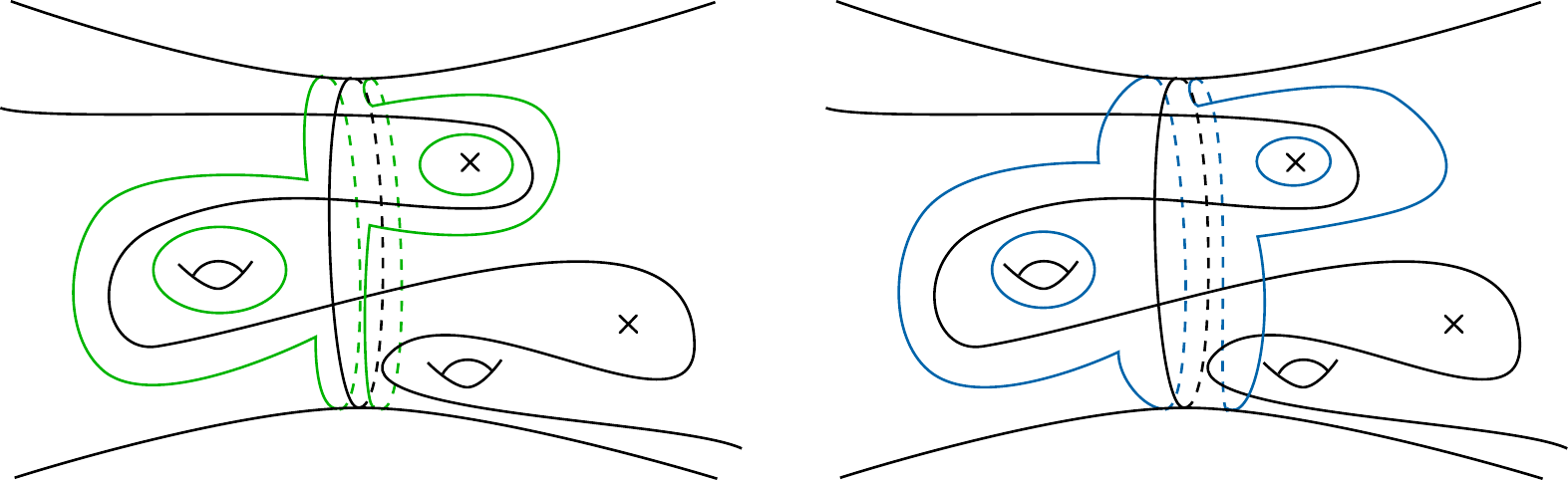}
\caption{An allowed neighbourhood on the left-hand side and one that is not allowed on the right-hand side}\label{fig:smallregnbhood}
\end{center}
\end{figure}

The arc $N\cap\alpha$ joins two distinct boundary components of the finite-type surface $N$ which are separated by $\partial X_{n_1}$. One can construct an arc $\beta$ with the same endpoints as $N\cap\alpha$ and intersecting $\partial X_{n_1}$ once. Furthermore, it is not hard to see that there is a homeomorphism $\phi$ of $N$, fixing its boundary components pointwise, sending $\alpha\cap N$ to $\beta$. Then we can define $U_1$ to be the flare surface with boundary $\phi^{-1}(\partial X_{n_1})$ and with $U_1^*=(X_{n_1})^*$. By construction, $i(\alpha,\partial U_1)=1$.

Since $N$ is compact, there is an $n_2$ such that $X_{n_2}\subsetneq U_1$. With the same argument as before, we can replace $X_{n_2}$ by a flare surface $U_2$ with the same space of ends as $X_{n_2}$ and $i(\partial U_2,\alpha)=1$. Repeating the procedure we obtain the required sequence of flare surfaces.
\end{proof}

We will now show that the mapping class group orbit of a 2-ended arc is determined by the mapping class group orbit of the pair of ends. Since such arcs are nonseparating, one might expect this result to be obvious, but in the next section we will show that this is not true when considering nonseparating loops.

\begin{lemma}\label{lem:orbit-of-arcs}
The mapping class group orbit of an arc $\alpha$ joining two distinct ends $e$ and $f$ is the set of arcs joining ends $e'$ and $f'$, where $(e',f')\in\MCG(\Sigma)\cdot (e,f)$.
\end{lemma}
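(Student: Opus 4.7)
The forward inclusion is immediate: for any $\phi\in\MCG(\Sigma)$ the arc $\phi(\alpha)$ joins $\phi(e)$ and $\phi(f)$, so it lies in the claimed right-hand side. For the reverse inclusion, given an arc $\alpha'$ joining $(e',f') = \phi(e,f)$ for some $\phi \in \MCG(\Sigma)$, replacing $\alpha'$ by $\phi^{-1}(\alpha')$ reduces the problem to showing that any two $2$-ended arcs $\alpha,\alpha'$ joining a common pair of ends $(e,f)$ are in the same $\MCG(\Sigma)$-orbit.

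The first step is to find, via Lemma \ref{lem:straight-out}, flare surfaces $U$ around $e$ and $V$ around $f$, disjoint, with $i(\alpha,\partial U) = i(\alpha',\partial U) = i(\alpha,\partial V) = i(\alpha',\partial V) = 1$. Applying Lemma \ref{lem:straight-out} to $\alpha$ yields flare chains $\{U_n\},\{V_n\}$ adapted to $\alpha$; the arc $\alpha'$ is proper and $\partial U_n$ is compact, so $\alpha' \cap U_n$ consists of a single tail converging to $e$ together with finitely many compact components entering and leaving $U_n$. For $n$ large, these compact components lie outside the (shrinking) set $U_n$, so $i(\alpha',\partial U_n) = 1$ as well; the same argument applies to $V_n$. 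With $W := \Sigma \setminus (U \cup V)$, each of $\alpha,\alpha'$ restricts to a single proper arc in $W$ joining $\partial U$ to $\partial V$. For $\alpha_\bullet \in \{\alpha,\alpha'\}$ set $D_{\alpha_\bullet} := U \cup N_\bullet \cup V$ where $N_\bullet$ is a narrow tubular neighborhood in $W$ of $\alpha_\bullet \cap W$. Then $D_{\alpha_\bullet}$ is an open subsurface homeomorphic to a pair of pants (equivalently, a twice-punctured open disk), whose three ends correspond to $e$, $f$, and a ``frontier'' end bounded by the single simple closed curve $\partial D_{\alpha_\bullet} \subset \Sigma$; the arc $\alpha_\bullet$ is a proper arc in $D_{\alpha_\bullet}$ joining the ends $e$ and $f$. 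The closed complement $\Sigma_{\alpha_\bullet} := \overline{\Sigma \setminus D_{\alpha_\bullet}}$ is then a subsurface with one compact boundary circle, genus $\genus(\Sigma)$, and spaces of ends and of nonplanar ends equal to $\Ends(\Sigma) \setminus \{e,f\}$ and $\Endsg(\Sigma) \setminus \{e,f\}$, respectively.

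These invariants are independent of the chosen arc, so Richards' classification \cite{Richards_Classification} (applied after capping off the compact boundary with a disk) provides a homeomorphism $h : \Sigma_\alpha \to \Sigma_{\alpha'}$. Any two properly embedded arcs joining the two non-frontier ends of a pair of pants are related by a homeomorphism of pairs, so there is also a homeomorphism $g : D_\alpha \to D_{\alpha'}$ sending $\alpha$ to $\alpha'$. The main obstacle is ensuring that $h$ and $g$ agree on the shared curve $\partial D_\alpha = \partial \Sigma_\alpha$: if their restrictions differ, they differ (up to isotopy of the boundary circle) by a power of the Dehn twist about this separating curve; composing $h$ with the same power of the Dehn twist about $\partial \Sigma_\alpha$ -- which is an element of $\MCG(\Sigma)$ -- removes the mismatch, and the glued map is the desired mapping class $\Psi \in \MCG(\Sigma)$ with $\Psi(\alpha) = \alpha'$.
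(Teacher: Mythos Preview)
Your argument implicitly treats $e$ and $f$ as punctures, and it breaks down for general ends in two places.

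First, the flare chain $\{U_n\}$ produced by Lemma~\ref{lem:straight-out} is adapted to $\alpha$, and there is no reason that $i(\alpha',\partial U_n)=1$ for large~$n$. Your justification---that the compact components of $\alpha'\cap U_n$ eventually lie outside $U_n$---is false: as $n$ grows these components change, and if $e$ is nonplanar one can arrange $\alpha'$ to wind around a handle in $U_{n-1}\setminus U_n$ between its first and last crossings of $\partial U_n$, forcing $i(\alpha',\partial U_n)\geq 3$ for \emph{every} $n$. (When $e$ is a puncture this cannot happen, since any proper arc into a once-punctured disk is eventually a straight ray.) This gap might be repairable by rerunning the construction of Lemma~\ref{lem:straight-out} for both arcs simultaneously, but the next issue is fatal.

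Second, the surface $D_{\alpha_\bullet}=U\cup N_\bullet\cup V$ is \emph{not} a pair of pants unless $U$ and $V$ are once-punctured disks, i.e.\ unless $e$ and $f$ are punctures. A flare surface $U$ with $e\in U^*$ carries all the local topology of~$e$: if $e$ is nonplanar then $U$ has infinite genus, and if $e$ is not isolated then $U^*$ contains other ends. Consequently $\Sigma_\alpha=\overline{\Sigma\setminus D_\alpha}$ has end space $\Ends(\Sigma)\setminus(U^*\cup V^*)$, not $\Ends(\Sigma)\setminus\{e,f\}$, and your invariants are miscomputed. Worse, even granting a homeomorphism $D_\alpha\to D_{\alpha'}$, there is no reason it carries $\alpha$ to $\alpha'$: the restrictions $\alpha\cap U$ and $\alpha'\cap U$ are proper arcs from $\partial U$ to $e$, and once $U$ has any topology there are many homeomorphism classes of such arcs. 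This is precisely the original problem, one level down, so the argument is circular.

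The paper avoids this by adapting Richards' back-and-forth proof of the classification theorem: it builds compact exhaustions adapted (via Lemma~\ref{lem:straight-out}) separately to $\alpha$ and to $\beta$, and constructs the homeomorphism as a direct limit, using at each finite stage that on a compact surface any two arcs joining a given pair of boundary components lie in the same mapping class group orbit.
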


\begin{proof}
Clearly, if $\beta$ is in the mapping class group orbit of $\alpha$, its pair of endpoints is in the mapping class group orbit of $(e,f)$. So suppose that $\beta$ is an arc with endpoints $(e',f')\in\MCG(\Sigma)\cdot(e,f)$.

Using Lemma \ref{lem:straight-out}, we can construct a compact exhaustion of $\Sigma$ by surfaces $\{A_n\}$ (respectively, $\{B_n\}$) such that for every $n$:
\begin{itemize}
    \item each boundary component of $A_n$ (respectively, $B_n$) is a separating curve,
    \item $A_n$ separates $e$ from $f$ (respectively, $B_n$ separates $e'$ from $f'$),
    \item $\alpha$ (respectively, $\beta$) intersects exactly two boundary components of $A_n$ (respectively, $B_n$) in exactly one point each.
\end{itemize}
We then run the same procedure as in the proof of \cite[Theorem 1]{Richards_Classification}, with the extra condition that at each step we send $\alpha\cap A_n$ to a subarc of $\beta$ or $\beta\cap B_m$ to a subarc of $\alpha$ (depending on the step). This is possible because on a compact surface, two arcs joining distinct boundary components are in the same mapping class group orbit.
The result is a homeomorphism of $\Sigma$ to itself sending $\alpha$ to $\beta$.
\end{proof}

Our next goal is to describe the topology of the complement of an arc joining two distinct ends.

\begin{lemma}\label{lem:arcbtw2ends}
Suppose $\alpha$ is an arc joining two distinct ends $e$ and $f$. Let $\sim$ be the equivalence relation on $\Ends(\Sigma)$ generated by $e\sim f$ and $\pi:\Ends(\Sigma)\to \Ends(\Sigma)/_\sim$ the natural projection. Then $$(\Ends(\Sigma\sm \alpha),\Ends_g(\Sigma\sm \alpha))\simeq (\pi(\Ends(\Sigma)),\pi(\Endsg(\Sigma)))$$
and $\Sigma\sm\alpha$ has the same genus as $\Sigma$.
\end{lemma}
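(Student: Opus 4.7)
The plan is to reduce both claims to finite-type computations via the adapted flare chains from Lemma \ref{lem:straight-out}. Fix disjoint admissible descending chains $\{U_m\}$ and $\{V_m\}$ of flare surfaces defining $e$ and $f$ respectively, with $i(\alpha,\partial U_m)=i(\alpha,\partial V_m)=1$, and let $X_m$ denote the closure of $\Sigma\sm(U_m\cup V_m)$. Each $X_m$ is a finite-type subsurface with two boundary circles, and the intersection condition forces $\alpha\cap X_m$ to be a single compact subarc joining these two circles. For the genus statement, cutting $X_m$ along $\alpha\cap X_m$ merges the two boundary circles into one; a routine Euler characteristic calculation ($\chi$ increases by $1$ while the number of boundary components decreases by $1$) shows $\genus(X_m\sm\alpha)=\genus(X_m)$. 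Since $\{X_m\sm\alpha\}$ exhausts $\Sigma\sm\alpha$, letting $m\to\infty$ yields $\genus(\Sigma\sm\alpha)=\genus(\Sigma)$.

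For the ends, write $M=\Sigma\sm\alpha$ and define $\Phi\colon\Ends(M)\to\pi(\Ends(\Sigma))$ by sending an end $\epsilon=[W_1\supset W_2\supset\cdots]$ of $M$ to the $\pi$-class of $A_\infty(\epsilon):=\bigcap_n(\overline{W_n}\cap\Ends(\Sigma))$, with closures taken in the Freudenthal compactification of $\Sigma$. Admissibility of the chain, together with the existence of a flare neighborhood in $\Sigma$ of any $g\neq e,f$ disjoint from $\alpha$, forces $A_\infty(\epsilon)$ to be either a single end $\{g\}$ with $g\neq e,f$ or a subset of $\{e,f\}$, so $\Phi$ is well-defined. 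Surjectivity is established by producing a standard admissible chain for each class: for $g\neq e,f$ the flare neighborhoods of $g$ themselves (now sitting in $M$), and for the merged class $*=\pi(\{e,f\})$ the sets $Z_m:=M\sm(X_m\sm N_m)$, where $N_m$ is a shrinking tubular neighborhood of $\alpha$ in $\Sigma$. Compactness of $K_m:=X_m\sm N_m$ in $M$ gives $Z_m$ compact boundary and makes the chain admissible; injectivity then follows because any admissible chain $\{W_n\}$ must avoid the compact $K_m$ for large $n$, and hence lie in $Z_m$.

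The main obstacle I anticipate is showing the $Z_m$ are connected, without which the surjectivity step collapses. I would extract this as a sub-lemma: if $U$ is a surface with a single compact boundary circle and $\beta\subset U$ is a proper arc from a point $p\in\partial U$ to an end of $U$, then $U\sm\beta$ is connected. The strategy is to deflect any transverse crossing of $\beta$ by a path into one side of a tubular neighborhood of $\beta$, then use the connected arc $\partial U\sm\{p\}$ to swap sides. Applied to $U_m$ and $V_m$, this gives $U_m\sm\alpha$ and $V_m\sm\alpha$ connected, and they are joined within $Z_m$ through the two components of $N_m\sm\alpha$. Finally, to upgrade $\Phi$ to a homeomorphism of pairs one matches basic neighborhoods (flare surfaces of $M$ correspond to saturated unions of flare surfaces of $\Sigma$) and observes that the merged end is nonplanar in $M$ precisely when one of $e,f$ is nonplanar in $\Sigma$, because the genus of each $Z_m$ accumulates the genera of both $U_m$ and $V_m$.
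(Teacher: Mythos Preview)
Your approach coincides with the paper's: both build the merged end from sets of the form $Z_m=(U_m\cup V_m\cup N_m)\sm\alpha$ using the adapted flare chains of Lemma~\ref{lem:straight-out}. (The paper defines the homeomorphism in the opposite direction, $\Ends(\Sigma)/_\sim\to\Ends(\Sigma\sm\alpha)$, and dispatches connectivity of $Z_m$ more quickly than your sub-lemma: $U_m\cup V_m\cup N_m$ is connected and $\alpha$ is an arc between two of its distinct ends, hence nonseparating.)

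There is, however, a genuine error in your argument. You assert that $X_m=\overline{\Sigma\sm(U_m\cup V_m)}$ is a finite-type subsurface and that $K_m=X_m\sm N_m$ is compact in $M$. Both claims fail as soon as $\Sigma$ has a third end $g\neq e,f$: every such $g$ lies in $X_m$ for all $m$, so $X_m$ is unbounded. This undermines the Euler-characteristic step for the genus (a minor issue---$\genus(X_m\sm\alpha)=\genus(X_m)$ still holds, but you must argue locally or via a further exhaustion, as the paper does) and, more seriously, your justifications for admissibility of $\{Z_m\}$ and for injectivity of $\Phi$, both of which rest on ``any admissible chain must eventually avoid the compact $K_m$''. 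The conclusions survive, but the reasons must change: admissibility holds because any compact $K\subset M$ sits at positive distance from $\alpha$ and is eventually disjoint from $U_m\cup V_m$; injectivity at the merged class holds because $\partial U_m\cup\partial V_m$ \emph{is} compact, so a chain $\{W_n\}$ with $A_\infty(\epsilon)\subset\{e,f\}$ eventually misses it and hence lies in $U_m$, $V_m$, or $\operatorname{int}(X_m)$---and the last option is excluded since it would force $\overline{W_n}\cap\{e,f\}=\emptyset$.
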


\begin{proof}
We show first that the genus of $\Sigma\sm\alpha$ is the same as the genus of $\Sigma$.
Indeed, using Lemma \ref{lem:straight-out} and properness of the arc, we can write $\Sigma$ as
$$\Sigma=\bigcup_{n\in I}K_n\cup X$$
for some $I\subset \ZZ$, where
\begin{itemize}
    \item the $K_n$ are compact subsurfaces with boundary transverse to $\alpha$ and $X$ is a possibly disconnected subsurface,
    \item if any two subsurfaces in the union intersect, they do so in a union of boundary components,
    \item $\alpha\subset\bigcup_{n\in I}K_n$,
    \item for every $n$, $\alpha$ intersects $\partial K_n$ in exactly two points, belonging to different boundary components.
\end{itemize}

\begin{figure}[ht]
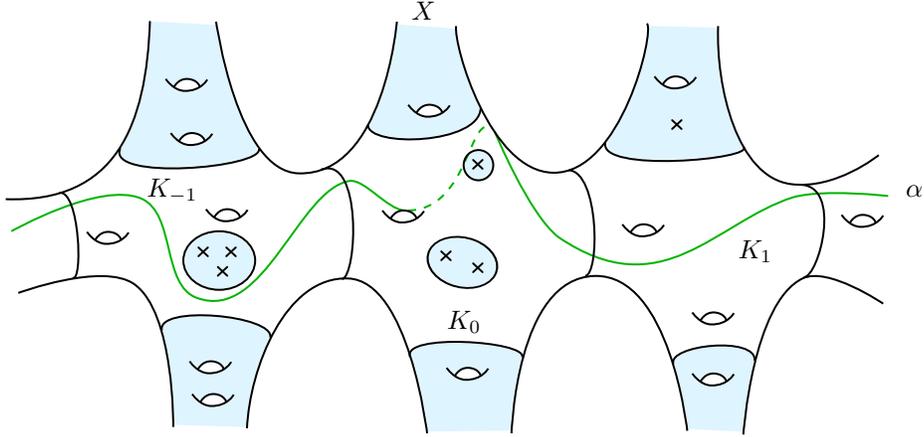

\begin{center}
\begin{overpic}{decomposition}
\put(102,27){$\alpha$}
\put(16,27){$K_{-1}$}
\put(50,12){$K_0$}
\put(83,20){$K_1$}
\put(46,47){$X$}
\end{overpic}
\caption{A decomposition of $\Sigma$ as in the beginning of the proof of Lemma \ref{lem:arcbtw2ends}, where $X$ is shaded}\label{fig:decomposition}
\end{center}
\end{figure}
Then $X\sm\alpha=X$, so the genus of $X\sm\alpha$ is the genus of $X$.
Since $K_n$ is of finite-type, the genus of $K_n\sm\alpha$ is the same as the genus of $K_n$.
Hence the genus of $\Sigma\sm\alpha$ is the same as the genus of $\Sigma$.

To prove the statement about the space of ends of $\Sigma\sm\alpha$, we define a map $$\phi:\Ends(\Sigma)/_\sim\to \Ends(\Sigma\sm\alpha)$$ as follows.

If $[x]\neq [e]$, there is an admissible descending chain of flare surfaces $\{U_n\}$ for $x$ which is disjoint from $\alpha$ (by properness). We set $\phi([x])$ to be the end defined in $\Sigma\sm\alpha$ by the class of $\{U_n\}$.

To define $\phi([e])$, choose two admissible descending chains of flare surfaces $\{U_n\}$ and $\{V_n\}$ for $e$ and $f$ respectively, such that $U_1\cap V_1=\emptyset$. Fix a complete hyperbolic structure on $\Sigma$.
Define
$$W_n:=U_n\cup V_n\cup N_{\frac{1}{n}}(\alpha),$$
where $N_{\frac{1}{n}}(\alpha)$ is the $\frac{1}{n}$-neighbourhood of $\alpha$. We claim that $\{Z_n:=W_n\sm \alpha\}$ is an admissible descending chain and hence defines an end of $\Sigma\sm\alpha$, which we will set to be $\phi([e])$. First of all, it is clear that $Z_n$ is the closure of an unbounded open set with compact boundary. Moreover, since $U_n,V_n$ and $\alpha$ are connected, $W_n$ is connected, and since $\alpha$ joins two different ends, it cannot separate $W_n$, so $Z_n$ is connected. Finally, let $K$ be a compact subset of $\Sigma\sm\alpha$. Then $K$ is a compact subset of $\Sigma$ disjoint from $\alpha$. So there is an index $n_1>0$ such that the distance of $K$ from $\alpha$ is at least $\frac{1}{n_1}$.
Because $\{U_n\}$ and $\{V_n\}$ are admissible chains, there is an index $n_2$ such that $K$ is disjoint from $U_n\cup V_n$ for every $n\geq n_2$. This implies that $K$ is disjoint from $Z_n$, for every $n\geq\max\{n_1,n_2\}$.

Since $\Ends(\Sigma)$ is Hausdorff, it is not difficult to show that $\phi$ is injective. To prove surjectivity, let $s$ be an end of $\Sigma\setminus\alpha$. Then either $s=\phi([e])$, in which case we are done, or $s\neq \phi([e])$, so there is a simple closed curve $\gamma$ of $\Sigma\sm\alpha$ separating $s$ from $\phi([e])$. Since the $Z_n$ define $\phi([e])$, this implies that there is some $n$ such that $\bar{Z_n}$ is disjoint from $\gamma$. Let $\{Y_m\}$ be an admissible descending chain of flare surfaces in $\Sigma\setminus\alpha$ defining $s$; we can also assume that $Y_1$ is disjoint from $\gamma$. Then the $Y_m$ are unbounded in $\Sigma$ as well and define an end $x$ of $\Sigma$ satisfying $\phi([x])=s$.

Continuity follows from the definition of the map. This is enough to show that $\phi$ is a homeomorphism, since $\Ends(\Sigma)/_\sim$ is compact and $\Ends(\Sigma\sm \alpha)$ is Hausdorff.

To conclude, note that $\phi([x])$ is planar if and only if all ends in the equivalence class of $x$ are planar, which implies that $\pi(\Endsg(\Sigma))$ surjects onto the space of nonplanar ends of $\Sigma\sm\alpha$.
\end{proof}

\subsection{Loops}\label{sec:loops}
Loops can be both separating and nonseparating.  What may sound surprising is that nonseparating loops based at ends in the same mapping class group orbit \emph{need not} be in the same mapping class group orbit themselves.  An example is given by the two arcs in Figure \ref{fig:nonsep2orbits}. One can show that $\Sigma\sm\alpha$ is a once-punctured Loch Ness monster, while $\Sigma\sm\beta$ is Jacob's ladder.

\begin{figure}[ht]
\begin{center}
\begin{overpic}[width=.35\textwidth]{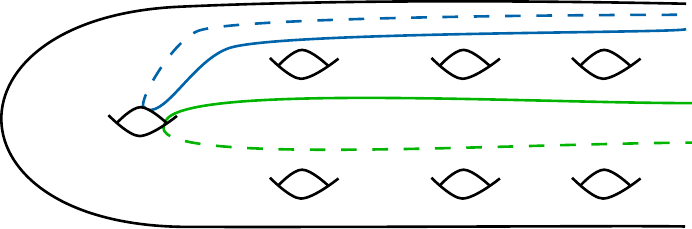}
\put(102,28){$\alpha$}
\put(102,12){$\beta$}
\end{overpic}
\caption{Two different mapping class group orbits of nonseparating loops}\label{fig:nonsep2orbits}
\end{center}
\end{figure}

It is hence impossible to prove a result similar to Lemma \ref{lem:arcbtw2ends} for generic loops. If the loop is separating however, we can give a description of the end space of the components of its complement:

\begin{lemma}\label{lem:separating_arc}
Suppose $\alpha$ is a separating loop based at an end $e$. Let $S$ be a connected component of $\Sigma\sm \alpha$ and set
$$C:=\{x\in\Ends(\Sigma)\st \exists\, X \,\mbox{flare surface of }\Sigma \mbox{ such that } x\in X^*\mbox{  and } X\subset S\}.$$
Then there is a natural homeomorphism $\phi: C\cup\{e\}\to\Ends(S)$. Furthermore, given $f$ in $C$, $\phi(f)$ is planar if and only if $f$ is planar. If $e$ is planar, then $\phi(e)$ is planar.
\end{lemma}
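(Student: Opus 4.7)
The plan is to construct $\phi$ directly on each piece and then identify $\Ends(S)$ with the connected components of the frontier of $S$ inside the Freudenthal compactification $\overline{\Sigma}$ of $\Sigma$.

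First I would describe the frontier $\overline{S} \cap (\overline{\Sigma} \setminus S)$. As in the previous lemma, the topological frontier of $S$ inside $\Sigma$ is $\alpha$. Among ends of $\Sigma$, $S$ accumulates precisely on $C \cup \{e\}$: for $x \in C$ this is by definition; for $e$ it follows from the fact that the one-sided collar of $\alpha$ in $S$ approaches $e$; for any other end $y$ there is a flare surface around $y$ contained in the other component $T$, which separates $y$ from $S$. Thus $\overline{S} \cap (\overline{\Sigma} \setminus S) = \alpha \cup C \cup \{e\}$. Since both endpoints of $\alpha$ go to $e$, the closure of $\alpha$ in $\overline{\Sigma}$ is the topological circle $\alpha \cup \{e\}$; each $x \in C$ is isolated from this circle via a flare surface $X \subset S$ with $x \in X^*$ and $e \notin X^*$. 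The connected components of the frontier are therefore $\alpha \cup \{e\}$ together with the singletons $\{x\}$ for $x \in C$, giving a natural bijection with $C \cup \{e\}$.

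Next I would define $\phi$. For $x \in C$, pick a flare surface $X \subset S$ with $x \in X^*$; a descending chain of shrinkings of $X$ inside $S$ is an admissible chain of $S$, whose end is $\phi(x)$. For $\phi(e)$, take a one-sided tubular neighborhood $\mathcal{N}$ of $\alpha$ on the $S$-side, homeomorphic to $\alpha \times (0,\epsilon) \cong \mathbb{R}^2$. An admissible chain in $S$ is then given by $\mathcal{N} \setminus B_n$ for a compact exhaustion $\{B_n\}$ of $\mathcal{N}$, each term being a connected unbounded open set with compact boundary (a circle) in $S$; set $\phi(e)$ to be the end this chain defines.

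For bijectivity, given $s \in \Ends(S)$ with admissible chain $\{T_n\}$, I would examine the limit set $L = \bigcap_n \overline{T_n}^{\overline{\Sigma}}$. Standard facts (descending intersection of connected compacts in a compact Hausdorff space) give that $L$ is non-empty and connected, and admissibility forces $L \subset \overline{S} \setminus S$; so $L$ lies in a single connected component of the frontier, either $\{x\}$ for some $x \in C$ (yielding $s = \phi(x)$ by comparing descending chains of flare surfaces) or a subset of $\alpha \cup \{e\}$ (yielding $s = \phi(e)$). Continuity of $\phi$ follows from the construction, and since $\Ends(\Sigma) \setminus (C \cup \{e\})$ is open, $C \cup \{e\}$ is compact. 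A continuous bijection from a compact space to the Hausdorff space $\Ends(S)$ is a homeomorphism. Planarity is inherited from the construction: flare surfaces around $x \in C$ can be chosen planar iff $x$ is planar, and if $e$ is planar then flare surfaces of $e$ in $\Sigma$ are eventually planar, so the relevant pieces of $\mathcal{N}$ near $e$ lie in a planar region.

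The main obstacle will be the surjectivity case $L \subset \alpha \cup \{e\}$: a priori $L$ could be a proper subset of this circle (a point of $\alpha$, a subarc, or $\{e\}$ alone). The key input is that $\partial T_n^S$ is compact in $S$ and hence at positive distance from the closed set $\alpha$; combined with connectedness of $T_n$, unboundedness, and connectedness of the $S$-side collar of $\alpha$, this forces $T_n$ eventually to engulf the entire collar within a neighborhood of $e$, so that $\{T_n\}$ is in fact equivalent to the chain defining $\phi(e)$.
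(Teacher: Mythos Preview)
The paper does not actually give a proof of this lemma; it only says that the argument is the same as for Lemma~\ref{lem:arcbtw2ends}. Your approach via the Freudenthal compactification---identifying ends of $S$ with connected components of the frontier $\overline{S}\setminus S$ in $\overline{\Sigma}$, and handling surjectivity through the limit set $L=\bigcap_n\overline{T_n}$---is a legitimate and more conceptual alternative to the direct chain-by-chain verification that Lemma~\ref{lem:arcbtw2ends} performs. The identification of the frontier as the circle $\alpha\cup\{e\}$ together with the singletons of $C$ is correct.

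There is, however, a genuine error in your construction of $\phi(e)$. You take the open half-collar $\mathcal{N}\cong\alpha\times(0,\epsilon)\cong\mathbb{R}^2$ and assert that $\mathcal{N}\setminus B_n$ has compact boundary ``(a circle)'' in $S$. That is the boundary of $\mathcal{N}\setminus B_n$ \emph{in $\mathcal{N}$}, not in $S$: since $\mathcal{N}$ is open in $S$, the frontier of $\mathcal{N}\setminus B_n$ in $S$ also contains the frontier of $\mathcal{N}$ itself, namely the proper arc $\alpha\times\{\epsilon\}$, which is non-compact. Hence $\{\mathcal{N}\setminus B_n\}$ is not an admissible descending chain in $S$, and your $\phi(e)$ is not defined. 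The remedy is precisely what the proof of Lemma~\ref{lem:arcbtw2ends} does for the merged end: pick an admissible chain $\{U_n\}$ of flare surfaces for $e$ in $\Sigma$ and take $Z_n$ to be the $S$-side of $(U_n\cup N_{1/n}(\alpha))\setminus\alpha$; the flare surfaces $U_n$ cap off the two non-compact ends of the strip, and $\partial_S Z_n\subset\partial_\Sigma(U_n\cup N_{1/n}(\alpha))$ is then compact. With $\phi(e)$ defined via $\{Z_n\}$, your limit-set argument for the case $L\subset\alpha\cup\{e\}$ goes through and shows $\{T_n\}$ is equivalent to $\{Z_n\}$.
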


We omit the proof, since it uses the same arguments as that of Lemma \ref{lem:arcbtw2ends}.

Note that if $e$ is nonplanar, its image might be planar: an example is given in Figure \ref{fig:nonptop}, where one of the complementary components of the arc has finite genus (and hence cannot have a nonplanar end).

\begin{figure}[t]
\begin{center}
\includegraphics[width=.4\textwidth]{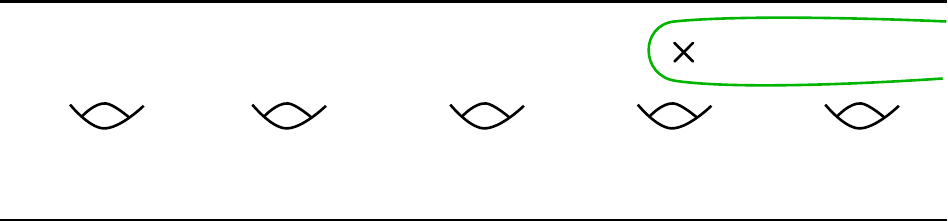}
\end{center}
\caption{Turning a nonplanar end into a planar one}
\label{fig:nonptop}
\end{figure}

\subsection{One-cut (homeomorphic) subsurfaces}
We call a complementary component of a separating loop a \emph{one-cut subsurface}. The goal of this section is to prove Theorem \ref{thmintro:one-cut-homeo}, i.e.\ to show that any infinite-type subsurface admits a \emph{one-cut homeomorphic subsurface} -- a one-cut subsurface which is homeomorphic to the full surface.

Suppose $\alpha$ is an arc joining ends $e,f \in \Ends(\Sigma)$ and $V$ is a flare surface such that $f \in V^*$ and $\abs{\partial V \cap \alpha} = 1$ as in Lemma \ref{lem:straight-out}.  Let $A = \partial V \cup (\alpha \cap \Sigma \sm V)$, and let $N$ be a regular neighbourhood of $A$. Then $\partial N$ is the disjoint union of a closed curve $c$ isotopic to $\partial V$, and a loop $L$ based at $e$. Define the \emph{lasso around $V$ along $\alpha$} to be the loop $L$. Intuitively, the lasso around $V$ along $\alpha$ is the loop formed by starting at $e$, travelling along $\alpha$ until reaching $V$, traversing the simple closed curve $\partial V$, and travelling back along $\alpha$.

\begin{lemma}\label{lem:one-cut-homeo}
If either:
\begin{enumerate}
    \item $\Sigma$ has an isolated nonplanar end, or
    \item $\Sigma$ has a puncture with infinite orbit, or
    \item the space of ends of $\Sigma$ contains a clopen subset homeomorphic to a Cantor set, whose points are either all nonplanar ends or all planar ends,
\end{enumerate}
then $\Sigma$ admits a one-cut homeomorphic subsurface.
\end{lemma}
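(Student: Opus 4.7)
My plan is to prove each of the three hypotheses separately, in each case exhibiting a separating essential proper simple loop $L$ in $\Sigma$ whose complement has a component $B$ with $\genus(B)=\genus(\Sigma)$ and $(\Ends(B),\Endsg(B))\simeq(\Ends(\Sigma),\Endsg(\Sigma))$; Richards' classification then gives $B\simeq\Sigma$. In every case I will read off the end space of $B$ from Lemma~\ref{lem:separating_arc}.

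Cases (1) and (3) will both rest on the self-similarity of a suitably chosen flare surface. For Case (1), I would take a flare $V$ with $V^*=\{e\}$; since $e$ is isolated and nonplanar, $V$ is a Loch Ness monster with one boundary circle, and admits a separating proper loop $L_0$ based at $e$ splitting it into $V_1\supset\partial V$ and $V_2$, both one-ended of infinite genus, so that $V_1\simeq V$. Viewed as a loop in $\Sigma$, $L_0$ cuts off $B=V_1\cup(\Sigma\sm V)$, which has the same genus as $\Sigma$ and end space obtained from $\Ends(\Sigma)$ by replacing $e$ with the nonplanar isolated end of $V_1$, giving $B\simeq\Sigma$. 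For Case (3), I would take a flare $V$ with $V^*=C$, of infinite genus if $C$ is all nonplanar and of genus zero if $C$ is all planar. Using Cantor-set self-similarity, I would pick $c\in C$ and a clopen partition $C\sm\{c\}=C_1\sqcup C_2$ such that $C_1\cup\{c\}$ is itself a Cantor subset of $C$ with the same end-type, then build a separating loop $L_0$ in $V$ based at $c$ with $V_1\supset\partial V$ satisfying $\Ends(V_1)\simeq C_1\cup\{c\}\simeq C$, hence $V_1\simeq V$. Again $B=V_1\cup(\Sigma\sm V)$ has end space $C\sqcup(\Ends(\Sigma)\sm C)=\Ends(\Sigma)$ and the right genus.

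For Case (2), I would pick a second puncture $p'\neq p$ in the orbit of $p$, take $V$ a once-punctured disk with $V^*=\{p\}$, use Lemma~\ref{lem:straight-out} to produce an arc $\alpha$ from $p'$ to $p$ with $|\alpha\cap\partial V|=1$, and form the lasso $L$ around $V$ along $\alpha$. The complementary component $A$ not containing $V$ has $\genus(A)=\genus(\Sigma)$ and, by Lemma~\ref{lem:separating_arc}, $\Ends(A)=(\Ends(\Sigma)\sm\{p,p'\})\sqcup\{p'_A\}$ with $p'_A$ an isolated puncture; equivalently, as topological spaces, $\Ends(A)\simeq\Ends(\Sigma)\sm\{p\}$. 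The claim $A\simeq\Sigma$ thus reduces to $\Ends(\Sigma)\sm\{p\}\simeq\Ends(\Sigma)$ as pairs with $\Endsg(\Sigma)$, and this is the main obstacle. I would prove it by extracting a countably infinite sequence $p=p_0,p_1,p_2,\dots$ of distinct isolated punctures in the orbit of $p$ and constructing an explicit Hilbert-hotel shift homeomorphism sending $p_i\mapsto p_{i+1}$ and fixing all other ends. Continuity at any accumulation point $q$ of $\{p_i\}$ follows because $p_i\to q$ implies $p_{i+1}\to q$, and since the shift fixes all nonplanar ends pointwise, it descends to a homeomorphism of pairs.
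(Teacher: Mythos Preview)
Your overall strategy matches the paper's: in each case you exhibit a separating loop and verify, via Lemma~\ref{lem:separating_arc} and Richards' classification, that one complementary component is homeomorphic to~$\Sigma$. Cases~(1) and~(3) are correct. Your Case~(3) works entirely inside a flare with $V^*=C$, whereas the paper instead splits $C=C_1\sqcup C_2$ into two clopen Cantor pieces and lassoes around a flare with $V^*=C_1$ from an end in~$C_2$; both are routine applications of Cantor self-similarity and amount to the same thing.

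There is, however, a genuine gap in Case~(2). Your Hilbert-hotel shift $\Phi$ sending $p_i\mapsto p_{i+1}$ need not be continuous unless the sequence $(p_i)$ is chosen to \emph{converge}. If for instance $p_0,p_2,p_4,\dots\to q$ while $p_1,p_3,p_5,\dots\to q'$ with $q\neq q'$, then $p_{2k}\to q$ but $\Phi(p_{2k})=p_{2k+1}\to q'\neq q=\Phi(q)$, so $\Phi$ is discontinuous at~$q$. Your justification ``$p_i\to q$ implies $p_{i+1}\to q$'' is literally true only when the \emph{whole} sequence converges to~$q$; it says nothing about subsequential limits, which is what continuity requires. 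The fix is precisely what the paper does: since $\Ends(\Sigma)$ is compact and the orbit is infinite, first extract a subsequence of punctures converging to a single end~$e$, and run the shift along that subsequence. With a unique accumulation point the continuity argument goes through immediately.
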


\begin{proof}
In case 1, there is a separating simple closed curve on $\Sigma$ cutting off a Loch Ness monster with one boundary component. We can choose an arc as in Figure \ref{fig:one-cut-homeo}, left-hand side.

\begin{figure}[ht]
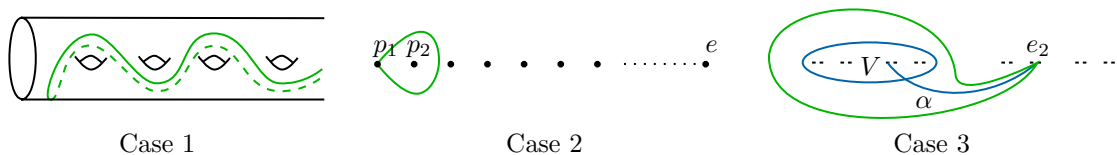

\begin{center}
\begin{overpic}{one-cut-homeo}
\put(10,-3){Case 1}
\put(45,-3){Case 2}
\put(33,6){$p_1$}
\put(36,6){$p_2$}
\put(63,6){$e$}
\put(80,-3){Case 3}
\put(82,1){$\alpha$}
\put(92,6){$e_2$}
\put(77,4){$V$}
\end{overpic}
\vspace{.5cm}
\end{center}
\caption{Loops cutting off a one-cut homeomorphic subsurface}
\label{fig:one-cut-homeo}
\end{figure}

In case 2, we can find a sequence $\{p_n\}$ of punctures in the given infinite orbit which converges to some end $e$. We then choose an arc as in Figure \ref{fig:one-cut-homeo}, in the centre.

In case 3, write the Cantor set $C$ as disjoint union of two Cantor sets $C_1$ and $C_2$. We can find a simple closed curve cutting off a flare surface $V$ with space of ends $C_1$. Pick ends $e_i\in C_i$ and an arc $\alpha$ from $e_1$ to $e_2$ with $i(\alpha, \partial V)=1$. The arc we need is the lasso around $V$ along $\alpha$ (see Figure \ref{fig:one-cut-homeo}, right-hand side). 
\end{proof}
Using this, we show the general result.
\begin{proof}[Proof of Theorem \ref{thmintro:one-cut-homeo}]
If $\Sigma$ is of finite-type, both its genus and its number of planar ends are finite. A one-cut subsurface will have either strictly smaller genus, or strictly fewer punctures (or both), and thus cannot be homeomorphic to $\Sigma$.
Suppose then that $\Sigma$ is of infinite-type. We will show that we can reduce to one of the three cases of Lemma \ref{lem:one-cut-homeo}.

If $\Ends(\Sigma)$ contains infinitely many isolated points, then either there is a puncture of infinite orbit, and we are in case 2, or there is an isolated nonplanar end, and we are in case 1.

Suppose then that $\Ends(\Sigma)$ does not contain infinitely many isolated points. Then $\Ends(\Sigma)=C\sqcup F$, where $C$ is a Cantor set and $F$ is a finite set, possibly empty. Then:
\begin{enumerate}[label=(\alph*)]
\item if $\Endsp(\Sigma)\cap C$ is empty, $C$ is a clopen subset of nonplanar ends and we are in case 3.
\item if $\Endsp(\Sigma) \cap C$ is not empty, by \cite{SG_Alternate} it is either homeomorphic to a Cantor set or to a Cantor set minus a point. In both cases, we can find a clopen subset $C'\subset C$ given by planar ends and homeomorphic to the Cantor set and we are again in case 3.
\end{enumerate}
\end{proof}

\subsection{Homeomorphic subsurfaces from nonseparating arcs}
We now briefly discuss nonseparating arcs, and cases where their complements can or cannot be homeomorphic subsurfaces.  It is due to this more unpredictable behaviour that we restrict our focus to the separating arcs of Theorem \ref{thmintro:one-cut-homeo}.  We omit the proofs of the following examples, we just mention that the claims can be proved with the same techniques as in Section \ref{sec:omnipresent}.

We note first that homeomorphic subsurfaces can indeed occur as the complements of nonseparating arcs.  For example, the complement of any 2-ended arc in the sphere minus a Cantor set is a homeomorphic subsurface.  Next, consider the Loch Ness monster (Figure \ref{fig:nonsep2orbits}) and label the sole end $e$.  Now remove infinitely many points accumulating to $f \neq e$.  Any nonseparating loop based at an end distinct from $e,f$ has a homeomorphic subsurface as its complement.

In contrast to the case of separating arcs, we can also find surfaces that have no nonseparating arcs whose complement is a homeomorphic subsurface.  Consider a surface with at least two, but finitely many, ends, where all ends are nonplanar. We leave it as an exercise for the reader to see that while both 2-ended arcs and nonseparating loops exist on the surface, the complement of any such arc is \emph{not} a homeomorphic subsurface.



\section{Omnipresent arcs}\label{sec:omnipresent}

In the previous section we have seen that all infinite-type surfaces admit one-cut homeomorphic subsurfaces. This gives us a way to select a subclass of arcs, those which do not avoid any such subsurface. In a sense, we want to think of these arcs as ``truly essential'' ones.
\begin{defn}
Let $\alpha$ be a proper arc joining two distinct ends. We say $\alpha$ is \emph{omnipresent} if it intersects all one-cut homeomorphic subsurfaces $S$ (i.e.\ there are no disjoint representatives of the arc and the subsurface).
\end{defn}

\begin{eg}
Let $\Sigma$ be the sphere minus a Cantor set, and let $\alpha$ be a proper arc with ends $e$ and $f$. Let $V \subset \Sigma$ be a flare surface such that $f \in V^*$, $e \notin V^*$ and $i(\partial V, \alpha) = 1$ (such a surface exists by Lemma \ref{lem:straight-out}). Let $L$ be the lasso around $V$ along $\alpha$, which is a separating loop based at $e$. Let $S$ be the component of $\Sigma \sm L$ such that $S \cap \alpha = \emptyset$. Then since $V^*$ is a clopen subset of the Cantor set, $\Ends(\Sigma)\sm V^* \simeq \Ends(\Sigma)$ so by Lemma \ref{lem:separating_arc}, $S$ is a one-cut homeomorphic subsurface of $\Sigma$. We may conclude no 2-ended arcs in $\Sigma$ are omnipresent.
\end{eg}

\begin{eg}
Suppose $\Sigma$ has finitely many ends. Let $\alpha$ be a proper arc with distinct ends, and suppose $S$ is a one-cut subsurface such that $\alpha \subset \Sigma \sm S$. Since $\Sigma$ has finitely many ends and by Lemma \ref{lem:separating_arc} we know that $\abs{\Ends(S)} < \abs{\Ends(\Sigma)}$, $S$ cannot be homeomorphic to $\Sigma$. Therefore every 2-ended arc is omnipresent. As we will see, this also follows from Proposition \ref{prop:fends-are-shelters} below.
\end{eg}

The two examples above are extreme cases; in general omnipresent arcs form a proper subset of the set of 2-ended arcs. In this section we give a sufficient condition for a 2-ended arc to be an omnipresent arc (Proposition \ref{prop:fends-are-shelters}). For this, we need a preliminary lemma.

\begin{lemma} \label{lem:orbit-characterisation}
Let $e,f \in \Ends(\Sigma)$ be distinct ends. Then $e$ and $f$ are in the same mapping class group orbit if and only if there exists disjoint clopen neighbourhoods $e \in U, f \in V$ and a homeomorphism $\phi:(U,U \cap \Ends_g(\Sigma))\to (V,V\cap \Ends_g(\Sigma))$ such that $\phi(e) = f$.
\end{lemma}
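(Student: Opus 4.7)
The plan is to prove the two implications separately, with the end-space classification result recalled in the preliminaries (which says that MCG orbits of ends coincide with orbits under $\Homeo(\Ends(\Sigma),\Endsg(\Sigma))$) doing the heavy lifting for the nontrivial direction.

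For the forward direction, suppose $\psi\in\MCG(\Sigma)$ sends $e$ to $f$. I will use the induced pair-homeomorphism of $(\Ends(\Sigma),\Endsg(\Sigma))$, still denoted $\psi$. Since $\Ends(\Sigma)$ is a closed subset of the Cantor set, it is Hausdorff and admits a basis of clopen sets, so I can fix disjoint clopen neighbourhoods $e\in U_0$ and $f\in V_0$. Setting $U:=U_0\cap\psi^{-1}(V_0)$ gives a clopen neighbourhood of $e$, and $V:=\psi(U)\subset V_0$ is a clopen neighbourhood of $f$ disjoint from $U$. Restricting $\psi$ to $U$ yields the required pair-homeomorphism $\phi\colon(U,U\cap\Endsg(\Sigma))\to (V,V\cap\Endsg(\Sigma))$ with $\phi(e)=f$.

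For the converse, suppose I am given disjoint clopen neighbourhoods $U\ni e$, $V\ni f$ and a pair-homeomorphism $\phi$ as in the statement. I will build a self-homeomorphism of $\Ends(\Sigma)$ that preserves $\Endsg(\Sigma)$ and swaps $e$ with $f$, and then invoke the consequence of \cite{Richards_Classification} recalled in the preliminaries to promote it to a mapping class. Define
$$\Psi\colon\Ends(\Sigma)\to\Ends(\Sigma),\qquad \Psi(x)=\begin{cases}\phi(x)&\text{if }x\in U,\\ \phi^{-1}(x)&\text{if }x\in V,\\ x&\text{if }x\in \Ends(\Sigma)\sm(U\cup V).\end{cases}$$
Because $U$, $V$ and $\Ends(\Sigma)\sm(U\cup V)$ are pairwise disjoint clopen sets covering $\Ends(\Sigma)$, continuity of $\Psi$ follows from the pasting lemma applied to this clopen cover; $\Psi$ is its own inverse, so it is a homeomorphism. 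Since $\phi$ and $\phi^{-1}$ preserve nonplanar ends and the identity piece trivially does, $\Psi$ sends $\Endsg(\Sigma)$ bijectively onto itself. Hence $\Psi\in\Homeo(\Ends(\Sigma),\Endsg(\Sigma))$ and $\Psi(e)=f$, so by the quoted consequence of Richards' classification, $e$ and $f$ lie in the same $\MCG(\Sigma)$-orbit.

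The only potential subtlety is making sure the forward direction really yields \emph{disjoint} neighbourhoods, which is why I first pin down disjoint $U_0,V_0$ and then shrink $U$ inside $U_0$ so that $\psi(U)\subset V_0$; and in the reverse direction, the disjointness of $U$ and $V$ hypothesised in the statement is exactly what makes the piecewise definition of $\Psi$ unambiguous on the overlaps. Modulo these observations the argument is essentially book-keeping, with the end-space classification carrying all the topological content.
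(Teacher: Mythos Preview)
Your proof is correct and follows essentially the same approach as the paper: both build the swap map $\phi$ on $U$, $\phi^{-1}$ on $V$, identity elsewhere for one direction, and restrict the induced end-space homeomorphism to suitably shrunk disjoint clopen sets for the other. The only cosmetic differences are the order of the two implications and the precise recipe for shrinking (the paper takes $V=\Phi(W)\sm W$ and $U=\Phi^{-1}(V)$ for a single clopen $W\ni e$ missing $f$, whereas you start from disjoint $U_0,V_0$ and intersect).
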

\begin{proof}
Suppose $U \ni e$ and $V \ni f$ are disjoint clopen neighbourhoods and $\phi:(U,U \cap \Ends_g(\Sigma)) \to (V, V \cap \Ends_g(\Sigma))$ is a homeomorphism such that $\phi(e) = f$. Then $U \cup V$ is a clopen set. Define $\eta:U \cup V \to U \cup V$ by
\[
\eta(x) = \begin{cases}
\phi(x) &\text{if } x \in U \\
\phi^{-1}(x) &\text{if } x \in V.
\end{cases}
\]
Then $\eta$ is a homeomorphism of the clopen set $U \cup V$, so we can extend it by the identity to a homeomorphism $\Phi:(\Ends(\Sigma),\Ends_g(\Sigma)) \to (\Ends(\Sigma),\Ends_g(\Sigma))$. Note that $\Phi(e) = f$.

Conversely, suppose $\Phi(e) = f$ for some homeomorphism $\Phi \in \Homeo(\Ends(\Sigma),\Ends_g(\Sigma))$. Let $W \ni e$ be a clopen neighbourhood such that $f \notin W$. Let $V = \Phi(W)\sm W$, which is a clopen neighbourhood of $f$. Let $U = \Phi^{-1}(V)$. Note that $U \subset W$ so $U \cap V = \emptyset$. Furthermore, $\Phi|_U:(U,U\cap F) \to (V,V \cap F)$ is a homeomorphism such that $\Phi|_U(e) = f$.
\end{proof}

We are now in a position to prove one direction of Theorem \ref{thmintro:omnipresent&fends}.

\begin{defn}
An end $e \in \Ends(\Sigma)$ is a \emph{finite-orbit end} if it has finite mapping class group orbit.
\end{defn}

\begin{prop}\label{prop:fends-are-shelters}
Let $\alpha$ be a proper arc in $\Sigma$ with distinct endpoints $e$ and $f$. If $e$ and $f$ are both finite-orbit ends, then $\alpha$ is an omnipresent arc.
\end{prop}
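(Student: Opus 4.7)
The strategy is proof by contradiction: assume some one-cut homeomorphic subsurface $S$ is disjoint from $\alpha$ and show this forces $e = f$. Let $\ell$ be the separating loop defining $S$, based at some end $g$, and, following Lemma \ref{lem:separating_arc}, identify $\Ends(S)$ with $C \cup \{g\}$, where
\[
C = \{x \in \Ends(\Sigma) \st \exists\, X \text{ flare surface of } \Sigma,\ x \in X^*,\ X \subset S\}
\]
is open in $\Ends(\Sigma)$; write $C' := \Ends(\Sigma) \setminus (C \cup \{g\})$. Since any point of $C$ has a flare-surface neighborhood contained in $S$, an arc disjoint from $S$ cannot have an endpoint in $C$, hence $e, f \in C' \cup \{g\}$.

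Writing $\mathcal{F}$ for the set of finite-orbit ends of $\Sigma$, the crux of the argument will be the inequality
\[
|\mathcal{F} \cap (C' \cup \{g\})| \leq 1,
\]
which is incompatible with having two distinct finite-orbit ends both in $C' \cup \{g\}$. To prove it, I will use $S \cong \Sigma$ to invoke Richards' classification, obtaining a pair-homeomorphism $\Phi : (\Ends(\Sigma), \Endsg(\Sigma)) \to (\Ends(S), \Endsg(S))$; this sets up a cardinality-preserving bijection between $\MCG(\Sigma)$-orbits in $\Ends(\Sigma)$ and $\MCG(S)$-orbits in $\Ends(S)$.

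The second ingredient is that, for any $x \in C$, sufficiently small clopen neighborhoods of $x$ agree as pairs in $\Ends(\Sigma)$ and in $\Ends(S)$: $C$ is open in $\Ends(\Sigma)$ so neighborhoods can be shrunk inside $C$, and planarity there agrees by Lemma \ref{lem:separating_arc}. Together with Lemma \ref{lem:orbit-characterisation}, this identifies the $\MCG(\Sigma)$- and $\MCG(S)$-orbit relations on $C$, so for each finite orbit-type $T$ the count of type-$T$ ends lying in $C$ is the same whether computed in $\Ends(\Sigma)$ or in $\Ends(S)$. The only possible discrepancy between the two end pairs is the single point $g$, whose $\Ends(S)$-type may differ from its $\Ends(\Sigma)$-type (see Figure \ref{fig:nonptop}). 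Equating the total type-$T$ counts on both sides for each finite orbit-type $T$ then forces $C'$ to contain at most one finite-orbit end, and moreover that if it does then $g \notin \mathcal{F}$; a short case split on whether $g \in \mathcal{F}$ yields the key inequality.

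The main obstacle I anticipate is the identification of $\MCG(\Sigma)$- and $\MCG(S)$-orbits on $C$-points, since a priori these are orbits of different group actions on different end spaces. Making the local-pair-topology argument precise, and tracking carefully how a possible change of $g$'s type between $\Sigma$ and $S$ affects the global count of finite-orbit ends, is where the technical work will lie.
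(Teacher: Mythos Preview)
Your proposal is correct and takes essentially the same approach as the paper: both identify $\Ends(S)$ with $C\cup\{g\}$ via Lemma~\ref{lem:separating_arc}, note that $e,f\notin C$, and then use the homeomorphism $\Sigma\cong S$ together with Lemma~\ref{lem:orbit-characterisation} to transfer finite-orbit information between $\Sigma$ and $S$ and reach a contradiction. The only difference is organizational: the paper restricts attention to the finite set $O=\MCG(\Sigma)\cdot e\cup\MCG(\Sigma)\cdot f$ and pigeonholes to find a single $x\in O$ with $\varphi(x)\in C\setminus O$ (then applies Lemma~\ref{lem:orbit-characterisation} once), whereas you run a global count over finite orbit-types to obtain $|\mathcal{F}\cap(C'\cup\{g\})|\leq 1$.
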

\begin{proof}
Suppose $\alpha$ is not omnipresent, and that $e$ and $f$ are both finite-orbit ends. Let $S \subset \Sigma$ be a one-cut homeomorphic subsurface such that $\alpha \cap S = \emptyset$ and let $\varphi:\Sigma\to S$ be a homeomorphism. Recall from Lemma \ref{lem:separating_arc} the subspace $C \subset \Ends(\Sigma)$ given by
\[
C:= \{x \in \Ends(\Sigma) \mid \exists X \text{ a flare surface of $\Sigma$  such that $x \in X^*$ and $X \subset S$}\}.
\]
Note that both ends $e$ and $f$ are not in $C$, since then $\alpha \cap S \neq \emptyset$. By Lemma \ref{lem:separating_arc} identify $\Ends(S)$ with the subspace $C \cup\{b\} \subset \Ends(\Sigma)$ where $b$ is the base of the separating loop defining $S$.

To ease notation, let $O = \MCG(\Sigma)\cdot e \cup \MCG(\Sigma) \cdot f$, which is a finite set since $e$ and $f$ are finite-orbit ends. Since $\abs{\Ends(S) \sm C} = 1$, we must have $\abs{\varphi(O) \cap C} \geq \abs{O}-1$. Since $e,f \notin C$, $\abs{O \cap C} \leq \abs{O} - 2$. Therefore there exists an end $x \in O$ such that $\varphi(x) \in C$ and $\varphi(x) \notin O$. Let $V \subset S$ be a flare surface such that $\varphi(x) \in V^*$, and note that $V^*$ is a clopen subset of both $\Ends(S)$ and $\Ends(\Sigma)$.

Let $W \subset V^*$ be a clopen neighbourhood of $\varphi(x)$ such that $x \notin W$ and let $Z = \varphi^{-1}(W) \sm W$. Note that $\varphi(Z)\subset V^*\subset C$ and $\varphi(Z) \cap Z = \emptyset$. For all $z \in \varphi(Z)$, $z$ is planar in $\Ends(S)$ if and only if $z$ is planar in $\Ends(\Sigma)$ by Lemma \ref{lem:separating_arc}. Therefore $\varphi|_Z$ induces a homeomorphism of pairs $(Z,Z \cap \Ends_g(\Sigma)) \cong (\varphi(Z),\varphi(Z) \cap \Ends_g(\Sigma))$ sending $x$ to $\varphi(x)$. We are now forced to conclude $x$ and $\varphi(x)$ are in the same $\MCG(\Sigma)$-orbit by Lemma \ref{lem:orbit-characterisation}, a contradiction. 
\end{proof}

\section{Stability conditions}\label{sec:stability}

After playing with a few examples of infinite-type surfaces, one may be tempted to guess that the converse to Proposition \ref{prop:fends-are-shelters} is also true. As the next example shows, this turns out to be wishful thinking.
\begin{eg}\label{eg:unstable}
Consider the sphere with a Cantor set removed, pictured as the tubular neighbourhood of an infinite rooted binary tree without leaves. Remove pairwise disjoint disks, one corresponding to each vertex different from the root, not accumulating anywhere inside the surface. Glue in the place of each disk corresponding to a vertex at distance $n$ from the root the surface $X_n$ with one boundary component and end space given by
$$(\Ends(X_n),\Endsg(X_n))\simeq (\omega^n+1,\{\omega^n\}).$$
Less formally, the $2^n$ copies of $X_n$ each have a single nonplanar end, of rank $n+1$, accumulated by planar ends. Let $\Sigma$ be the surface obtained this way, see Figure \ref{fig:GMCT}.

\begin{figure}[ht]
    \centering
    \includegraphics[width=\textwidth]{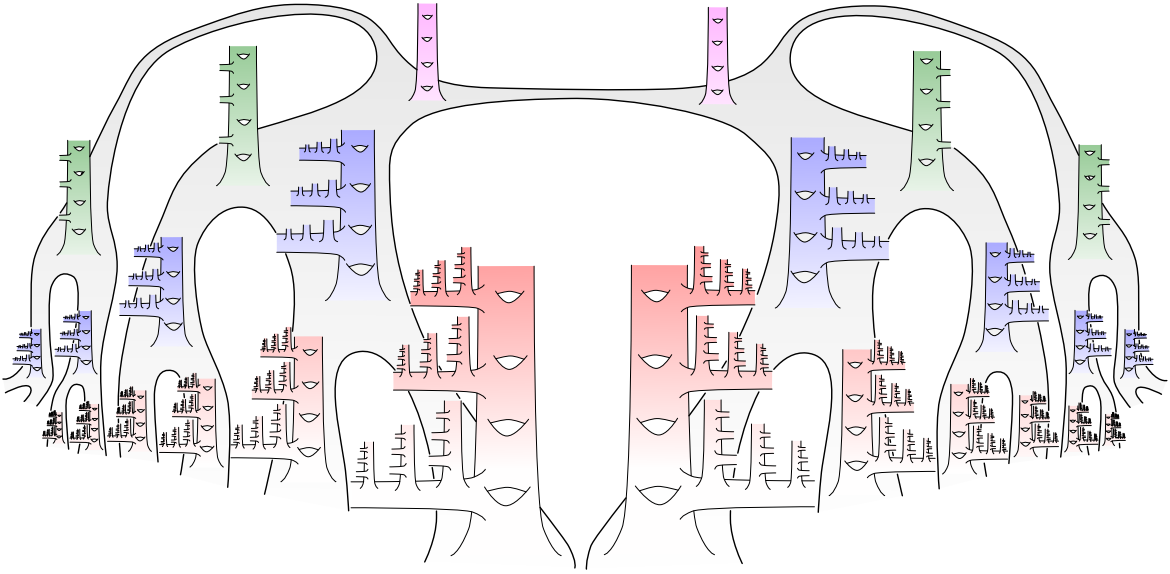}
    \caption{A sketch of the surface $\Sigma$ constructed in Example \ref{eg:unstable}.  The pink, green, blue, and red subsurfaces correspond to copies of $X_0$, $X_1$, $X_2$, and $X_3$ respectively.}
    \label{fig:GMCT}
\end{figure}
It is not hard to see that the finite-orbit ends of $\Sigma$ are the nonplanar ends of finite non-zero rank (i.e.\ the nonplanar ends of the surfaces $X_n$). On the other hand, one can show that any arc $\alpha$ joining two ends $e, f$ of rank zero is an omnipresent arc. Indeed, for any loop $\beta$ such that $\Sigma\sm\beta=S_1\cup S_2$, where $S_1$ contains $\alpha$, either $e$ or $f$ is completely contained in $S_1$ -- say it's $e$. Since there is a sequence of finite-orbit ends converging to $e$, there are infinitely many finite-orbit ends which are not completely contained in $S_2$. It is then easy to argue (using Lemmas \ref{lem:separating_arc} and \ref{lem:orbit-characterisation}) that $S_2$ cannot be homeomorphic to $\Sigma$.
\end{eg}

\subsection{Stable ends and a preorder}
The example above shows that to prove a converse to Proposition \ref{prop:fends-are-shelters} we need to restrict to a good class of surfaces.  Otherwise, we can encounter problems given by ends with wild behavior; similar to the situation in Mann and Rafi's work \cite{MR_Large}. There the authors introduce the following definition:
\begin{defn}
Let $e$ be an end of a surface. A clopen neighbourhood $U$ of $e$ is \emph{stable} if for every clopen neighbourhood $V\subset U$ there is a clopen neighbourhood $U'\subset V$ such that $(U,U\cap\Ends_g(\Sigma))\simeq(U',U'\cap\Ends_g(\Sigma))$. We say that an end is \emph{stable} if it admits a stable neighbourhood, and \emph{unstable} otherwise.
\end{defn}
As shown in \cite[Lemma 4.17]{MR_Large}, an end is stable if and only if there exists a clopen neighbourhood $U$ of $e$ such that for any clopen set $V \subset \Ends(\Sigma)$ with $e \in V \subset U$, $(V,V \cap \Ends_g(\Sigma)) \simeq (U,U\cap \Ends_g(\Sigma))$. We will use this fact without mention in the rest of the paper. We also record for later use the following consequence of \cite[Lemmas 4.16 and 4.17]{MR_Large}.
\begin{lemma}\label{lem:orbit-stable-nhoods}
Let $\Sigma$ be an infinite-type surface, and suppose $e \in \Ends(\Sigma)$ is a stable end. Let $f \in \MCG(\Sigma)\cdot e$. Then $f$ is a stable end and if $V \ni e$ and $W \ni f$ are stable neighbourhoods, then $(V,V \cap \Ends_g(\Sigma)) \simeq (W,W \cap \Ends_g(\Sigma))$. In particular, two stable neighbourhoods of the same end are homeomorphic.
\end{lemma}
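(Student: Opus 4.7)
The plan is to prove the lemma in three small steps: first, that stability of an end is preserved under the mapping class group action; second, that any two stable neighbourhoods of a \emph{single} end are homeomorphic; and third, to combine these to compare stable neighbourhoods of $e$ and $f$.

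For the first step, I would exploit the fact that every $\phi \in \MCG(\Sigma)$ acts on $(\Ends(\Sigma), \Endsg(\Sigma))$ by a self-homeomorphism of pairs. Given $\phi$ with $\phi(e) = f$ and a stable neighbourhood $U$ of $e$, I claim $\phi(U)$ is a stable neighbourhood of $f$. Indeed, $\phi(U)$ is clopen and contains $f$, and for any clopen $V' \subset \phi(U)$ with $f \in V'$, the pullback $\phi^{-1}(V') \subset U$ is a clopen neighbourhood of $e$; stability of $U$ yields a clopen $U' \subset \phi^{-1}(V')$ with $e \in U'$ and $(U, U \cap \Endsg(\Sigma)) \simeq (U', U' \cap \Endsg(\Sigma))$, and transporting this homeomorphism by $\phi$ gives $(\phi(U), \phi(U) \cap \Endsg(\Sigma)) \simeq (\phi(U'), \phi(U') \cap \Endsg(\Sigma))$ with $\phi(U') \subset V'$. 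Hence $f$ is a stable end.

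For the second step, I would invoke the Mann--Rafi strengthening recalled just before the lemma: a stable neighbourhood $U$ of $e$ has the property that \emph{every} clopen subneighbourhood $V$ of $e$ in $U$ satisfies $(V, V \cap \Endsg(\Sigma)) \simeq (U, U \cap \Endsg(\Sigma))$. So if $V_1$ and $V_2$ are both stable neighbourhoods of $e$, then $V_1 \cap V_2$ is a clopen neighbourhood of $e$ contained in each of them, and applying the strengthened characterisation to each gives
\[
(V_1, V_1 \cap \Endsg(\Sigma)) \simeq (V_1 \cap V_2, (V_1 \cap V_2) \cap \Endsg(\Sigma)) \simeq (V_2, V_2 \cap \Endsg(\Sigma)).
\]

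Finally, with $\phi \in \MCG(\Sigma)$ such that $\phi(e) = f$, the pushforward $\phi(V)$ is a stable neighbourhood of $f$ and is homeomorphic to $V$ as a pair of spaces via $\phi$ itself. Applying the second step at $f$ to the two stable neighbourhoods $\phi(V)$ and $W$ yields $(\phi(V), \phi(V) \cap \Endsg(\Sigma)) \simeq (W, W \cap \Endsg(\Sigma))$, and composing gives the desired homeomorphism $(V, V \cap \Endsg(\Sigma)) \simeq (W, W \cap \Endsg(\Sigma))$. There is no substantive obstacle here: the content is essentially bookkeeping once one notes that the mapping class group acts by pair-of-spaces homeomorphisms on $(\Ends(\Sigma), \Endsg(\Sigma))$ and that Mann--Rafi's characterisation upgrades ``some clopen subneighbourhood is homeomorphic to $U$'' to ``every clopen subneighbourhood is homeomorphic to $U$''.
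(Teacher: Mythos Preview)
Your argument is correct. The paper does not actually give a proof of this lemma: it simply records it as a consequence of \cite[Lemmas~4.16 and~4.17]{MR_Large} and moves on. What you have written is a faithful unpacking of that citation---the mapping class group acts by pair-homeomorphisms on $(\Ends(\Sigma),\Endsg(\Sigma))$, so stability transports along orbits, and the Mann--Rafi characterisation lets you compare any two stable neighbourhoods of a given end via their intersection.

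One small point of care in Step~2: the statement recalled in the paper is that a stable end admits \emph{some} clopen neighbourhood $U$ with the strong property (every clopen $V$ with $e\in V\subset U$ is pair-homeomorphic to $U$), not literally that \emph{every} stable neighbourhood has it. Your conclusion still follows, but you may want to phrase it via such a distinguished $U$: for any stable neighbourhood $V_1$ of $e$, the definition of stability gives $V_1'\subset U\cap V_1$ with $V_1'\simeq V_1$, while $V_1'\subset U$ forces $V_1'\simeq U$; hence $V_1\simeq U$, and likewise $V_2\simeq U$. This is the same idea you have, just routed through the guaranteed neighbourhood rather than asserting the strong property for $V_1$ directly.
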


Using the stability condition of an end, we define a class of ``well behaved'' surfaces:
\begin{defn}
A surface is \emph{stable} if all of its ends are stable.
\end{defn}

\begin{rmk}\label{rmk:alot-of-stables}
Even if we restrict to stable surfaces, we are still considering a large class: there are uncountably many such surfaces up to homeomorphism (so the set has cardinality of the continuum, just like the set of homeomorphism classes of surfaces). Indeed, it is for instance not hard to see that if a surface has either only planar or only nonplanar ends and the space of ends is countable, then it is stable, and it is an instructive exercise for the reader to prove this fact. As there are uncountably many countable ordinals and $\omega^\alpha+1$ is homeomorphic to $\omega^\beta+1$ if and only if $\alpha=\beta$, we deduce from this the statement about the cardinality of the set of stable surfaces.
\end{rmk}

The surface constructed in Example \ref{eg:unstable} is not stable: all the rank-zero ends are unstable. This is because they are all limits of sequences of finite-orbit ends, which gives an obvious obstruction to stability:
\begin{lemma}\label{lem:unstable-fend-accumulation}
Let $e \in \Ends(\Sigma)$ be a limit point of the set of finite-orbit ends. Then $e$ is not stable.
\end{lemma}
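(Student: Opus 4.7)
The plan is to argue by contradiction. I would assume $U$ is a stable neighbourhood of $e$. Since $e$ is a limit of finite-orbit ends, any neighbourhood of $e$ contains finite-orbit ends distinct from $e$; fix such an $f\in U$. Using that $\Ends(\Sigma)$ is totally disconnected and Hausdorff, I pick a clopen neighbourhood $V$ of $e$ with $V\subset U\sm\{f\}$. The equivalent formulation of stability from \cite[Lemma 4.17]{MR_Large} (recalled immediately after the definition above) yields a homeomorphism of pairs $\phi:(U,U\cap\Endsg(\Sigma))\to(V,V\cap\Endsg(\Sigma))$.

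The key intermediate claim is that $\phi$ is \emph{orbit-preserving}, meaning $\phi(x)\in\MCG(\Sigma)\cdot x$ for every $x\in U$. If $\phi(x)=x$ this is trivial, so suppose $\phi(x)\neq x$. By continuity of $\phi$ and Hausdorffness of $\Ends(\Sigma)$, I can find a clopen neighbourhood $N\ni x$ in $U$ with $N\cap\phi(N)=\emptyset$ (take $N=W_1\cap\phi^{-1}(W_2)$ for disjoint clopen $W_1\ni x$, $W_2\ni\phi(x)$). Then the restriction $\phi|_N:(N,N\cap\Endsg(\Sigma))\to(\phi(N),\phi(N)\cap\Endsg(\Sigma))$ is a homeomorphism of pairs between disjoint clopen sets sending $x$ to $\phi(x)$, so Lemma \ref{lem:orbit-characterisation} places $x$ and $\phi(x)$ in the same mapping class group orbit.

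With orbit-preservation in hand, the contradiction comes from a counting argument. Let $O_f$ be the orbit of $f$, which is finite by hypothesis. Since both $\phi$ and $\phi^{-1}$ preserve orbits, $\phi$ restricts to a bijection $O_f\cap U\to O_f\cap V$. But $f\in O_f\cap U$ while $f\notin O_f\cap V$ (as $f\notin V$), and $O_f\cap V\subseteq O_f\cap U$ (as $V\subset U$); hence $|O_f\cap V|<|O_f\cap U|\leq|O_f|<\infty$, contradicting the existence of the bijection.

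The main obstacle is the orbit-preservation claim, but it reduces cleanly to Lemma \ref{lem:orbit-characterisation} via the continuity trick above. The conceptual content of the argument is that stability forces the multiplicity with which each orbit meets the neighbourhood to be replicated in every smaller clopen neighbourhood of $e$, which is incompatible with having finitely many representatives of any single orbit near $e$.
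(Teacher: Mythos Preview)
Your proof is correct and takes essentially the same route as the paper: assume a stable neighbourhood, use stability to produce a homeomorphism of pairs onto a smaller clopen subset, and invoke Lemma~\ref{lem:orbit-characterisation} to see that this homeomorphism preserves $\MCG(\Sigma)$-orbits, contradicting the finiteness of $O_f$. The only cosmetic difference is that the paper chooses $f$ with $e\notin\MCG(\Sigma)\cdot f$ and takes the smaller clopen set disjoint from \emph{all} of $O_f$, so that $\phi(f)\in O_f$ is already the contradiction, whereas you remove only $f$ and finish with a counting argument.
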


\begin{proof}
Suppose $e$ is a stable end with stable neighbourhood $V$. Towards a contradiction, let $f$ be a finite-orbit end such that $f\in V$ and $e \notin \MCG(\Sigma)\cdot f$. Let $U$ be a clopen set such that $e \in U \subset V$ and $U \cap \MCG(\Sigma)\cdot f = \emptyset$. Since $V$ is stable, there is a homeomorphism $\phi:(V,V \cap \Ends_g(\Sigma)) \to (U,U \cap \Ends_g(\Sigma))$. Let $W \subset V$ be a clopen neighbourhood of $f$ such that $W \cap U = \emptyset$. Then $\phi(W) \cap W = \emptyset$, so by Lemma \ref{lem:orbit-characterisation}, $\phi(f) \in \MCG(\Sigma)\cdot f$, a contradiction.
\end{proof}

\begin{rmk}
The same proof of Lemma \ref{lem:unstable-fend-accumulation} shows that an end $e$ is unstable also if it is a limit point of the set of ``local finite-orbit ends with respect to $e$'' -- ends $f$ such that there exists a clopen neighbourhood of $e$ containing only finitely many ends in the mapping class group orbit of $f$. It would be interesting to know if this is the only obstruction to stability.
\end{rmk}

Surfaces can then be more or less wild depending on which ends are allowed to be unstable. To distinguish different types of ends, we will use the preorder $\sqleq$ introduced in \cite{MR_Large}. We recall the definition here.
\begin{defn}
Let $e,f$ be ends of a surface. We say that $e\sqleq f$ if for every clopen neighbourhood $U_f$ of $f$ there is a clopen neighbourhood $U_e$ of $e$ and a clopen subset $U_f'\subset U_f$ such that $(U_e,U_e\cap\Endsg(\Sigma))\simeq(U_f',U_f'\cap\Endsg(\Sigma))$.
\end{defn}
\begin{rmk}\label{rmk:order&orbits}
It is not hard to show that $e\sqleq f$ if and only if $f$ is in the closure of the mapping class group orbit of $e$. Therefore, in general $\sqleq$ is not a partial order, since if $e$ and $f$ are distinct ends belonging to the same mapping class group orbit, $e\sqleq f$ and $f\sqleq e$. It is unclear if this is the only obstruction to $\sqleq$ being a partial order, that is, if it induces a partial order on the set of mapping class group orbits of ends.
\end{rmk}

We define $\sim$ to be the equivalence relation on $\Ends(\Sigma)$ given by $e\sim f$ if $e\sqleq f$ and $f\sqleq e$. Then $\sqleq$ induces a partial order, which we also denote by $\sqleq$, on the quotient $\Ends(\Sigma)/\sim$.

\begin{defn}
We say that an end is \emph{maximal} (respectively, an \emph{immediate predecessor of a maximal end}) if its class in $\Ends(\Sigma)/\sim$ is maximal (respectively, immediate predecessor of a maximal element) with respect to $\sqleq$.
\end{defn}

\begin{rmk}\label{rmk:fends-are-maximal}
Using the characterisation of the preorder in Remark \ref{rmk:order&orbits}, we deduce that finite-orbit ends are maximal, since their orbits are already closed in $\Ends(\Sigma)$.
\end{rmk}

Using these notions, Mann and Rafi introduced in \cite{MR_Large} the class of tame surfaces: a surface is \emph{tame} if every end which is either maximal or an immediate predecessor of a maximal end is stable.

Clearly, stable surfaces are tame, but the converse doesn't hold, as the following example shows.

\begin{eg}\label{eg:tame-not-stable}
Consider $X=\RR^2\sm\bigcup_{n=1}^\infty D_n$, where $D_n$ is the disk centered at $(n,0)$ of radius $\frac{1}{3}$. Glue in the place of $D_n$ a copy of $X$ along one of its boundary components to get a surface $Y$. Finally, glue to each boundary component of $Y$ a copy of the surface $S$ constructed in Example \ref{eg:unstable} with a disk removed, to get a surface $\Sigma$. Maximal and immediate predecessors of maximal ends of $\Sigma$ are those corresponding to the ends of $Y$, which are stable. Thus $\Sigma$ is tame. But, again since stability is a local property and $S$ has unstable ends, $\Sigma$ is not stable.
\end{eg}
\begin{figure}[ht]
    \centering
    \includegraphics{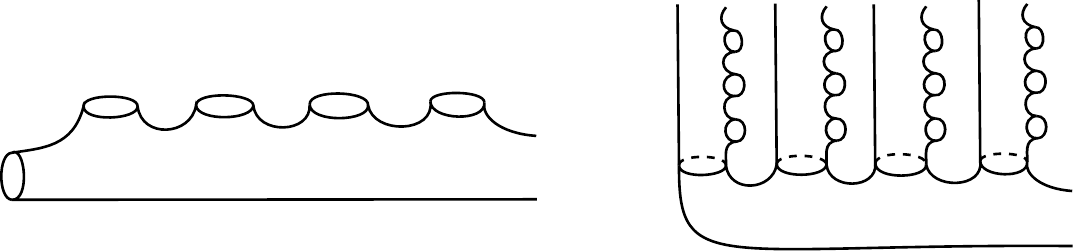}
    \caption{The surfaces $X$ (left-hand side} and $Y$ (right-hand side) defined in Example \ref{eg:tame-not-stable}
    \label{fig:flutes}
\end{figure}

\subsection{Omnipresent arcs on stable surfaces}
The main goal of this section is to show that the converse of Proposition \ref{prop:fends-are-shelters} holds for the class of stable surfaces (Theorem \ref{thmintro:omnipresent&fends}). We begin with the following property of surfaces whose maximal ends are stable.

\begin{prop}\label{prop:finitefends}
Let $\Sigma$ be a surface whose maximal ends are stable. Then $\Sigma$ has finitely many finite-orbit ends.
\end{prop}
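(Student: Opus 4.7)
The plan is to argue by contraposition: supposing $\Sigma$ has infinitely many finite-orbit ends, I will produce a maximal end of $\Sigma$ that is unstable, contradicting the hypothesis. Let $A\subseteq\Ends(\Sigma)$ denote the set of finite-orbit ends and $A'$ its set of accumulation points in $\Ends(\Sigma)$. Since $\Ends(\Sigma)$ is compact and $A$ is infinite, $A'\neq\emptyset$. The set $A'$ is closed, and because $\MCG(\Sigma)$ acts by homeomorphisms preserving $A$, it also preserves $A'$.

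Next I would verify that every $e\in A'$ is unstable. Given any clopen neighbourhood $V$ of $e$, it contains infinitely many points of $A$ distinct from $e$. If $e\in A$ then $\MCG(\Sigma)\cdot e$ is finite; if $e\notin A$, no finite-orbit end lies in $\MCG(\Sigma)\cdot e$; so in either case some $f\in V\cap A$ satisfies $f\notin\MCG(\Sigma)\cdot e$. This is precisely the hypothesis used in the proof of Lemma \ref{lem:unstable-fend-accumulation}, which therefore gives that $e$ is unstable.

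The central step is to extract a maximal end from $A'$. By Zorn's lemma and compactness, $A'$ contains a minimal non-empty closed $\MCG(\Sigma)$-invariant subset $M$: the intersection of any descending chain of such subsets is still non-empty (compactness), closed, and $\MCG(\Sigma)$-invariant, providing a lower bound for the chain. Fix $e\in M$. Then $\overline{\MCG(\Sigma)\cdot e}$ is a non-empty closed $\MCG(\Sigma)$-invariant subset of $M$, so by minimality $\overline{\MCG(\Sigma)\cdot e}=M$. Using Remark \ref{rmk:order&orbits}, any $f$ with $e\sqleq f$ then lies in $M$, and applying the same reasoning to $f$ yields $\overline{\MCG(\Sigma)\cdot f}=M\ni e$, i.e.\ $f\sqleq e$. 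Hence $e$ is maximal in $\Ends(\Sigma)/\sim$, while $e\in A'$ makes it unstable, contradicting the assumption on $\Sigma$.

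The delicate point I would spend the most care on is this minimal-set argument, in particular ensuring that being minimal inside $A'$ is strong enough to force maximality in the preorder $\sqleq$ on all of $\Ends(\Sigma)$ (and not merely inside $A'$); the rest is routine unpacking of Lemma \ref{lem:unstable-fend-accumulation} and the orbit-closure description of $\sqleq$ from Remark \ref{rmk:order&orbits}.
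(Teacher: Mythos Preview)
Your argument is correct and self-contained, but it differs from the paper's in how it produces the maximal unstable end. The paper picks an arbitrary limit point $e$ of the finite-orbit ends, notes it is unstable via Lemma~\ref{lem:unstable-fend-accumulation}, and then invokes \cite[Proposition~4.7]{MR_Large} to obtain a maximal $f\sqgeq e$; since $A'$ is closed and $\MCG(\Sigma)$-invariant and $f\in\overline{\MCG(\Sigma)\cdot e}\subseteq A'$, this $f$ is again unstable, yielding the contradiction. You instead work entirely inside $A'$ and extract a maximal end there directly, via a Zorn/compactness argument producing a minimal closed invariant subset $M\subseteq A'$; every point of $M$ then has orbit closure equal to $M$, which together with Remark~\ref{rmk:order&orbits} forces maximality in the full preorder.

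What each buys: the paper's route is shorter because it outsources the existence of maximal ends to \cite{MR_Large}. Your route avoids that external reference and is essentially a specialised reproof of the same fact, tailored to the invariant set $A'$. Regarding the ``delicate point'' you flag: it is not actually delicate. Once $\overline{\MCG(\Sigma)\cdot e}=M$, any $f$ with $e\sqleq f$ automatically lies in $M$ by Remark~\ref{rmk:order&orbits}, regardless of whether $f$ was assumed to be in $A'$; so maximality in the whole of $\Ends(\Sigma)/\sim$ follows immediately and no extra care is needed.
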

\begin{proof}
Suppose $\Sigma$ has infinitely many finite-orbit ends. Then there is an end $e$ which is a limit point of the set of finite-orbit ends and hence is unstable, by Lemma \ref{lem:unstable-fend-accumulation}. By the assumption on $\Sigma$, $e$ cannot be maximal, but by \cite[Proposition 4.7]{MR_Large}, there is a maximal $f$ such that $f\sqgeq e$ (and $f\not\sim e$). So (by Remark \ref{rmk:order&orbits}) there is a sequence of elements in the mapping class group orbit of $e$ converging to $f$. But as all elements in the sequence are limit points of the set of finite-orbit ends, so is $f$. Thus $f$ is unstable, a contradiction.
\end{proof}

The surface constructed in Example \ref{eg:unstable} has infinitely many finite-orbit ends (all the nonplanar ends of finite rank), and indeed some of its maximal ends are unstable. Moreover, the condition in Proposition \ref{prop:finitefends} is strictly weaker than being tame, as the next example shows.

\begin{eg}
Let $S$ be the surface constructed in Example \ref{eg:unstable} with a disk removed. Consider the one-ended surface $F$ with countably infinitely many boundary components converging to the end (a flute surface). Glue a copy of $S$ to each boundary component of $F$ to get a surface $\Sigma$. There is a unique maximal end of $\Sigma$ (which informally corresponds to the unique end of $F$) and it is stable. Its immediate predecessors are those corresponding to the maximal ends in each copy of $S$, and among these, the rank-zero ones are unstable. So $\Sigma$ is not tame.
\end{eg}

The next result is the key lemma to prove Theorem \ref{thmintro:omnipresent&fends}.
\begin{lemma}\label{lem:dispensible-stable-nhood}
Let $e \in \Ends(\Sigma)$ be a stable end with infinite mapping class group orbit. Then there exists a stable clopen neighbourhood $V \ni e$ such that for all clopen neighbourhoods $U\subset V$ of $e$,  $(\Ends(\Sigma)\sm U,\Ends_g(\Sigma) \sm U) \simeq (\Ends(\Sigma),\Ends_g(\Sigma))$.
\end{lemma}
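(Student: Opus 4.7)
The plan is to produce $V$ via a ``Hilbert's hotel'' argument in the end space, fueled by the infinite orbit, which will supply infinitely many disjoint copies of $V$ accumulating at some point $p \in \Ends(\Sigma) \sm V$.

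First I would verify that the orbit $\MCG(\Sigma) \cdot e$ has an accumulation point $p \neq e$ in $\Ends(\Sigma)$. The orbit is infinite and $\Ends(\Sigma)$ is compact, so accumulation points exist; if every accumulation were $e$, then for any $g \in \MCG(\Sigma)$ with $g(e) \neq e$, applying $g$ (which acts continuously on $\Ends(\Sigma)$) to a sequence of distinct orbit points converging to $e$ produces a sequence of distinct orbit points converging to $g(e) \neq e$, contradicting the supposed uniqueness of $e$ as accumulation. Starting from any stable neighbourhood $V_0$ of $e$, I then shrink to a clopen subneighbourhood $V \subset V_0$ of $e$ with $p \notin V$, which remains stable. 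Using orbit points $e_1, e_2, \ldots \in \Ends(\Sigma) \sm V$ with $e_i \to p$ and Lemma \ref{lem:orbit-stable-nhoods}, I produce pairwise disjoint clopens $V_1, V_2, \ldots \subset \Ends(\Sigma) \sm V$, each a stable neighbourhood of $e_i$ pair-homeomorphic to $V$, and with $V_i \to \{p\}$ in the sense that every clopen neighbourhood of $p$ contains all but finitely many of the $V_i$.

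For an arbitrary clopen neighbourhood $U \subset V$ of $e$, stability of $V$ gives pair-preserving homeomorphisms $U \simeq V \simeq V_i$ for every $i$. I then define $\phi : \Ends(\Sigma) \to \Ends(\Sigma) \sm U$ to be the identity on $(\Ends(\Sigma) \sm V \sm \bigcup_i V_i) \cup (V \sm U)$, a pair-preserving homeomorphism $U \to V_1$, and pair-preserving homeomorphisms $V_i \to V_{i+1}$ for each $i \geq 1$. A bookkeeping check shows the images tile $\Ends(\Sigma) \sm U$ bijectively, and the planar/nonplanar decomposition is preserved piecewise.

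The main obstacle is continuity of $\phi$ at $p$, the unique point where the shift $V_i \to V_{i+1}$ accumulates. Since $\Ends(\Sigma)$ is metrizable, sequential continuity suffices: a sequence $x_n \to p$ is eventually outside the clopen set $V$, and any $x_n$ lying in some $V_{i(n)}$ must have $i(n) \to \infty$, because each clopen $V_j$ is bounded away from $p$, so $\phi(x_n) \in V_{i(n)+1}$ still converges to $p$; away from $p$ the map is locally a homeomorphism of clopen sets. Since $\Ends(\Sigma)$ is compact and $\Ends(\Sigma) \sm U$ is Hausdorff, $\phi$ then upgrades to the desired homeomorphism of pairs.
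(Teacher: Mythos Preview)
Your proposal is correct and matches the paper's approach: both run a Hilbert's-hotel shift along pairwise disjoint stable neighbourhoods $V_i$ of orbit points accumulating at some $p\neq e$, then verify continuity sequentially and upgrade via compact--Hausdorff. The paper is only more explicit on two details you left implicit---recording $p\notin V_i$ for each $i$ (so that $\phi(p)=p$) and organising the continuity check as a three-case analysis of arbitrary convergent sequences rather than singling out $p$.
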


\begin{proof}
Let $Y = \MCG(\Sigma)\cdot e$. Fix an accumulation point $f \neq e$ of $Y$, which exists since $Y$ is infinite and $\Ends(\Sigma)$ is compact. Let $\{W_1,W_2,\ldots\}$ be a sequence of clopen sets defining $f$, that is, $W_i \supset W_{i+1}$ for all $i \geq 1$, and $\bigcap_i W_i = \{f\}$. Let $e_0 = e$, and let $V_0 \ni e$ be a stable neighbourhood of $e$ such that $f \notin V_0$. Now inductively define $e_i \in Y$ and $V_i \subset W_i$ such that $V_i$ is a stable neighborhood of $e_i$, disjoint from $\left(\bigcup_{j=0}^{i-1}V_j\right)$ and $f\notin V_i$, as follows. Fix $i \geq 1$ and let $Z_i = W_i \sm \left(\bigcup_{j=0}^{i-1}V_j\right)$. $Z_i$ is a clopen neighbourhood of $f$,
so $Y \cap Z_i \neq \emptyset$. Choose $e_i \in Z_i\cap Y$ distinct from $f$, and let $V_i$ be a clopen neighbourhood of $e_i$ such that $V_i \subset Z_i$ and $f \notin V_i$. By intersecting with a stable neighbourhood of $e_i$, we may choose $V_i$ to be a stable neighbourhood of $e_i$.

Let $V:= V_0$ and $U\subset V$ be a clopen neighbourhood of $e$, so $U$ is a stable neighbourhood of $e$. By construction, each $V_i$ is a stable neighbourhood of $e_i$, $U \cap V_i = \emptyset$ for all $i \geq 1$, and $V_i \cap V_j = \emptyset$ for all $i \neq j$. By Lemma \ref{lem:orbit-stable-nhoods}, there exist homeomorphisms $\phi_0:(U,U \cap \Ends_g(\Sigma)) \to (V_1,V_1 \cap \Ends_g(\Sigma))$ and $\phi_i:(V_i,V_i \cap \Ends_g(\Sigma)) \to (V_{i+1},V_{i+1} \cap \Ends_g(\Sigma))$ for all $i \geq 1$. Define a map $\Phi:\Ends(\Sigma) \to \Ends(\Sigma) \sm U$ by
\[
\Phi(x) = \begin{cases}

\phi_0(x) &\text{if } x \in U,\\
\phi_i(x) &\text{if } x \in V_i \text{ for some } i \geq 1, \\
x &\text{otherwise.}
\end{cases}
\]
The map $\Phi$ is a bijection, and it restricts to a bijection $\Phi|_{\Ends_g(\Sigma)}:\Ends_g(\Sigma) \to \Ends_g(\Sigma)\sm U$. Since $\Ends(\Sigma)$ is compact and $\Ends(\Sigma)\sm U$ is Hausdorff, it suffices to show $\Phi$ is continuous. We will show sequential continuity since $\Ends(\Sigma)$ is metrisable.

Let $A = \{a_1,a_2,\ldots\}$ be a sequence with infinitely many distinct points in $\Ends(\Sigma)$ converging to $a \in \Ends(\Sigma)$. We split the argument into 3 cases. 

{\bf Case 1:} If $\abs{A \cap (U\cup\bigcup_{i=i}^\infty V_i)}$ is finite, then $a \notin U\cup\bigcup_{i=i}^\infty V_i$.
Therefore all but finitely many of the $a_j$ are such that $\Phi(a_j) = a_j$, so $$\Phi\left(\lim_{j\to \infty} a_j\right) = \Phi(a) = a = \lim_{j\to\infty}\Phi(a_j).$$

{\bf Case 2:} If $\abs{A \cap V_i} = \infty$ for some $i \geq 1$, then $a\in V_i$ (as it's a limit of points in $V_i$). Thus all but finitely many $a_j$ belong to $V_i$ and 
$$\Phi\left(\lim_{j\to\infty}a_j\right) = \phi_i\left(\lim_{j\to\infty}a_j\right) = \lim_{j\to\infty}\phi_i(a_j) = \lim_{j\to\infty}\Phi(a_j).$$
If $|A\cap U|=\infty$, the argument is analogous. 

{\bf Case 3:} Finally, suppose $\abs{A\cap U}$ is finite, $\abs{A \cap V_i}$ is finite for all $i \geq 1$ and $\abs{A \cap (U\cup\bigcup_{i=i}^\infty V_i)}$ is infinite.
Since all but finitely many of the $V_i$ are contained in any particular $W_k$, it follows that for each $W_k$, there is some $a_j \in A \cap W_k$.
Therefore $\lim_{j \to \infty} a_j = f$. By the definition of $\Phi$, we have that $\abs{\Phi(A) \cap V_i}$ is finite for all $i\geq 0$, and $\abs{\Phi(A) \cap (U \cup \bigcup_{i=1}^\infty V_i)}$ is infinite. Therefore by the same argument that showed $\lim_{i \to \infty}a_i = f$ we have $\lim_{i \to \infty} \Phi(a_i) = f$. Observing that $\Phi(f) = f$ completes the proof.
\end{proof}

We now have all the ingredients to prove Theorem \ref{thmintro:omnipresent&fends}, the characterisation of omnipresent arcs for stable surfaces. We will actually prove it for a slightly larger class of surfaces.

\begin{thm}\label{thm:stable-omnipresent}
Let $\Sigma$ be a surface such that all ends with infinite orbit are stable. Then an arc is omnipresent if and only if it joins finite-orbit ends.
\end{thm}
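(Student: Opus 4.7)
\emph{Plan.} One direction is immediate from Proposition \ref{prop:fends-are-shelters}, which needs no stability assumption at all. For the converse, suppose $\alpha$ joins distinct ends $e,f$ and that (say) $e$ has infinite mapping class group orbit; then by hypothesis on $\Sigma$, $e$ is stable. The goal is to construct a one-cut subsurface $S \subset \Sigma$ with $S \cong \Sigma$ and $\alpha \cap S = \emptyset$, which witnesses that $\alpha$ is not omnipresent.

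First I would apply Lemma \ref{lem:dispensible-stable-nhood} to $e$ to produce a stable clopen neighborhood $V \ni e$ (which by shrinking I may assume is disjoint from $f$) with the ``dispensability'' property: $(\Ends(\Sigma) \sm U, \Endsg(\Sigma) \sm U) \simeq (\Ends(\Sigma), \Endsg(\Sigma))$ for every clopen $U$ with $e \in U \subset V$. Then using Lemma \ref{lem:straight-out} I would choose a flare surface $X$ with $e \in X^* \subset V$, $f \notin X^*$ and $i(\alpha, \partial X) = 1$. If $e$ is planar, then stability of $V$ forces $V$ to be planar, so $X$ may be taken of genus zero. Form the lasso $L$ around $X$ along $\alpha$, a separating loop based at $f$, and let $S$ be the component of $\Sigma \sm L$ disjoint from $\alpha$, i.e.\ the component that does not contain $X$ or the regular neighborhood $N$ used to build $L$.

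It remains to verify $S \cong \Sigma$ via Richards' classification. Lemma \ref{lem:separating_arc} identifies $\Ends(S)$ with $\Ends(\Sigma) \sm X^*$, the base $f$ of the loop reappearing as an end of $S$. The straight-out behaviour of $\alpha$ near $f$ guaranteed by Lemma \ref{lem:straight-out} means that on any sufficiently small flare neighborhood of $f$, the lasso construction only removes a thin open strip around one subarc of $\alpha$, so the genus near $f$ is unchanged and the planar/nonplanar type of $f$ is preserved in $S$; hence $\Endsg(S) = \Endsg(\Sigma) \sm X^*$. Lemma \ref{lem:dispensible-stable-nhood} then gives $(\Ends(S), \Endsg(S)) \simeq (\Ends(\Sigma), \Endsg(\Sigma))$. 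For the genus, if $e$ is planar then $X$ was chosen of genus zero so $\genus(S) = \genus(\Sigma)$; if $e$ is nonplanar, then $\Sigma$ has infinite genus and $\Endsg(\Sigma) \sm X^* \simeq \Endsg(\Sigma)$ is nonempty, so $S$ also has a nonplanar end and hence infinite genus. In all cases, $S \cong \Sigma$ and we are done.

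The main technical obstacle is the planarity-preservation step: Lemma \ref{lem:separating_arc} only guarantees that a planar base of a separating loop stays planar, and explicitly warns (via Figure \ref{fig:nonptop}) that a nonplanar base can become planar on a given side. The resolution is that the straightness from Lemma \ref{lem:straight-out} is essential here: because $\alpha$ meets each flare neighborhood of $f$ in a single subarc, the side of the lasso belonging to $S$ retains essentially all of the ends and genus that lay in that neighborhood of $f$ in $\Sigma$, so nonplanarity of $f$ is transferred to $S$ rather than lost.
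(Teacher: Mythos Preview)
Your overall strategy coincides with the paper's: apply Lemma~\ref{lem:dispensible-stable-nhood} to the infinite-orbit end $e$, realise a flare surface $W$ with $e\in W^*\subset V$ and $i(\partial W,\alpha)=1$, take the lasso $L$ around $W$ along $\alpha$, and verify that the component $S$ of $\Sigma\sm L$ missing $\alpha$ is homeomorphic to $\Sigma$. The difference is entirely in how the invariants of $S$ are computed, and here the paper's method is noticeably cleaner than yours.

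You invoke Lemma~\ref{lem:separating_arc} directly for the separating loop $L$ (based at $f$). As you correctly observe, that lemma controls planarity of every end in $C$ but leaves open whether a nonplanar base $f$ stays nonplanar on the $S$-side, forcing you into the ``straight-out'' argument about thin strips near $f$. That argument is correct, but it is ad hoc, and a related hand-wave creeps into your genus claim: the assertion ``$X$ genus zero so $\genus(S)=\genus(\Sigma)$'' still needs justification in the finite-genus case (one must check that the other component of $\Sigma\sm L$ carries no genus).

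The paper sidesteps both issues at once by the substitution $\Pi:=\Sigma\sm W$ and the observation that $S\cong \Pi\sm(\alpha\cap\Pi)$. The point is that $\alpha\cap\Pi$ is a \emph{$2$-ended} arc in $\Pi$ (from $f$ to the new puncture $p$ replacing $W^*$), so Lemma~\ref{lem:arcbtw2ends} applies in full: it gives $\genus(S)=\genus(\Pi)$ outright, and since $p$ is planar the quotient end $[f]=[p]$ inherits exactly the planarity type of $f$. Thus $(\Ends(S),\Endsg(S))\simeq(\Ends(\Sigma)\sm W^*,\Endsg(\Sigma)\sm W^*)$ and $\genus(S)=\genus(\Sigma)$ (after choosing $\genus(W)=0$ when $\genus(\Sigma)<\infty$) with no extra work. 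This is worth internalising: the reduction to Lemma~\ref{lem:arcbtw2ends} via ``cap off the flare and view the lasso as the complement of a $2$-ended arc'' is precisely the device that makes the planarity-at-the-base problem disappear.
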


\begin{proof}
By Proposition \ref{prop:fends-are-shelters}, if $e$ and $f$ are both finite-orbit ends then $\alpha$ is omnipresent. For the converse, suppose $e$ has infinite mapping class group orbit. By assumption, $e$ is stable. Let $V\subset \Ends(\Sigma)$ be a neighbourhood of $e$ with the property that for all clopen sets $U$ such that $e \in U \subset V$, $(\Ends(\Sigma)\sm U,\Ends_g(\Sigma) \sm U) \simeq (\Ends(\Sigma),\Ends_g(\Sigma))$, which exists by Lemma \ref{lem:dispensible-stable-nhood}. Let $W$ be a flare surface with the following properties:
\begin{enumerate}
    \item $e \in W^*\subset V$
    \item $i(\partial W, \alpha) = 1$, and
    \item if the genus of $\Sigma$ is finite, then the genus of $W$ is 0.
\end{enumerate}
The existence of such a flare surface is guaranteed by Lemma \ref{lem:straight-out}. Let $\gamma$ be the lasso around $W$ along $\alpha$, and let $S$ be the component of $\Sigma \sm \gamma$ such that $S \cap W = \emptyset$. By construction $S \cap \alpha = \emptyset$, so it suffices to show $S$ is homeomorphic to $\Sigma$.

Let $\Pi = \Sigma \sm W$. By the construction of $S$, we have that $S$ is homeomorphic to $\Pi \sm \alpha$. Since $\partial W$ is a simple closed curve, the space of ends of $\Pi$ is obtained by replacing the clopen set $W^*$ with a puncture $p$. More formally,
\[
(\Ends(\Pi),\Ends_g(\Pi)) \cong (\Ends(\Sigma) \sm W^* \cup \{p\},\Ends_g(\Sigma) \sm W^*).
\]
Now $\alpha \cap \Pi$ is a proper arc with endpoints $f$ and $p$. Since $p$ is a puncture, 
\[
(\Ends(\Pi\sm \alpha),\Ends_g(\Pi\sm \alpha)) \simeq (\Ends(\Sigma) \sm W^*,\Ends_g(\Sigma)\sm W^*)
\]
by Lemma \ref{lem:arcbtw2ends}. Therefore $(\Ends(S),\Ends_g(S)) \cong (\Ends(\Sigma) \sm W^*,\Ends_g(\Sigma)\sm W^*) \cong (\Ends(\Sigma),\Ends_g(\Sigma))$ since $W^* \subset V$.

If the genus of $\Sigma$ is finite, then the genus of $W$ is zero and the genus of $\Pi$ is equal to the genus of $\Sigma$. Therefore by Lemma \ref{lem:arcbtw2ends}, the genus of $S$ is equal to the genus of $\Pi$, completing the proof.
\end{proof}

We end this section with an example showing that, as claimed, the hypothesis in Theorem \ref{thm:stable-omnipresent} is weaker than stability.

\begin{eg}
Consider $\RR^2\sm\bigcup_{n=1}^\infty D_n$, where $D_n$ is the disk centered at $(n,0)$ of radius $\frac{1}{3}$. Glue in the place of $D_n$ a copy of the surface $X_n$ defined in Example \ref{eg:unstable}. Then the only unstable end is the unique end of infinite rank (corresponding to the point at infinity of the plane), so all infinite-orbit ends are stable, though the surface itself is not stable. In this example, it is easily checked that finite-orbit ends are precisely the nonplanar ends, so an arc is omnipresent if and only if it joins two nonplanar ends. 
\end{eg}

\section{Graphs of arcs with infinite intersection}\label{sec:arcgraphs}
Recall from the introduction that we are interested in two subgraphs of the arc graph $\vA(\Sigma)$; the graph whose vertices are homotopy classes of arcs, and edges correspond to having disjoint representatives. The first is the \emph{omnipresent arc graph} $\Omega(\Sigma)$, i.e.\ the subgraph spanned by omnipresent arcs. The second is the \emph{two-ended arc graph with endpoints in $P$}, denoted $\vA_2(\Sigma,P)$, where $P$ is a finite set of ends. This is the full subgraph of $\vA(\Sigma)$ spanned by vertices corresponding to all isotopy classes of arcs connecting two distinct ends in $P$. We stress that we allow arcs joining planar and/or nonplanar ends, while so far in the literature only arcs joining planar ends have been considered.  In particular, this allows for two arcs to have infinite intersection.

Note that Theorem \ref{thm:stable-omnipresent} tells us that, in the case of stable surfaces, the omnipresent arc graph is the two-ended arc graph $\vA_2(\Sigma,\vF)$, where $\vF$ is the set of finite-orbit ends. We also note that $\vF$ is finite (by Proposition \ref{prop:finitefends}) and mapping class group invariant.
We therefore restrict our focus to $\vA_2(\Sigma,P)$ and we show:

\begin{thm}\label{thm:2-ended-arc-graph}
Let $\Sigma$ be a surface, $P$ a finite collection of ends of cardinality at least three.  Then:
\begin{enumerate}

    \item $\vA_2(\Sigma,P)$ is connected,
    \item $\vA_2(\Sigma,P)$ is $\delta$-hyperbolic, and the constant $\delta$ can be chosen independently of $\Sigma$ and $P$,
    \item $\vA_2(\Sigma,P)$ has infinite diameter,
    \item if $P$ is mapping class group invariant, $\MCG(\Sigma)$ acts with unbounded orbits on $\vA_2(\Sigma,P)$. The action is continuous if and only if all ends in $P$ are punctures.
\end{enumerate}
\end{thm}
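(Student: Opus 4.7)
The plan is to follow the unicorn path approach of Hensel--Przytycki--Webb, adapted to the setting where arcs in $\vA_2(\Sigma, P)$ may intersect infinitely many times. The key new ingredient is a workable notion of unicorn for a pair of arcs $\alpha, \beta$ with possibly infinite $\alpha \cap \beta$. For this I would invoke Lemma \ref{lem:straight-out}: for any end $e \in P$ that is an endpoint of $\alpha$ (respectively $\beta$), choose a flare-surface neighborhood $U$ of $e$ meeting $\alpha$ (respectively $\beta$) exactly once, so the arc enters $U$ along a single transverse arc. Outside the union of such neighborhoods we obtain compact subarc pieces terminating on circles $\partial U$, and inside a sufficiently large finite-type exhaustion level all intersections of interest become finite, allowing the classical unicorn surgery at a chosen intersection point. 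The main obstacle will be showing that the resulting ``truncated unicorn'' is well-defined as an element of $\vA_2(\Sigma, P)$ and that the combinatorial slimness estimates of HPW survive the infinite-intersection case.

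For part (1), connectedness, I would argue by induction on a \emph{reduced intersection number} of $\alpha$ and $\beta$, counted only within a fixed finite-type window adapted to the truncations above. If this number is zero, $\alpha, \beta$ are disjoint off the window and can be pushed apart using arcs routed through a third end (here the hypothesis $|P| \geq 3$ enters, providing a neutral end for intermediate vertices when $\alpha, \beta$ do not share an endpoint). Otherwise, a unicorn produced at a well-chosen intersection point has strictly smaller reduced intersection number with one of $\alpha, \beta$ and serves as the next vertex.

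For part (2), I would verify the standard Bowditch/Masur--Schleimer slimness axioms for the family of unicorn paths: every vertex on the unicorn path from $\alpha$ to $\beta$ lies within distance $1$ of the corresponding subpath for any unicorn truncation, and unicorn paths between adjacent vertices have uniformly bounded Hausdorff distance. These are combinatorial statements formally identical to the HPW arguments once the unicorn construction is set up, so the resulting hyperbolicity constant $\delta$ depends only on the universal slimness constants and not on $\Sigma$ or $P$. For part (3), I would fix two ends $p, q \in P$, pick a finite-type subsurface $S$ containing an arc $\alpha$ from $p$ to $q$ (regarding the two ends as punctures of $S$), and use a partial pseudo-Anosov $\phi$ supported on $S$. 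A subsurface projection argument then shows that $\{\phi^n \cdot \alpha\}_n$ has unbounded diameter in $\vA_2(\Sigma, P)$.

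For part (4), unbounded orbits follow from part (3) together with mapping class group invariance of $P$. For continuity, if every end of $P$ is a puncture then any arc $\alpha \in \vA_2(\Sigma, P)$ is fixed (as a homotopy class) by the pointwise stabilizer of a suitable finite curve system --- for instance, a curve parallel to $\alpha$ together with curves bounding once-punctured disks around the endpoints of $\alpha$ --- which is open in $\MCG(\Sigma)$ by definition of the topology; hence the action is continuous. Conversely, if some $e \in P$ is not isolated, I would construct $\phi_n \to \mathrm{id}$ in $\MCG(\Sigma)$ that moves a given arc $\alpha$ ending at $e$: since $e$ accumulates other ends, one can choose mapping classes supported in ever shrinking neighborhoods of $e$ that fix any prescribed finite curve system but permute nearby ends so as to change the homotopy class of $\alpha$.
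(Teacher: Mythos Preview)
Your overall strategy---unicorn paths plus the guessing-geodesics lemma for hyperbolicity, a compactly supported pseudo-Anosov for infinite diameter---matches the paper's. The execution differs in one place and has a gap in another.

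For connectedness and hyperbolicity the paper does \emph{not} truncate to a finite-type window. A unicorn of $\alpha,\beta$ is defined directly as $a\cup b$ with $a\subset\alpha$, $b\subset\beta$ meeting only at a corner $p\in\alpha\cap\beta$; this makes sense even when $|\alpha\cap\beta|=\infty$. The set $\vU_2(\alpha,\beta)$ of two-ended unicorns may fail to span a connected subgraph (corners can escape to an end), but its $1$-neighbourhood always does: once the corners $p_k$ accumulate on an end $e$, the unicorn $x_k$ agrees with $\beta$ outside a small flare at $e$, and since $|P|\geq 3$ one routes an arc through a third end disjoint from both $x_k$ and $\beta$. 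One then applies the guessing-geodesics lemma with $A(\alpha,\beta)=N_1(\vU_2(\alpha,\beta))$ and constant $M=3$. This avoids the issues your truncation scheme would face (well-foundedness of the ``reduced intersection number'', and extending truncated unicorns back to genuine elements of $\vA_2(\Sigma,P)$).

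Your discontinuity argument has a genuine gap. ``Not a puncture'' means not isolated-and-planar; an end $e\in P$ can be \emph{isolated and nonplanar} (e.g.\ the surface with three ends, all nonplanar), in which case there are no nearby ends to permute and your sequence $\phi_n$ does not exist. The paper handles both cases uniformly: if $e$ is not a puncture then no finite curve system $A$ fills a neighbourhood of $e$, so there is an essential curve $b$ disjoint from every curve in $A$ but meeting $\alpha$, and the Dehn twist $T_b\in U_A$ moves $\alpha$. A smaller point: for infinite diameter the paper takes a finite-type subsurface $X$ whose boundary separates \emph{all} of $P$, not just two chosen ends; this is what makes ``intersect with $X$'' a well-defined $1$-Lipschitz retraction $\vA_2(\Sigma,P)\to\vA_2(X,\partial_P X)$ and hence gives the quasi-isometric embedding.
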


The theorem is proved over Proposition \ref{prop:discontinuous-action}, Corollary \ref{cor:connected}, Lemma \ref{lem:gglapplies}, and Lemma \ref{lem:unbounded}.

\begin{rmk}
Note that requiring $\Sigma$ to be stable is equivalent to requiring $\Sigma$ to satisfy the hypotheses of Theorem \ref{thm:stable-omnipresent} and Proposition \ref{prop:finitefends}, i.e.\ asking that infinite-orbit and maximal ends are stable. This is because, as observed in Remark \ref{rmk:fends-are-maximal}, finite-orbit ends are maximal.
\end{rmk}

Throughout this section, given a finite collection of ends $P$ and a properly embedded subsurface $X$, we say that a boundary curve of $X$ \emph{corresponds to $p\in P$} if it is separating and cuts off a flare surface $Y$ with $\Ends(Y)\cap P=\{p\}$. We say that $X$ has \emph{boundary separating $P$} if its boundary components are separating curves and there is a subset $\partial_P X=\{\gamma_p\st p\in P\}$ of $|P|$ boundary components of $X$ such that $\gamma_p$ corresponds to $p$ for every $p$.

\subsection{(Dis)continuity of the action}
The first result we prove is that in general the action of the mapping class group on $\vA_2(\Sigma,P)$ (where $P$ is mapping class group invariant) is not continuous.

\begin{prop}\label{prop:discontinuous-action}
Let $\Sigma$ be a surface and $P$ a mapping class group invariant collection of ends of size at least three. Then the action of $\MCG(\Sigma)$ on $\vA_2(\Sigma,P)$ is continuous if and only if all ends in $P$ are punctures.
\end{prop}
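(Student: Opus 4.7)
The plan is to prove both directions by analysing when the stabiliser $\operatorname{Stab}(\alpha)$ is open in $\MCG(\Sigma)$, noting that by definition of the topology this is equivalent to $\operatorname{Stab}(\alpha)$ containing $U_A$ for some finite collection $A$ of essential simple closed curves.

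For the ``if'' direction, fix $\alpha \in \vA_2(\Sigma,P)$ with endpoints $p, q$, both punctures. Since $p$ and $q$ are isolated planar ends, a closed regular neighbourhood of $\alpha$ together with small disks around $p$ and $q$ produces a twice-punctured disk $D_\alpha$ bounded by a single essential simple closed curve $\gamma$. I would take $A = \{\gamma\}$ and verify $U_A \subset \operatorname{Stab}(\alpha)$: in the setting of interest the two components of $\Sigma \sm \gamma$ are non-homeomorphic, since $\Sigma \sm D_\alpha$ inherits all remaining ends of $\Sigma$, so any $\psi \in U_{\{\gamma\}}$ can be isotoped to fix $\gamma$ setwise and must then preserve $D_\alpha$. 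A direct Schoenflies-type argument shows that a twice-punctured disk admits a unique isotopy class of simple arcs joining its two punctures, so $\psi(\alpha) = \alpha$.

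For the ``only if'' direction, suppose some $p \in P$ is not a puncture, choose $q \in P \sm \{p\}$ (possible since $|P| \geq 3$), and let $\alpha$ be any arc from $p$ to $q$. I will show $\operatorname{Stab}(\alpha)$ is not open by producing, for each finite collection $A = \{\gamma_1, \dots, \gamma_n\}$ of essential simple closed curves, an element $\psi \in U_A \sm \operatorname{Stab}(\alpha)$. By Lemma \ref{lem:straight-out} one can pick a flare surface $V$ defining $p$ with $i(\alpha, \partial V) = 1$ and small enough to be disjoint from every $\gamma_i$; since $p$ is not a puncture, every clopen neighbourhood of $p$ contains either more than one end or positive genus, so we can further choose a flare $V' \subset V$ with $p \in {V'}^*$, with $V'$ not a once-punctured disk, and with ${V'}^* \cap P = \{p\}$. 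Then $\delta := \partial V'$ is an essential simple closed curve, since neither complementary region of $\Sigma \sm \delta$ is a disk or once-punctured disk. Since $\alpha$ joins $q \notin {V'}^*$ to $p \in {V'}^*$, it must cross $\delta$, giving $i(\alpha, \delta) \geq 1$. The Dehn twist $T_\delta$ then lies in $U_A$ (being supported inside $V$, which is disjoint from every $\gamma_i$) but moves $\alpha$ to a non-isotopic arc.

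The main subtlety in the ``if'' direction is ensuring side-preservation: one must verify that $\psi \in U_{\{\gamma\}}$ cannot swap the two sides of $\gamma$, which relies on a topological asymmetry between the two components of $\Sigma \sm \gamma$, and this is what forces us to use the fact that the complement of $D_\alpha$ retains ends beyond $\{p,q\}$. In the ``only if'' direction, the main work is extracting a sufficiently small flare $V'$ with essential boundary, which is exactly what the non-puncture hypothesis on $p$ supplies.
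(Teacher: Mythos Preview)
Your approach mirrors the paper's closely: both argue the ``only if'' direction by producing a Dehn twist about an essential curve near the non-puncture end that lies in every $U_A$ yet moves $\alpha$, and both argue the ``if'' direction by exhibiting a finite curve set $A$ with $U_A \subset \operatorname{Stab}(\alpha)$. The paper takes $A$ so that $\alpha$ lies in the finite-type subsurface filled by $A$, whereas you opt for the single curve $\gamma = \partial D_\alpha$.

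There is one small gap in your side-preservation step. You assert that the two sides of $\gamma$ are non-homeomorphic because ``$\Sigma \sm D_\alpha$ retains ends beyond $\{p,q\}$,'' but retaining other ends does not rule out $\Sigma \sm D_\alpha$ being itself a twice-punctured disk: when $\Sigma$ is the four-punctured sphere, both sides of $\gamma$ are twice-punctured disks and there is indeed a $\psi \in U_{\{\gamma\}}$ swapping them, so $U_{\{\gamma\}} \not\subset \operatorname{Stab}(\alpha)$. This is easily repaired---either note that for finite-type $\Sigma$ the mapping class group carries the discrete topology so continuity is automatic, or enlarge $A$ (for instance by adding a curve cutting off a third puncture) to break the symmetry, which is closer in spirit to the paper's choice of $A$.
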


\begin{proof}
Denote by $F$ the action $\MCG(\Sigma)\times \vA_2(\Sigma,P)\to \vA_2(\Sigma,P)$.

Suppose first that there is an end $e\in P$ which is not a puncture. Let $\alpha\in\vA_2(\Sigma,P)$ with $e\in\partial \alpha$ and consider $F^{-1}(\alpha)$. To show that the action is not continuous, it is enough to show that this is not an open set, i.e.\ that there is a point in $F^{-1}(\alpha)$ which doesn't have any open neighbourhood contained in $F^{-1}(\alpha)$. The point we will consider is $(\id,\alpha)$. If $F^{-1}(\alpha)$ contains an open neighbourhood of $(\id,\alpha)$, then there is a finite set $A$ of homotopy classes of essential simple closed curves such that $U_A\times\{\alpha\}\subset F^{-1}(\alpha)$. We will show that no such set is contained in the preimage of $\alpha$.

Let $A$ be any finite set of homotopy classes of essential simple closed curves. By the assumption on $e$, $\alpha$  is not contained in the subsurface filled by $A$. So there is an essential simple closed curve $b$ disjoint from all curves in $A$ and intersecting $\alpha$ nontrivially. Then the Dehn twist $T_b$ around $b$ is in $U_A$ but $(T_b,\alpha)\notin F^{-1}(\alpha)$.

Conversely, suppose all ends in $P$ are punctures and let $\alpha\in\vA_2(\Sigma,P)$. Then there is a finite collection of curves $A$ such that $\alpha$ is contained in the finite-type subsurface spanned by $A$. Let $(\phi,\beta)\in F^{-1}(\alpha)$. Then $\phi\cdot U_{\phi^-1(A)}\times \{\beta\}$ is an open neighborhood of $(\phi,\beta)$ contained in $F^{-1}(\alpha)$.
\end{proof}

We note that the cardinality of the set of vertices of our graph is also determined by whether $P$ contains only punctures or not:
\begin{lemma}\label{lem:uncountable}
The set of vertices of $\vA_2(\Sigma,P)$ is countable if all ends of $P$ are punctures. Otherwise it has the cardinality of the continuum.
\end{lemma}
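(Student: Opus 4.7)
The plan is to handle the two cases of the dichotomy separately.

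Suppose first that every end of $P$ is a puncture. The surface $\Sigma$ is a second-countable topological manifold, so it has the homotopy type of a countable CW complex and hence $\pi_1(\Sigma)$ is countable. Given distinct punctures $p,q\in P$, homotopy classes of paths from $p$ to $q$ rel endpoints form a torsor over $\pi_1(\Sigma,p)$ and thus constitute a countable set. By Epstein's theorem, homotopic simple arcs in a surface are isotopic, so isotopy classes of simple arcs from $p$ to $q$ inject into the countable set of homotopy classes. Since $P$ is finite, the vertex set of $\vA_2(\Sigma,P)$ is a finite union of countable sets, hence countable.

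Now suppose some end $e\in P$ is not a puncture, so that $e$ is either non-isolated or non-planar. The upper bound is immediate: $\Sigma$ has cardinality $\mathfrak{c}$ and is second countable, so there are at most $\mathfrak{c}$ continuous maps $[0,1]\to\Sigma$ and hence at most $\mathfrak{c}$ isotopy classes of arcs. For the lower bound, pick any $f\in P\sm\{e\}$ and, using Lemma \ref{lem:straight-out}, a descending chain of flare neighborhoods $V_1\supset V_2\supset\cdots$ of $e$ together with an arc $\alpha_0$ from $f$ to $e$ meeting each $\partial V_n$ transversely in a single point. After passing to a subsequence of the $V_n$, one can arrange that each annular piece $A_n:=V_n\sm V_{n+1}$ contains a pair of pants $P_n$ whose boundary is $\partial V_n\cup\partial V_{n+1}\cup\beta_n$, for some essential simple closed curve $\beta_n$: if $e$ is non-isolated, choose $V_{n+1}$ so that $A_n$ contains another end of $\Sigma$ and let $\beta_n$ enclose that end; if $e$ is non-planar, choose $V_{n+1}$ so that $A_n$ contains a handle and let $\beta_n$ go around that handle. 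After isotoping $\alpha_0$ within $A_n$ into $P_n$, for each $k=(k_n)\in\ZZ^{\NN}$ modify $\alpha_0$ inside each $P_n$ by replacing the segment with one that winds $k_n$ times around $\beta_n$; since the modifications take place in pairwise disjoint compact regions, this produces a well-defined simple proper arc $\alpha_k$ from $f$ to $e$.

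The main obstacle is injectivity of $k\mapsto[\alpha_k]$, which is more delicate than in the finite-type case because arcs in $\vA_2(\Sigma,P)$ may have infinite pairwise intersection and their endpoints sit at ends rather than at points of $\Sigma$. The plan is to invoke the classical fact that arcs in a pair of pants between two fixed boundary components are classified up to isotopy rel endpoints by a $\ZZ$-valued winding number around the third boundary, detectable via geometric intersection number with a suitable auxiliary arc inside the pair of pants. If $k\neq k'$, pick $n$ with $k_n\neq k_n'$; any ambient isotopy from $\alpha_k$ to $\alpha_{k'}$ in $\Sigma$ may be adjusted, by a standard transversality argument, to remain transverse to $\partial V_n\cup\partial V_{n+1}$ throughout, and hence restricts near $P_n$ to an isotopy between the two pair-of-pants segments with different winding numbers, which is impossible. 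This gives $|\ZZ^{\NN}|=\mathfrak{c}$ distinct vertices, completing the argument.
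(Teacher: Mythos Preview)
Your argument for the countable case is correct, though strictly speaking $p$ and $q$ are ends rather than points of $\Sigma$, so to make sense of ``$\pi_1(\Sigma,p)$'' and ``paths from $p$ to $q$'' one should first fill in those two punctures (the resulting surface is still second countable, so its fundamental group is still countable). The paper instead exhausts $\Sigma$ by finite-type subsurfaces and observes that every arc between punctures is trapped in one of them; both routes are short and valid.

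For the uncountable case your construction is in the same spirit as the paper's. The paper phrases it more efficiently: pick pairwise disjoint essential simple closed curves $\gamma_n$ each meeting a fixed arc $\alpha$, and form $\bigl(\prod_n T_{\gamma_n}^{k_n}\bigr)(\alpha)$ for $(k_n)\in\ZZ^{\NN}$. Your local ``winding'' modifications in each $P_n$ amount to applying such a multitwist.

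The genuine gap is your injectivity argument. Making an ambient isotopy generic (transverse to $\partial V_n\cup\partial V_{n+1}$ at every time) does \emph{not} make it restrict to an isotopy of a single pair-of-pants segment: one-parameter transversality still allows intersection points with $\partial V_n$ to be created and cancelled in pairs, so at intermediate times the arc may meet $A_n$ in several components, or leave $P_n$ across $\beta_n$ altogether --- there is no well-defined ``segment in $P_n$'' to follow through the isotopy. A related issue is that simple arcs in a pair of pants joining two given boundary circles are all isotopic once the endpoints are allowed to slide, so the $\ZZ$-valued invariant you invoke is only defined rel \emph{fixed} endpoints on $\partial V_n,\partial V_{n+1}$, something a global isotopy in $\Sigma$ has no reason to respect. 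The clean fix is to use a global isotopy invariant instead of a local restriction: either compute $i(\alpha_k,\beta_n)$ directly (if your winding is arranged so that this genuinely depends on $k_n$), or, as the paper does, recognise the construction as an infinite multitwist about curves $\gamma_n$ with $i(\alpha,\gamma_n)>0$ and invoke the standard fact that such a multitwist moves $\alpha$ off itself whenever some exponent is nonzero.
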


\begin{proof}
Fix an exhaustion by properly embedded finite-type subsurfaces $X_1\supset X_2\supset \dots$ with essential boundary components.

If all ends of $P$ are punctures, any arc is contained in some $X_n$. Since every finite-type surface contains countably many arcs, the result follows.

If instead there is $e\in P$ which is not a puncture, let $\alpha$ be an arc in $\vA_2(\Sigma,P)$ with $e$ as an endpoint. The assumption on $e$ implies that there is a sequence of pairwise disjoint distinct simple closed curves $\gamma_n$, each intersecting $\alpha$. Then for any sequence of integers $k_n$,
$\left(\prod_{n\in\ZZ} T_{\gamma_n}^{k_n}\right)(\alpha)$ is an arc in $
\vA_2(\Sigma,P)$ and two distinct sequences give different arcs (where $T_\gamma$ is the Dehn twist about a curve $\gamma$). So $\vA_2(\Sigma,P)$ is uncountable. To show that its cardinality is not larger than the cardinality of the continuum, note the following:  given any arc, its homotopy class is determined by the intersections with the $X_n$ of a representative in minimal position with the $X_n$. In each $X_n$, the set of homotopy classes of multiarcs, where the homotopies fix the boundary pointwise, has the cardinality of the continuum. 
\end{proof}

\subsection{Unicorns}

In this section we will adapt the unicorn construction from \cite{HPW_Slim} to the setting which allows for sets of arcs to have infinite intersection.

Let $\alpha,\beta$ be two 2-ended arcs which are not disjoint. To construct unicorns, we will always assume that we have chosen representatives of the arcs in minimal position.

A \emph{unicorn in $\alpha$ and $\beta$} is an arc of the form $a\cup b$, where $a$ is the closure of a connected component of $\alpha\sm p$, $b$ is a connected component of $\beta\sm p$ and $p$ is an intersection point of $\alpha$ and $\beta$. We call the point $p$ the \emph{corner} of the unicorn.

Note that for $a\cup b$ to be an arc, $a$ and $b$ can intersect only at $p$. In general, given two subarcs of $\alpha$ and $\beta$ with a common endpoint in $\Sigma$, there can be other intersection points, in which case the subarcs do not define a unicorn.

Denote by $\vU_2(\alpha,\beta)$ the collection of 2-ended unicorns in $\alpha$ and $\beta$ (i.e.\ the unicorns that are 2-ended arcs), together with $\alpha$ and $\beta$ themselves. If $\alpha$ and $\beta$ are disjoint, simply set $\vU_2(\alpha,\beta)=\{\alpha,\beta\}$. We will make use of the following crucial fact; which is the core of Remark 3.2 in \cite{HPW_Slim}, specialised to the case of 2-ended arcs. We include a proof for completeness.

\begin{lemma}\label{lem:crawlingunicorns}
Let $x=a\cup b\in\vU_2(\alpha,\beta)$. Then
\begin{itemize}
    \item if $\beta\sm b$ intersects $a$, there exists $y=a'\cup b'\in\vU_2(\alpha,\beta)$ which is adjacent to $x$, such that $a'\subsetneq a$ and $b'\supsetneq b$. In particular, the corner of $y$ belongs to the interior of $a$.
    \item if $\beta\sm b$ and $a$ are disjoint, then $x$ is disjoint from $\beta$.
\end{itemize}
\end{lemma}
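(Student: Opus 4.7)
The plan is to construct $y$ explicitly for the first statement by ``walking along $\beta$ past the first intersection'', and to perform a small regular-neighbourhood push-off for the second. The key observation for part~(1) is that even when $\alpha\cap\beta$ is infinite, a ``first'' intersection of $\beta\sm b$ with $a$ exists. Indeed, since $\alpha$ and $\beta$ meet transversely at $p$ by minimality of their intersection, a neighbourhood of $p$ in $a$ contains no other point of $\beta$, so $a\cap(\beta\sm b)$ is a closed subset of $a$ that is bounded away from $p$. Parameterising $a$ outward from $p$ and taking the infimum of parameters hitting $\beta\sm b$ then yields a point $q\in a\sm\{p\}$, which moreover lies in the interior of $a$ since it is an interior intersection point of $\alpha$ and $\beta$.

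Next I would let $a'\subset a$ be the subarc from $q$ to the endpoint of $a$ distinct from $p$, let $\beta_{pq}\subset\beta\sm b$ be the subarc from $p$ to $q$, and set $b':=b\cup\beta_{pq}$, so that $a'\subsetneq a$ and $b'\supsetneq b$ hold automatically. To see that $y:=a'\cup b'$ is an embedded arc with corner $q$, note that $a\cap b=\{p\}$ forces $a'\cap b=\emptyset$ (since $p\notin a'$), while the choice of $q$ gives $\beta_{pq}\cap a=\{q\}$ and hence $a'\cap\beta_{pq}=\{q\}$; combined with $b\cap\beta_{pq}=\{p\}$ this yields $a'\cap b'=\{q\}$. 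The endpoints of $y$ are the non-corner endpoints of $a$ and $b$, which are the two distinct ends in $\partial x$, so $y\in\vU_2(\alpha,\beta)$. For adjacency of $x$ and $y$ in the arc graph, I observe that $x\cap y=a'\sqcup b$ and that $\beta_{pq}$ meets $x$ only at its endpoints $p$ and $q$; a standard regular-neighbourhood argument then produces a representative of $y$ disjoint from $x$ by pushing the shared subarcs $a'$ and $b$ off $x$ consistently to one side.

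For the second statement, the hypothesis gives $a\cap\beta\subseteq\{p\}$, so $x=a\cup b$ meets $\beta$ precisely along $b$. A sufficiently small regular neighbourhood of $b$ disjoint from $\beta\sm b$ and from $a\sm\{p\}$ then lets me push $b$ to one side, producing a representative of $x$ disjoint from $\beta$. The only place the potential infinitude of $\alpha\cap\beta$ intervenes is the existence of the minimising $q$, which is what makes the lemma nontrivial compared to the finite-intersection version in \cite{HPW_Slim}; once $q$ is in hand, the remainder is a direct adaptation of the standard unicorn construction.
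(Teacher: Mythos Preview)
Your argument for part~(2) is fine, and your careful existence argument for a ``first'' intersection point (discreteness of $\alpha\cap\beta$, closedness, bounded away from $p$, infimum achieved) is more explicit than the paper's.  However, there is a genuine error in part~(1): you parameterise along the wrong arc.

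You take $q$ to be the first point of $a\cap(\beta\sm b)$ when walking along $a$ outward from $p$.  What this choice gives you is that the segment of $a$ between $p$ and $q$ meets $\beta\sm b$ only at $q$; it tells you nothing about how $\beta_{pq}$ meets $a'$.  In particular, your claim ``the choice of $q$ gives $\beta_{pq}\cap a=\{q\}$'' does not follow (and is already false since $p\in\beta_{pq}\cap a$).  Concretely, $\beta_{pq}$ may cross $a'$ many times before reaching $q$: imagine $\beta\sm b$ leaving $p$, first hitting $a$ at a point $q_1$ far out along $a$, then returning to hit $a$ again at a closer point $q_2$.  Your $q$ is $q_2$, but then $\beta_{pq_2}$ passes through $q_1\in a'$, so $a'\cap b'\supsetneq\{q\}$ and $y$ is not embedded.

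The paper instead takes $q$ to be the first intersection with $a$ that occurs \emph{following $\beta\sm b$ from $p$}.  With this choice the interior of $\beta_{pq}$ is genuinely disjoint from $a$, so $a'\cap b'=\{q\}$ and everything you wrote afterwards (embeddedness, adjacency via pushing off the shared subarcs) goes through.  Your existence argument transfers verbatim to this parameterisation of $\beta\sm b$, so the fix is a one-word change.
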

\begin{proof}
It is clear that if $\beta\sm b$ and $a$ are disjoint, then so are $x$ and $\beta$. So let us assume that $\beta\sm b$ and $a$ are not disjoint. Let $p$ be the corner of $x$ and $q$ the first intersection with $a$ that occurs following $\beta\sm b$ from $p$. Then we can set $a'$ to be the subarc of $a$ from the endpoint at infinity to $q$ and $b'$ to be the union of $b$ with the segment of $\beta\sm b$ between $p$ and $q$. By construction, this defines a unicorn $y$ with the required properties.  
\end{proof}
We are interested in the following consequence:
\begin{cor}\label{cor:unicornpath}
If $x=a\cup b\in\vU_2(\alpha,\beta)$ and $|(\beta\sm b)\cap a|<\infty$, then there is a path in $\vU_2(\alpha,\beta)$ between $x$ and $\beta$.
\end{cor}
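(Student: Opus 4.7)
The plan is to induct on the finite quantity $n := |(\beta \sm b) \cap a|$, using Lemma \ref{lem:crawlingunicorns} as the inductive engine. The base case is $n=0$: by the second bullet of Lemma \ref{lem:crawlingunicorns}, $x$ and $\beta$ are disjoint, so there is an edge between $x$ and $\beta$ in $\vU_2(\alpha,\beta)$, giving a path of length at most one.

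For the inductive step, suppose $n \geq 1$ and that the statement holds for all elements of $\vU_2(\alpha,\beta)$ whose analogous intersection count is strictly smaller than $n$. Applying the first bullet of Lemma \ref{lem:crawlingunicorns}, we obtain an adjacent unicorn $y = a' \cup b' \in \vU_2(\alpha,\beta)$ with corner $q$ lying in the interior of $a$, where $a' \subsetneq a$ is the subarc of $a$ from its end at infinity to $q$, and $b' = b \cup [p,q]_\beta$, with $[p,q]_\beta$ denoting the subarc of $\beta \sm b$ between the old corner $p$ and the new corner $q$.

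The key verification is that $|(\beta \sm b') \cap a'| < n$. The intersections of $\beta \sm b'$ with $a'$ are a subset of the intersections of $\beta \sm b$ with $a$: namely, we discard all intersection points lying on the removed subsegment $[p,q]_\beta$ of $\beta$, and we also discard the point $q$ itself, which is now a corner of $y$ rather than an interior intersection. Since $q$ was one of the points of $(\beta \sm b) \cap a$, we lose at least this single point, so the intersection count drops by at least one. (In fact we may lose more, both from shrinking $a$ to $a'$ and from consuming a segment of $\beta \sm b$ into $b'$, but one is enough.) By the inductive hypothesis applied to $y$, there is a path in $\vU_2(\alpha,\beta)$ from $y$ to $\beta$, and prepending the edge $\{x,y\}$ yields the required path from $x$ to $\beta$.

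The main thing to be careful about is the bookkeeping of intersections under the move $x \mapsto y$; the construction in Lemma \ref{lem:crawlingunicorns} was arranged precisely so that the new corner lies in the interior of $a$, which is what guarantees that the intersection count is a strictly decreasing nonnegative integer and that the induction terminates. No further geometric input is needed beyond Lemma \ref{lem:crawlingunicorns}.
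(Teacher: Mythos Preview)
Your proof is correct and is exactly the argument the paper leaves implicit: the corollary is stated without proof as an immediate consequence of Lemma~\ref{lem:crawlingunicorns}, and the induction on $|(\beta\sm b)\cap a|$ that you spell out is the natural way to make this precise. The key observation that the new corner $q$ lies in $(\beta\sm b)\cap a$ but is excluded from $(\beta\sm b')\cap a'$, forcing a strict decrease, is correctly identified and justified.
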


In particular this implies that if $\alpha$ and $\beta$ have finite intersection, then $\vU_2(\alpha,\beta)$ yields a connected subgraph. However, our arcs can intersect infinitely many times, in which case $\vU_2(\alpha,\beta)$ might not give a connected subgraph (see Figure \ref{fig:unicornpaths}).

\begin{figure}
\centering
\begin{overpic}{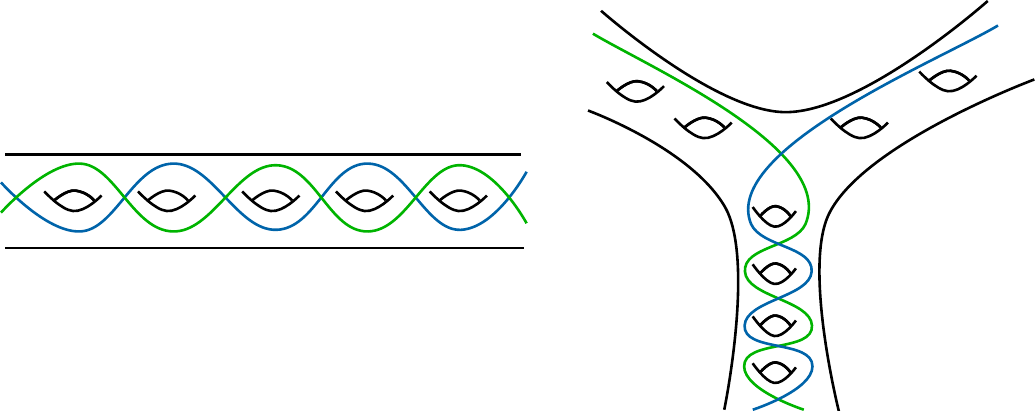}
\put(52,23){$\alpha$}
\put(52,17){$\beta$}
\put(54,36){$\gamma$}
\put(98,37){$\delta$}
\end{overpic}
\caption{The graph spanned by $\vU_2(\alpha,\beta)$ is disconnected, while that spanned by $\vU_2(\gamma,\delta)$ is connected}
\label{fig:unicornpaths}
\end{figure}

On the other hand, $\vU_2(\alpha,\beta)$ is never reduced to only $\alpha$ and $\beta$, provided that $\alpha$ and $\beta$ intersect.

\begin{lemma}\label{lem:unicornsexist}
Let $\alpha,\beta$ be non-disjoint 2-ended arcs.  Then $\vU_2(\alpha,\beta)$  contains  an arc $x\neq \alpha,\beta$.
\end{lemma}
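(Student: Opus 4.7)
The plan is to place $\alpha,\beta$ in minimal position, parametrise them, and then locate an intersection point with an extremal property that directly yields a 2-ended unicorn distinct from $\alpha$ and $\beta$. Parametrise $\alpha\colon(0,1)\to\Sigma$ so that $\alpha(s)\to e_1$ as $s\to 0$ and $\alpha(s)\to e_2$ as $s\to 1$, and similarly $\beta$ with endpoints $f_1,f_2$ at $t=0,1$. After possibly reversing the orientation of $\beta$, we may arrange that whenever $e_i=f_j$ we have $i=j$, i.e.\ any shared endpoint occurs at the same parameter on both arcs. Each intersection $p\in\alpha\cap\beta$ then has coordinates $(s(p),t(p))\in(0,1)^2$. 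Because $\alpha,\beta$ are proper and transverse, a sequence of intersections can accumulate only at a common end of $\alpha$ and $\beta$, and under our orientation convention this forces any accumulation point in $[0,1]^2$ to lie in $\{(0,0),(1,1)\}$.

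The crucial combinatorial step is to find an intersection $p^*$ with no \emph{northwest partner}: no $q\in\alpha\cap\beta$ satisfies both $s(q)<s(p^*)$ and $t(q)>t(p^*)$. Suppose to the contrary that every intersection admits a northwest partner. Starting from any $p_0\in\alpha\cap\beta$, inductively pick $p_{n+1}$ to be a northwest partner of $p_n$. The resulting sequence $(s(p_n),t(p_n))$ lies in $[0,s(p_0)]\times[t(p_0),1]$ with $s$ strictly decreasing and $t$ strictly increasing, so it converges in $[0,1]^2$ to a point which is an accumulation point of $\alpha\cap\beta$. But this limit cannot be $(0,0)$ since $t\geq t(p_0)>0$, nor $(1,1)$ since $s\leq s(p_0)<1$, a contradiction.

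Fix such $p^*$ and let $a_1$ be the subarc of $\alpha$ joining $e_1$ to $p^*$ and $b_2$ the subarc of $\beta$ joining $p^*$ to $f_2$. Any point of $a_1\cap b_2$ other than $p^*$ would be a northwest partner of $p^*$, so $a_1\cap b_2=\{p^*\}$ and $x:=a_1\cup b_2$ is a unicorn. The orientation convention guarantees $e_1\neq f_2$ (either $e_1=f_1\neq f_2$, or $e_1\notin\{f_1,f_2\}$), so $x$ is 2-ended and $x\in\vU_2(\alpha,\beta)$. If the endpoint sets $\{e_1,f_2\}$, $\{e_1,e_2\}$ and $\{f_1,f_2\}$ are pairwise distinct, then $x\neq\alpha,\beta$ already at the level of endpoints. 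Otherwise $x$ shares its endpoint set with $\alpha$ or $\beta$; in that case $x\simeq\alpha$ rel endpoints would force $b_2\simeq a_2$, and $x\simeq\beta$ would force $a_1\simeq b_1$, either of which exhibits a bigon (possibly with one vertex at an end of $\Sigma$) between $\alpha$ and $\beta$, contradicting minimal position.

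The main obstacle I anticipate is this last homotopy-distinction step: when $x$ has the same endpoints as $\alpha$ or $\beta$, ruling out $x\simeq\alpha$ or $x\simeq\beta$ uses the bigon criterion for proper arcs in the presence of a vertex at an end of $\Sigma$, which is the natural extension of minimal position to the infinite-type setting but warrants an explicit justification.
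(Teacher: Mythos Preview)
Your construction of a 2-ended unicorn is correct and rests on the same observation as the paper's proof, though your packaging is more elaborate. The paper simply picks any intersection point $p$ and half-arcs $a\subset\alpha$, $b\subset\beta$ going to \emph{distinct} ends $e_\alpha\neq e_\beta$; since intersections can only accumulate at a common end, $|a\cap b|<\infty$, so one may take the first intersection along $a$ as the corner and be done. Your northwest-partner argument is a reorganisation of exactly this finiteness: an infinite northwest chain would accumulate away from $\{(0,0),(1,1)\}$, impossible precisely because $a_1$ and $b_2$ go to distinct ends. So both proofs hinge on the same fact, but the paper reaches it in one line rather than via the monotone-sequence device.

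Regarding the step you flag: the paper does not argue $x\not\simeq\alpha,\beta$ at all, and on reflection it is not needed. The only use of the lemma is to know that there is a unicorn \emph{with a corner}, i.e.\ an element of $\vU_2(\alpha,\beta)$ other than the two special members $\alpha,\beta$ added by fiat; the subsequent path-building in the connectivity lemma works entirely from the combinatorics of the corner and is indifferent to whether the resulting arc happens to be homotopic to $\alpha$ or $\beta$. So you can drop the bigon-at-an-end argument---which, as you rightly note, would require care for non-puncture ends---without loss.
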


\begin{proof}
Let $p$ be any intersection point of $\alpha$ and $\beta$ and let $a \subset \alpha$ and $b \subset \beta$ with endpoint $p$ such that the other two endpoints of $a,b$ are distinct ends $e_\alpha,e_\beta$.  If $a \cup b$ is an arc we are done. If it is not, then $a$ and $b$ intersect outside of $p$ and since they have different endpoints at infinity they intersect finitely many times. So we can set $q$ to be the first intersection that occurs when travelling from $e_\alpha$ to $p$ following $a$. Define $a'$ to be the subarc of $a$ with endpoints $e_\alpha$ and $q$ and $b'$ the subarc of $\beta$ (containing $b$) with endpoints $e_\beta$ and $q$. By construction $a'\cap b'=\{q\}$ and hence $a'\cup b'$ is a unicorn in $\vU_2(\alpha,\beta)$.
\end{proof}

Using this, we can show that even when $\vU_2(\alpha,\beta)$ does not give a connected subgraph, its 1-neighbourhood (the set of all vertices at distance at most one from some $x\in \vU_2(\alpha,\beta)$, denoted $N_1(\vU_2(\alpha,\beta))$) does.

\begin{lemma}
Suppose $|P|\geq 3$. For any $\alpha,\beta \in \vA_2(\Sigma,P)$, the full subgraph spanned by $N_1(\vU_2(\alpha,\beta))$ is connected.
\end{lemma}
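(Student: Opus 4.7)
The plan is to combine the crawling procedure of Lemma \ref{lem:crawlingunicorns} and Corollary \ref{cor:unicornpath} with a ``bridge'' arc produced from the hypothesis $|P| \geq 3$. The starting point is the observation that transverse intersections of $\alpha$ and $\beta$ in minimal position can accumulate in $\Sigma \cup \Ends(\Sigma)$ only at ends in $\partial\alpha \cap \partial\beta$. Hence for a $2$-ended unicorn $x = a \cup b$ with Freudenthal endpoints $e_\alpha$ for $a$ and $e_\beta$ for $b$, the set $(\beta \setminus b) \cap a$ is infinite only when $e_\alpha$ equals the endpoint of $\beta$ distinct from $e_\beta$; symmetrically for $(\alpha \setminus a) \cap b$. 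Both crawling directions thus fail to terminate only when $\partial\alpha = \partial\beta$.

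In the first case $\partial\alpha \neq \partial\beta$, I would show that $\vU_2(\alpha,\beta)$ itself is connected. Every unicorn has at least one direction of crawling terminating, so by Corollary \ref{cor:unicornpath} each unicorn is joined in $\vU_2(\alpha,\beta)$ to $\alpha$ or $\beta$. Then I would construct, adapting the proof of Lemma \ref{lem:unicornsexist}, a unicorn with $e_\alpha \in \partial\alpha \setminus \partial\beta$ and $e_\beta \in \partial\beta \setminus \partial\alpha$: for such a unicorn both crawlings terminate, so it connects $\alpha$ and $\beta$ in a single component. Since every vertex of $N_1(\vU_2(\alpha,\beta))$ is adjacent to some unicorn, the full subgraph on $N_1(\vU_2(\alpha,\beta))$ is then also connected.

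In the remaining case $\partial\alpha = \partial\beta = \{e,f\}$, I would use $|P|\geq 3$ to pick $g \in P \setminus \{e,f\}$ and build a ``universal bridge'' $\gamma \in \vA_2(\Sigma, P)$ with $g \in \partial\gamma$ and $\gamma \cap (\alpha \cup \beta) = \emptyset$. Since every unicorn is a subset of $\alpha \cup \beta$, such a $\gamma$ is disjoint from every unicorn. In particular $\gamma \in N_1(\vU_2(\alpha,\beta))$ (being adjacent to $\alpha$), and for any $u, v \in N_1(\vU_2(\alpha,\beta))$ with adjacent unicorns $x_u, x_v$, the concatenation $u - x_u - \gamma - x_v - v$ lies in the full subgraph.

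The main obstacle is the construction of $\gamma$. Since $\alpha$ and $\beta$ do not accumulate at $g$, there is a flare surface neighbourhood $V_g$ of $g$ with $V_g \cap (\alpha \cup \beta) = \emptyset$. If $|P \setminus \{e,f\}| \geq 2$, then $V_g$ can be enlarged so $V_g^*$ contains a second end of $P \setminus \{e,f\}$, producing $\gamma$ entirely inside $V_g$. The delicate subcase is $P = \{e,f,g\}$: here $\gamma$ must run from $g$ to $e$ or $f$ avoiding $\alpha\cup\beta$. This reduces immediately when $\alpha \cap \beta$ is finite, since then every crawling terminates and $\vU_2(\alpha,\beta)$ is already connected. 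When $\alpha\cap\beta$ is infinite, accumulation forces at least one of $e, f$ to be nonplanar or non-isolated, which provides additional ``corridors'' in $\Sigma \setminus (\alpha\cup\beta)$ approaching that end; routing $\gamma$ through one such corridor and connecting it to $V_g$ via a path in the complement gives the desired bridge.
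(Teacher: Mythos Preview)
Your treatment of the case $\partial\alpha \neq \partial\beta$ is correct and in fact yields the stronger conclusion that $\vU_2(\alpha,\beta)$ itself is connected, not merely its $1$-neighbourhood. The key observation---that intersections of $\alpha$ and $\beta$ in minimal position accumulate only at ends in $\partial\alpha\cap\partial\beta$, so that for every $2$-ended unicorn at least one of the two crawling directions terminates---is sound, and the construction of a unicorn with ends in $\partial\alpha\setminus\partial\beta$ and $\partial\beta\setminus\partial\alpha$ (for which both crawlings terminate) does place $\alpha$ and $\beta$ in the same component. The paper treats all cases uniformly and does not isolate this cleaner sub-case.

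The genuine gap is in the case $\partial\alpha = \partial\beta = \{e,f\}$: a universal bridge $\gamma\in\vA_2(\Sigma,P)$ disjoint from $\alpha\cup\beta$ need not exist. The end $g$ can lie in a once-punctured-bigon component of $\Sigma\setminus(\alpha\cup\beta)$, bounded by one subarc of $\alpha$ and one subarc of $\beta$ meeting at two intersection points; minimal position forbids only \emph{un}punctured bigons. Every arc leaving such a region crosses $\alpha$ or $\beta$, so there is no arc from $g$ to any other end of $P$ avoiding $\alpha\cup\beta$. This kills both the $|P|\geq 4$ claim that ``$V_g$ can be enlarged to contain a second end of $P\setminus\{e,f\}$'' (which requires $g$ and that second end to lie in the same component of $\Sigma\setminus(\alpha\cup\beta)$) and the $|P|=3$ ``corridor'' argument (the corridors near the accumulating end are inaccessible from $V_g$ if $g$ is trapped in a bounded complementary region). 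The paper does not seek a universal bridge: given $x_1$, it crawls towards $\beta$ until the corner of some $x_k$ lies outside a fixed compact subsurface $S$ with $|\partial S\cap\beta|=2$, so that $x_k$ and $\beta$ coincide on $S$; it then finds an arc $c\subset S$ disjoint from $S\cap\beta=S\cap x_k$, joining the boundary component corresponding to $e$ to one corresponding to a third end of $P$, and extends it to $\gamma$ disjoint from $\beta$ and from this particular $x_k$. Crucially, this $\gamma$ is allowed to meet $\alpha$ and every other unicorn---it is chosen anew for each $x_1$, not once and for all. That per-unicorn (rather than universal) bridge is the missing idea.
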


\begin{proof}
If $\alpha$ and $\beta$ are disjoint, the result is clear. So we can assume that $\alpha$ and $\beta$ intersect and hence, by Lemma \ref{lem:unicornsexist}, $\vU_2(\alpha,\beta)\supsetneq \{\alpha,\beta\}$. It suffices then to show that given any unicorn $x_1 = a_1 \cup b_1\neq \alpha,\beta$ we can find a path from $x_1$ to $\beta$ that is contained in $N_1(\vU_2(\alpha,\beta))$. Since the definition of 2-ended unicorn is symmetric with respect to $\alpha$ and $\beta$, this will show that there is a path to $\alpha$ as well.

Let $p_1$ be the corner of $x_1$ and $b_0$ be the other half of $\beta$, that is, $\beta = b_0 \cup b_1$.  If $|a_1\cap b_0|<\infty$, Corollary \ref{cor:unicornpath} allows us to conclude. If $|a_1 \cap b_0| = \infty$, by Lemma \ref{lem:crawlingunicorns} we obtain an infinite sequence of unicorns $x_k=a_k\cup b_k$ with corners $p_k$ such that $a_k\subsetneq a_{k-1}$ and $x_k$ is disjoint from $x_{k-1}$ for every $k\geq 2$.  Moreover, since $|a_1 \cap b_0| = \infty$, we have for any $k$ that $\partial x_k=\partial \beta= \{e, e'\}$, where $e$ is the endpoint at infinity of $a_1$ and as the arcs are proper, $a_1 \cap b_0$ is discrete and hence $p_k\to e$. Let $S$ be a compact subsurface with boundary corresponding to $P$ and such that $|\partial S\cap\beta|=2$. As the corners $p_k$ of the unicorns converge to $e$, there is an index $k$ such that $p_k\in\Sigma\sm S$. If $X$ is the flare surface in the complement of $S$ containing $e$, $(\Sigma\sm X)\cap x_k=(\Sigma\sm X)\cap\beta$. Furthermore $S\cap\beta=S\cap x_k$ is a single arc and thus there exists another arc $c$ in $S$ disjoint from $\beta$, connecting boundary components corresponding to $e$ and $f$, where $f\in P\sm \{e, e'\}$. We can then extend $c$ to an arc $\gamma\in\vA_2(\Sigma,P)$ with $\gamma\cap S=c$ and disjoint from both $\beta$ and $x_k$. In particular $\gamma\in N_1(\vU_2(\alpha,\beta))$ and $x_1,\dots, x_k,\gamma,\beta$ is a path as required.
\end{proof}

A direct consequence of the lemma is the connectivity of the graph we are interested in:
\begin{cor}\label{cor:connected}
If $|P|\geq 3$ then $\vA_2(\Sigma,P)$ is connected.
\end{cor}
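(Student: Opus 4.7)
The plan is to deduce this corollary immediately from the preceding lemma. Fixing two arbitrary vertices $\alpha,\beta\in\vA_2(\Sigma,P)$, I would observe that, by the convention adopted when defining $\vU_2(\alpha,\beta)$, the arcs $\alpha$ and $\beta$ are themselves elements of $\vU_2(\alpha,\beta)$, and hence in particular of $N_1(\vU_2(\alpha,\beta))$. The previous lemma (whose hypotheses $|P|\geq 3$ and $\alpha,\beta\in\vA_2(\Sigma,P)$ are satisfied) then produces a path from $\alpha$ to $\beta$ inside the full subgraph spanned by $N_1(\vU_2(\alpha,\beta))$, which is a subgraph of $\vA_2(\Sigma,P)$. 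Since $\alpha$ and $\beta$ were arbitrary, this gives connectivity.

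The only small verification worth recording is that every vertex appearing in such a path really is a vertex of $\vA_2(\Sigma,P)$. Vertices of $\vU_2(\alpha,\beta)$ are $2$-ended by construction and their endpoints lie in $\partial\alpha\cup\partial\beta\subseteq P$; the auxiliary arcs $\gamma$ used in the bridging step of the preceding lemma were explicitly constructed to have both endpoints in $P$ (which is precisely where the hypothesis $|P|\geq 3$ is needed, to secure a third end $f\in P\setminus\{e,e'\}$). So every vertex of $N_1(\vU_2(\alpha,\beta))$ is in $\vA_2(\Sigma,P)$, and the path from the lemma is an honest path in $\vA_2(\Sigma,P)$.

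The main obstacle, therefore, is not in the corollary but was already overcome in the preceding lemma: namely, the fact that $\vU_2(\alpha,\beta)$ itself need not span a connected subgraph when $\alpha$ and $\beta$ intersect infinitely many times, and that one must pass to the $1$-neighbourhood and use the bridging trick of the lemma to cross the gap. Once that lemma is in hand, the present corollary is a formal consequence.
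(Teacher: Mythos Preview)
Your proposal is correct and matches the paper's approach: the paper states the corollary as ``a direct consequence of the lemma'' with no further argument, and your deduction---$\alpha,\beta\in\vU_2(\alpha,\beta)\subset N_1(\vU_2(\alpha,\beta))$, which spans a connected subgraph of $\vA_2(\Sigma,P)$---is exactly that direct consequence. The verification in your second paragraph is harmless but unnecessary, since $N_1(\vU_2(\alpha,\beta))$ is by definition the $1$-neighbourhood taken inside $\vA_2(\Sigma,P)$, so its vertices lie in $\vA_2(\Sigma,P)$ automatically.
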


\subsection{Uniform hyperbolicity}\label{sec:hyperbolicity}
The goal of this section is to prove uniform hyperbolicity of $\vA_2(\Sigma,P)$, by applying the ``guessing geodesics lemma'' (\cite[Prop.\ 3.1]{Bowditch_Uniform},
\cite[Theorem 3.15]{MS_Geometry}).

\begin{lemma}[Guessing geodesics lemma]\label{lem:ggl}
Let $X$ be a graph. Suppose that for every pair of vertices $x$ and $y$ there is an associated connected subgraph $A(x,y)$ containing $x$ and $y$. If there is $M>0$ such that:
\begin{enumerate}
\item if $x$ and $y$ have distance at most one, then $A(x,y)$ has diameter at most $M$, and
\item for every $x,y,z$, $A(x,y)$ is contained in the $M$-neighbourhood of $A(x,z)\cup A(y,z)$,
\end{enumerate}
then $X$ is $\delta$-hyperbolic, where $\delta$ depends only on $M$.
\end{lemma}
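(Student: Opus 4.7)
The statement is a classical tool of Bowditch and Masur--Schleimer; the authors almost certainly cite it as a black box, but if I were writing out a proof I would use the now-standard two-stage strategy: first show that real geodesics fellow-travel the sets $A(x,y)$, and then deduce thin triangles from condition (2).

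\smallskip

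\noindent\textbf{Step 1: geodesics fellow-travel the $A$-sets.} The plan is to prove that there exists $K=K(M)$ such that for every $x,y \in X$ and every geodesic $\gamma$ from $x$ to $y$, $\gamma$ is contained in the $K$-neighbourhood of $A(x,y)$. I would argue by an extremal principle: let $w \in \gamma$ realise the maximum distance $R$ from $A(x,y)$ along $\gamma$, and let $w^-,w^+$ be its neighbours on $\gamma$. Condition (2) applied to the triple $(x,y,w)$ gives $A(x,y) \subseteq N_M(A(x,w) \cup A(w,y))$, while condition (1) applied to the adjacent pairs $(w,w^{\pm})$ controls the diameter of the ``corner'' $A$-sets meeting $w$. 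Iterating condition (2) along the subgeodesics $[x,w]$ and $[w,y]$ (using that each point strictly between $x$ and $y$ is closer to $x$ or $y$ than $w$ is) lets one transfer the bound on short $A$-sets into a bound on $R$ in terms of $M$. This extremal/maximum-principle dance is the technical heart of the argument.

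\smallskip

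\noindent\textbf{Step 2: slim triangles.} Once Step 1 is established, consider any geodesic triangle with sides $[x,y],[y,z],[z,x]$. By Step 1 each side lies in $N_K$ of the corresponding $A$-set, and by condition (2) we have $A(x,y) \subseteq N_M(A(x,z) \cup A(y,z))$. Composing these inclusions yields
\[
[x,y] \;\subseteq\; N_{2K+M}\bigl([x,z] \cup [y,z]\bigr),
\]
so geodesic triangles are $(2K+M)$-slim. This is Gromov's thin-triangles condition, and hence $X$ is $\delta$-hyperbolic with $\delta$ depending only on $M$.

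\smallskip

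\noindent\textbf{Main obstacle.} The hard part is Step 1. The $A(x,y)$ are just connected subgraphs, not geodesics, so there is no metric coarse-Lipschitz parameterisation to exploit directly; the only quantitative control is condition (1) on one-step $A$-sets and the slim-triangle condition (2) for $A$-triangles. Bootstrapping these into a uniform stability bound for actual geodesics requires iterating condition (2) carefully along a geodesic while preventing the error from compounding. Once this Morse-type lemma is in place, Step 2 and the hyperbolicity conclusion are essentially formal, and the uniformity in $\Sigma$ and $P$ that is needed for the intended application to $\vA_2(\Sigma,P)$ comes for free, since the resulting $\delta$ depends only on the constant $M$ produced by the verification of (1) and (2).
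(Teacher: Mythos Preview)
You are right that the paper does not prove this lemma: it is quoted from Bowditch and Masur--Schleimer and used as a black box, so as far as matching the paper there is nothing further to compare.

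Your sketch, however, has a directional gap in Step~2. Step~1 asserts $[x,y]\subseteq N_K(A(x,y))$, i.e.\ geodesics lie near $A$-sets. In Step~2 you chain $[x,y]\subseteq N_K(A(x,y))\subseteq N_{K+M}\bigl(A(x,z)\cup A(y,z)\bigr)$, but to land back on the geodesic sides you then need $A(x,z)\subseteq N_K([x,z])$ and $A(y,z)\subseteq N_K([y,z])$, which is the \emph{opposite} inclusion from what Step~1 provides; without it the constant $2K+M$ is unjustified, and a priori the $A$-sets could be much larger than geodesics. The standard proofs in fact attack the other direction first: a dyadic subdivision of a geodesic $x=x_0,\dots,x_n=y$ together with iterated applications of~(2) and then~(1) shows $A(x,y)$ lies in an $O(M\log d(x,y))$-neighbourhood of $[x,y]$. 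This logarithmic (not uniform) bound is then fed into a hyperbolicity criterion such as a subquadratic isoperimetric inequality or a thin-bigon condition; only after hyperbolicity is established does Morse stability upgrade everything to uniform constants. So the genuinely hard step is the inclusion $A(x,y)\subseteq N_?([x,y])$, and your extremal argument in Step~1, even if it could be made to work, is aimed at the wrong containment for the deduction you want in Step~2.
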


We will show that the lemma applies to our situation, with 
$$A(\alpha,\beta)=N_1(\vU_2(\alpha,\beta)).$$

\begin{lemma}\label{lem:gglapplies}
Assume $|P|\geq 3$. Then the $A(\alpha,\beta)$ defined above yield connected subgraphs such that:
\begin{enumerate}
    \item if $\alpha$ and $\beta$ have distance at most one, then $A(x,y)$ has diameter at most $3$, and
    \item for every $\alpha,\beta,\gamma$, $A(\alpha,\beta)$ is contained in the $3$-neighbourhood of $A(\alpha,\gamma)\cup A(\beta,\gamma)$.
\end{enumerate}
In particular, there is a uniform constant $\delta$ (independent of $\Sigma$ or $P$) such that if $|P|\geq 3$ then $\vA_2(\Sigma,P)$ is $\delta$-hyperbolic.
\end{lemma}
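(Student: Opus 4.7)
\emph{Plan.} I apply the guessing geodesics lemma (Lemma \ref{lem:ggl}) with the subgraphs $A(\alpha,\beta)=N_1(\vU_2(\alpha,\beta))$ and constant $M=3$; this yields a uniform hyperbolicity constant $\delta$ depending only on $M=3$ and hence on neither $\Sigma$ nor $P$. Connectedness of each $A(\alpha,\beta)$ has just been established in the preceding lemma.

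For condition (1), if $d(\alpha,\beta)\leq 1$ then $\vU_2(\alpha,\beta)\subseteq\{\alpha,\beta\}$ by definition, so $A(\alpha,\beta)=N_1(\{\alpha,\beta\})$ has diameter at most $1+d(\alpha,\beta)+1\leq 3$ by the triangle inequality.

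For condition (2), the main step is the following.

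\emph{Key claim.} For every $x\in\vU_2(\alpha,\beta)$ there exists $y\in\vU_2(\alpha,\gamma)\cup\vU_2(\beta,\gamma)$ with $d(x,y)\leq 1$.

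Granting the claim, any $v\in A(\alpha,\beta)$ is within distance $1$ of some $x\in\vU_2(\alpha,\beta)$, which is within distance $1$ of $\vU_2(\alpha,\gamma)\cup\vU_2(\beta,\gamma)\subseteq A(\alpha,\gamma)\cup A(\beta,\gamma)$, yielding $d(v,A(\alpha,\gamma)\cup A(\beta,\gamma))\leq 2<3$, as required.

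The proof of the key claim adapts the unicorn-triangle argument of \cite{HPW_Slim}. Fix minimal-position representatives and write $x=a\cup b$ with corner $p$. If $\gamma$ is disjoint from $x$, simply take $y=\gamma$. Otherwise, up to swapping the roles of $\alpha$ and $\beta$, $\gamma$ meets $a\subseteq\alpha$. I would then pick an intersection point $q\in\gamma\cap\alpha$ which is extremal, specifically the first intersection encountered while traveling along $\gamma$ from a suitably chosen endpoint $e_\gamma\in\partial\gamma$, and set $y:=\gamma_q\cup\alpha_q$, where $\gamma_q$ is the subarc of $\gamma$ from $e_\gamma$ to $q$ and $\alpha_q$ is one of the two subarcs of $\alpha$ based at $q$. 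The extremality of $q$ guarantees $\gamma_q\cap\alpha=\{q\}$, so $y$ is a unicorn in $\vU_2(\alpha,\gamma)$; the choices of $e_\gamma$ and of $\alpha_q$ (which of the two halves of $\alpha$ to use) are made so that $y$ is disjoint from $x$. If no choice on the $\alpha$-side produces a disjoint unicorn, the symmetric argument with $\beta$ in place of $\alpha$ yields a $y\in\vU_2(\beta,\gamma)$ disjoint from $x$.

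The principal obstacle is the infinite-intersection regime. In the setting of \cite{HPW_Slim} the arcs intersect finitely, so first and last intersections are globally well-defined; in our setting any two of $\alpha,\beta,\gamma$ may intersect infinitely, and extremal intersections are only guaranteed to exist inside compact subsurfaces. The strategy will be to perform the HPW selection inside a large enough compact subsurface containing the corner $p$ and a substantial portion of $a$, $b$, and $\gamma$, and then to control the ends of the resulting unicorn $y$ using the local end structure exploited in Lemmas \ref{lem:straight-out} and \ref{lem:arcbtw2ends}, together with properness (which forces intersection points to accumulate only at ends). Verifying that $y$ is indeed disjoint from $x$ in a neighborhood of each relevant end, in the cases where $\gamma$ accumulates on an end of $x$, is the technical core of the argument.
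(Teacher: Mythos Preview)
Your overall scaffold (apply Lemma~\ref{lem:ggl} with $A(\alpha,\beta)=N_1(\vU_2(\alpha,\beta))$ and $M=3$) matches the paper, and your verification of condition~(1) is fine. The gap is in condition~(2): your ``Key claim'' as stated is both too strong and not actually proved.

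First, the paper does \emph{not} show that every $x\in\vU_2(\alpha,\beta)$ is within distance~$1$ of $\vU_2(\alpha,\gamma)\cup\vU_2(\beta,\gamma)$; it only shows that $x$ is within distance~$2$ of $A(\alpha,\gamma)\cup A(\beta,\gamma)$, and in one case the nearby point is an auxiliary arc $\delta\in N_1(\gamma)$ rather than a unicorn built from $\alpha,\beta,\gamma$. This is why the constant is $3$ and not $2$. Your claim may well be false in the problematic case below.

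Second, your plan to ``pick the first intersection of $\gamma$ with $\alpha$ from a suitably chosen endpoint $e_\gamma$'' breaks down precisely when $\partial\gamma=\partial x$. In that situation both endpoints of $\gamma$ are endpoints of $x$, and since $\gamma$ and $x$ may intersect infinitely often near each of these shared ends, there need not be any first intersection from either side; your compact-subsurface workaround is not made precise and it is unclear how it would produce a global unicorn disjoint from $x$. The paper avoids this entirely by a clean dichotomy: if $\partial\gamma\neq\partial x$, choose $e\in\partial\gamma\setminus\partial x$; then properness forces $\gamma\cap x$ to have a first point from $e$ (since $x$ cannot accumulate on $e$), and the resulting unicorn $y\in\vU_2(\alpha,\gamma)\cup\vU_2(\beta,\gamma)$ is disjoint from $x$. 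If $\partial\gamma=\partial x$, one first picks any $\delta$ disjoint from $\gamma$ with $\partial\delta\neq\partial\gamma$ (this uses $|P|\ge3$), applies the previous case with $\delta$ in place of $\gamma$ to get $x_2$ disjoint from $x$, and then applies it once more to $x_2$ and $\gamma$; the output lies in $A(\alpha,\gamma)$ because $\delta\in N_1(\gamma)$. This two-step detour is the missing idea in your proposal.
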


\begin{proof}
Connectivity of the subgraphs is established in Corollary \ref{cor:connected}.
If $\alpha$ and $\beta$ are disjoint, $A(\alpha,\beta)=N_1(\alpha)\cup N_1(\beta)$ and hence has diameter at most three.
The proof of the fact that the subgraphs form thin triangles is essentially the same as the proof of Lemma 5.7 in \cite{DFV_Big}, but we include it for the sake of completeness. 

Fix now $\alpha,\beta$ and $\gamma$ and let $x\in A(\alpha,\beta)$. What we need to show is that there is $y\in A(\alpha,\gamma)\cup A(\beta,\gamma)$ with $d(x,y)\leq3$.
By definition, $x$ is at distance at most $1$ from a unicorn $x_1=a\cup b\in\vU_2(\alpha,\beta)$ with corner $p_1$. If $\gamma\cap x_1=\emptyset$, we can set $y=\gamma$, so assume that $x_1$ and $\gamma$ are not disjoint.
\begin{description}
\item[Case 1] $\partial\gamma\neq \partial x_1$. Let then $e$ be an end of $\gamma$ and not of $x_1$. Follow $\gamma$ from $e$ to the first intersection $p_2$ with $x_1$, which belongs to either $a$ or $b$ -- without loss of generality, assume $p_2\in a$. Set $a'$ to be the subarc of $a$ with same endpoint at infinity and ending at $p_2$ and let $c$ be the subarc of $\gamma$ from $e$ to $p_2$. By construction $y=a'\cup c$ is a unicorn in $\vU_2(\alpha,\gamma)\subset A(\alpha,\gamma)$ disjoint from  $x_1$.
\item[Case 2] $\partial \gamma=\partial x_1$. Let then $\delta\in\vA_2(\Sigma,P)$ disjoint from $\gamma$ and with $\partial\delta\neq\partial \gamma$. Let $e_\delta\in\partial\delta\sm\partial\gamma$. So $\partial\delta\neq \partial x_1$ and as in Case 1 we find $x_2\in\vU_2(\alpha,\delta)\cup \vU_2(\beta,\delta)$ disjoint from $x_1$ and such that $e_\delta\in\partial x_2$. Without loss of generality, assume $x_2\in\vU_2(\alpha,\delta)$. Note that $\partial x_2\neq\partial\gamma$, because $e_\delta\in\partial x_2\sm\partial \gamma$. We can thus apply Case 1 again to find $y\in\vU_2(\alpha,\gamma)\cup\vU_2(\delta,\gamma)\subset A(\alpha,\gamma)$ which is disjoint from $x_2$, which implies that $d(x,y)\leq 3$. \hfill \qedhere
\end{description}
\end{proof}

\subsection{Infinite diameter}\label{sec:unbounded}
When $P$ is mapping class group invariant, $\MCG(\Sigma)$ acts on $\vA_2(\Sigma,P)$. To show $\MCG(\Sigma)$ acts with unbounded orbits, we will find embedded infinite-diameter subgraphs on which certain mapping classes act with unbounded orbits. For this, we slighly extend the definition of our graph to the case of surfaces with boundary: if $Q$ is a collection of boundary components of a surface $X$, we denote by $\vA_2(X,Q)$ the subgraph of the arc graph of $X$ spanned by arcs joining two distinct boundary components of $Q$.

\begin{lemma}\label{lem:qi-emb} Suppose $|P|\geq 3$ and let $X$ be a finite-type properly embedded subsurface  of $\Sigma$ with boundary separating $P$. Then there is a quasi-isometric embedding of $\vA_2(X, \partial_P X)$ into $\vA_2(\Sigma,P)$.
\end{lemma}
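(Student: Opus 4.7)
The plan is to construct a natural embedding $\Phi\colon \vA_2(X,\partial_P X)\to \vA_2(\Sigma,P)$ by extending each arc in $X$ out to the appropriate ends of $\Sigma$, and to show it is a quasi-isometric embedding by producing a coarsely Lipschitz inverse defined by intersection with $X$. Each $\gamma_p\in \partial_P X$ cuts off a flare subsurface $Y_p\subset \Sigma\setminus X$ whose only end in $P$ is $p$, so fix once and for all a properly embedded arc $\delta_p\subset Y_p$ from a point $x_p\in \gamma_p$ to the end $p$. Given $\alpha\in \vA_2(X,\partial_P X)$ with $\partial\alpha=\{\gamma_p,\gamma_q\}$, take a representative with endpoints $x_p,x_q$ and set $\Phi(\alpha):=[\delta_p\cup \alpha\cup \delta_q]$; this is independent of the choice of representative, since any two such are isotopic in $X$ rel endpoints.

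Define a set-valued coarse map $\pi\colon \vA_2(\Sigma,P)\to \vA_2(X,\partial_P X)$ by sending $\gamma$ with $\partial\gamma=\{p,q\}$, placed in minimal position with $\partial X$, to the set of components of $\gamma\cap X$ joining $\gamma_p$ and $\gamma_q$. This set is nonempty because $\gamma$ must pass from $Y_p$ through $X$ to $Y_q$. If $\gamma,\gamma'$ are disjoint in $\Sigma$, then any elements of $\pi(\gamma)$ and $\pi(\gamma')$ are disjoint subsets of $X$, hence at distance at most one in $\vA_2(X,\partial_P X)$. Moreover, the concrete representative $\delta_p\cup \alpha\cup \delta_q$ of $\Phi(\alpha)$ is already in minimal position with $\partial X$ (meeting $\gamma_p,\gamma_q$ once each and no other boundary component), so $\Phi(\alpha)\cap X=\alpha$ and one may take $\alpha\in \pi(\Phi(\alpha))$.

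For the Lipschitz direction, suppose $\alpha,\beta$ are adjacent in $\vA_2(X,\partial_P X)$ and pick disjoint representatives $\alpha',\beta'$ in $X$, with endpoints at distinct points on each $\gamma_r$ they touch. For each such $r$, build in $Y_r$ two disjoint properly embedded arcs isotopic to $\delta_r$ with endpoints matching those of $\alpha',\beta'$ on $\gamma_r$; concatenating with $\alpha',\beta'$ yields disjoint representatives of $\Phi(\alpha),\Phi(\beta)$, showing that $\Phi$ is $1$-Lipschitz. Combining this with the coarse inverse: for any geodesic $\Phi(\alpha)=\gamma_0,\dots,\gamma_n=\Phi(\beta)$ in $\vA_2(\Sigma,P)$, choosing $a_i\in \pi(\gamma_i)$ with $a_0=\alpha$ and $a_n=\beta$ produces a path of length $n$ in $\vA_2(X,\partial_P X)$, whence $d_X(\alpha,\beta)\le d_\Sigma(\Phi(\alpha),\Phi(\beta))\le d_X(\alpha,\beta)$; in fact $\Phi$ is an isometric embedding.

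The main obstacle is constructing, inside the flare subsurface $Y_r$, two disjoint arcs with prescribed endpoints on $\gamma_r$ that are both isotopic to $\delta_r$. On a finite-type surface this is straightforward; here $Y_r$ may have infinite genus and many additional ends, so one uses that a properly embedded arc to an end has a regular neighborhood supporting a disjoint parallel copy, combined with a sliding argument inside a collar of $\gamma_r$ to route the endpoints of the parallel copies to the prescribed positions without introducing intersections.
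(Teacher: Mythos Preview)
Your overall strategy matches the paper's: extend arcs in $X$ by fixed rays to the ends and use ``intersect with $X$'' as a coarse retraction. The paper's map $E$ differs cosmetically (it glues along a boundary segment rather than fixing basepoints $x_p$) and only claims $d(E(\alpha),E(\beta))\le 3\,d_X(\alpha,\beta)$, not that $E$ is $1$-Lipschitz; this lets the paper avoid the framing issue you allude to in your last paragraph. Your claimed isometric embedding is plausible, but making the parallel copies $\delta_r^{\alpha},\delta_r^{\beta}$ simultaneously disjoint \emph{and} compatible with the sliding of the endpoints of $\alpha',\beta'$ (so that the concatenations really lie in the classes $\Phi(\alpha),\Phi(\beta)$) needs more than you have written.

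There is, however, a genuine gap in your coarse inverse. You define $\pi(\gamma)$ to be the set of components of $\gamma\cap X$ joining $\gamma_p$ to $\gamma_q$ when $\partial\gamma=\{p,q\}$, and assert it is nonempty because ``$\gamma$ must pass from $Y_p$ through $X$ to $Y_q$''. But $\gamma$ can detour through some other $Y_r$ (or through a complementary flare not indexed by $P$): for instance, with $P=\{p,q,r\}$ and $X$ a sphere with boundary $\gamma_p,\gamma_q,\gamma_r$, an arc that goes $Y_p\to X\to Y_r\to X\to Y_q$ in minimal position (the piece in $Y_r$ is essential whenever $Y_r$ has extra ends or genus) has components $(\gamma_p,\gamma_r)$ and $(\gamma_r,\gamma_q)$ in $X$ and \emph{no} component from $\gamma_p$ to $\gamma_q$; your $\pi(\gamma)$ is then empty. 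The paper avoids this by taking $\gamma_i'$ to be any component of $\gamma_i\cap X$ joining two \emph{distinct} elements of $\partial_P X$. With that weaker requirement your argument goes through verbatim (you still have $\alpha\in\pi(\Phi(\alpha))$ and disjointness is preserved), and you recover the quasi-isometric embedding, though not an isometric one.
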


\begin{proof}
Denote by $\gamma_p\in\partial_P X$ the boundary component of $X$ corresponding to $p\in P$. For each $p\in P$, fix an arc $\delta_p\subset \overline{\Sigma\sm X}$ joining $\gamma_p$ to $p$ and intersecting $\gamma_p$ in exactly one point. We define the map $E:\vA_2(X,\partial_P X)\to \vA_2(\Sigma,P)$ as follows: given an arc $\alpha\in\vA_2(X,\partial_P X)$, let $\partial\alpha=\{\gamma_{p_1},\gamma_{p_2}\}$. For $i=1,2$, choose a component $\alpha_{p_i}$ of $\gamma_{p_i}\setminus (\alpha\cup\delta_{p_i})$ joining $\gamma_{p_i}$ to $\delta_{p_i}$ and glue $\alpha_{p_i}$ and $\delta_{p_i}$ to $\alpha$. We obtain an arc $E(\alpha)$ joining $p_1$ to $p_2$. 
\begin{figure}[ht]
\begin{center}
\begin{overpic}{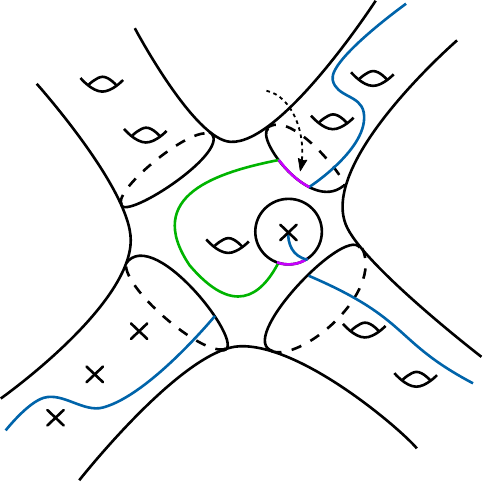}
\put(49,34){$\alpha$}
\put(46,81){$\alpha_p$}
\put(90,100){$p$}
\end{overpic}
\end{center}
\caption{Extending an arc in $\vA_2(X,\partial_P X)$ to an arc in $\vA_2(\Sigma,P)$}
\label{fig:extending}
\end{figure}

We claim that $E$ is a quasi-isometric embedding.  Denote by $d_X$ the distance $\vA_2(X,\partial_P X)$.  
Since the set of all possible extensions of an arc has diameter two and disjoint arcs of $\vA_2(X,\partial_P X)$ always admit disjoint extensions, if $\alpha,\beta\in\vA_2(X,\partial_P X)$ are disjoint, then $d( E(\alpha) , E(\beta) )\leq 3$ in $\vA_2(\Sigma,P)$. Hence for any $\alpha,\beta\in\vA_2(X,\partial_P X)$ we have
$$ d (E(\alpha), E(\beta))\leq 3 d_X(\alpha,\beta).$$

For the other inequality, let $E(\alpha)=\gamma_0,\gamma_1,\dots,\gamma_k=E(\beta)$ be a path in $\vA_2(\Sigma,P)$. Since $X$ has boundary separating $P$, for every $i$ the intersection $\gamma_i\cap X$ contains an arc $\gamma_i'$ joining two distinct boundary components in $\partial_P X$. So $\alpha=\gamma_0',\gamma_1',\dots,\gamma_k'=\beta$ induces a path in $\vA_2(X,\partial_P X)$.
This holds for every path in $\vA_2(\Sigma,P)$, so
$$d_X(\alpha,\beta) \leq d (E(\alpha),E(\beta)),$$
completing the proof.
\end{proof}

\begin{lemma}\label{lem:unbounded}
If $|P|\geq 3$ and $\Sigma$ is not the thrice-punctured sphere, then $\vA_2(\Sigma,P)$ has infinite diameter. If $P$ is $\MCG(\Sigma)$-invariant, then $\MCG(\Sigma)$ acts on $\vA_2(\Sigma,P)$ with unbounded orbits.
\end{lemma}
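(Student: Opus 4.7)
My plan is to use the quasi-isometric embedding from Lemma \ref{lem:qi-emb} to reduce both statements to an analogous assertion on a finite-type subsurface. I would first pick a properly embedded finite-type subsurface $X \subset \Sigma$ with boundary separating $P$ and with $\chi(X) \leq -2$ (equivalently, $X$ not a pair of pants). The hypothesis that $\Sigma$ is not the thrice-punctured sphere is exactly what allows this choice: if $\Sigma$ has infinite type, any sufficiently large member of a finite-type exhaustion whose boundary separates $P$ works; if $\Sigma$ has finite type, then $\Sigma$ has either positive genus, an end not in $P$, or $|P| \geq 4$, and in each case we can enlarge a pair of pants supporting $\partial_P X$ by an extra handle or an extra boundary component.

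Since $\chi(X) \leq -2$, there exists a pseudo-Anosov $\phi \in \MCG(X)$ fixing $\partial X$ pointwise (for instance, a suitable composition of Dehn twists along filling curves in the interior of $X$), and any pseudo-Anosov of a finite-type surface acts on the arc graph with positive asymptotic translation length. Fix any $\alpha \in \vA_2(X, \partial_P X)$; since $\phi$ preserves $\partial_P X$, the orbit $\{\phi^n(\alpha)\}_{n \in \NN}$ lies in $\vA_2(X, \partial_P X)$, and distances in this subgraph dominate those in the full arc graph of $X$, so this orbit is unbounded in $\vA_2(X, \partial_P X)$. Applying Lemma \ref{lem:qi-emb}, the image $\{E(\phi^n(\alpha))\}$ is unbounded in $\vA_2(\Sigma, P)$, so $\vA_2(\Sigma, P)$ has infinite diameter.

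For the action statement, extend $\phi$ by the identity across $\overline{\Sigma \sm X}$ to obtain $\tilde\phi \in \MCG(\Sigma)$; since $P$ is $\MCG(\Sigma)$-invariant, the mapping class group acts on $\vA_2(\Sigma, P)$, and in any case $\tilde\phi$ fixes each $p \in P$ and hence preserves $\vA_2(\Sigma, P)$. The decisive observation is that, by construction, $\tilde\phi$ fixes each curve $\gamma_p \in \partial_P X$ and each connecting arc $\delta_p$ pointwise, so $\tilde\phi^n(E(\alpha))$ and $E(\phi^n(\alpha))$ are two extensions of the same arc $\phi^n(\alpha)$ differing only in the choice of component of $\gamma_p \sm (\phi^n(\alpha) \cup \delta_p)$; any two such extensions lie at distance at most $2$ in $\vA_2(\Sigma, P)$. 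The triangle inequality then yields
\[
d_{\vA_2(\Sigma,P)}(\tilde\phi^n(E(\alpha)), E(\alpha)) \;\geq\; d_{\vA_2(\Sigma,P)}(E(\phi^n(\alpha)), E(\alpha)) - 2 \;\longrightarrow\; \infty,
\]
so the $\MCG(\Sigma)$-orbit of $E(\alpha)$ under $\tilde\phi$ is unbounded. The main subtlety, resolved by this uniform bound on extensions of a fixed arc, is that the embedding $E$ from Lemma \ref{lem:qi-emb} is not $\MCG(\Sigma)$-equivariant on the nose.
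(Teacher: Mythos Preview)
Your proof is correct and follows essentially the same approach as the paper: choose a finite-type subsurface $X$ with boundary separating $P$ that supports a pseudo-Anosov, and push the unbounded orbit through the quasi-isometric embedding of Lemma \ref{lem:qi-emb}. You are more explicit than the paper about the non-equivariance of $E$, handling it via the diameter-two bound on extensions of a fixed arc; the paper instead leaves this implicit, since the restriction argument in Lemma \ref{lem:qi-emb} already gives $d_{\vA_2(\Sigma,P)}(E(\alpha),\tilde\phi^n E(\alpha))\geq d_X(\alpha,\phi^n\alpha)$ directly (as $\tilde\phi^n E(\alpha)\cap X=\phi^n\alpha$).
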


\begin{proof}
Choose a subsurface $X$ with boundary separating $P$: if $|P|\geq 4$, we can choose $X$ to be a $|P|$-holed sphere. If $|P|=3$, $\Sigma$ has positive genus or $\Ends(\Sigma)\geq 4$. In the first case, we can choose $X$ to be a a $3$-holed torus  and in the second case a $4$-holed sphere.

Pick an arc $\alpha_0\in\vA_2(X,\partial_P X)$; by applying a pseudo-Anosov of $X$ fixing the boundary components of $X$ pointwise we obtain a sequence of arcs $\alpha_k\in\vA_2(X,\partial_P X)$ whose distance from $\alpha_0$ goes to infinity in $\vA(X)$, thus in $\vA_2(X,\partial_P X)$. As a consequence, the diameter of $\vA_2(X,\partial_P X)$ is infinite and there are mapping classes of $X$ acting with unbounded orbits. By Lemma \ref{lem:qi-emb}, $\vA_2(X,\partial_P X)$ is quasi-isometrically embedded in $\vA_2(\Sigma,P)$, so the diameter of $\vA_2(\Sigma,P)$ is infinite as well.

If $P$ is invariant under the action of the mapping class group, $\MCG(\Sigma)$ acts on $\vA_2(\Sigma,P)$ and the compactly supported elements described above have unbounded orbits.
\end{proof}

\bibliographystyle{alpha}
\bibliography{references}
\end{document}